\documentclass[12pt]{amsart}
\usepackage{geometry}   
\usepackage[colorlinks,citecolor = red, linkcolor=blue,hyperindex]{hyperref}
\usepackage{euscript,eufrak,verbatim, mathrsfs, amscd}
\usepackage[psamsfonts]{amssymb}
\usepackage{bbm}
\usepackage{graphicx}
 \usepackage{float}
\usepackage{float, tikz}

 \usepackage[all, cmtip]{xy}

\usepackage{upref, xcolor, dsfont}
\usepackage{amsfonts,amsmath,amstext,amsbsy, amsopn,amsthm}
\usepackage{enumerate}

\usepackage{url}

\usepackage{mathtools}
\usepackage{bookmark}

 \usepackage{euscript}
\usepackage{helvet}         
\usepackage{courier}        
\usepackage{type1cm}        
\usepackage{multicol}        
\usepackage[bottom]{footmisc}

\newtheorem{theorem}{Theorem}[section]
\newtheorem*{theorem*}{Theorem B}
\newtheorem{lemma}[theorem]{Lemma}

\newtheorem{proposition}[theorem]{Proposition}

\newtheorem*{definition*}{Definition}
\newtheorem*{remark*}{Remark}

\newtheorem*{observation*}{Observation}

\newtheorem*{assumption*}{Assumption}

\theoremstyle{definition}

\theoremstyle{remark}
\newtheorem{remark}{Remark}[section]

\geometry{left=2.8cm,right=2.8cm,top=2.5cm,bottom=2.5cm}

\begin{document}

\title[Volterra operators]{Volterra type integration operators between weighted Bergman spaces and Hardy spaces}

\author
{Yongjiang Duan}
\address
{Yongjiang DUAN: School of
Mathematics and Statistics,  Northeast Normal University, Changchun, Jilin 130024, P.R.China}
\email{duanyj086@nenu.edu.cn}


\author
{Siyu Wang}
\address
{Siyu WANG: School of
Mathematics and Statistics, Northeast Normal University, Changchun, Jilin 130024, P.R.China}
\email{wangsy696@nenu.edu.cn}

\author{Zipeng Wang}
\address{Zipeng WANG: College of Mathematics and Statistics, Chongqing University, Chongqing,
401331, P.R.China}
\email{zipengwang@cqu.edu.cn}
\begin{abstract}
Let $\mathcal{D}$ be the class of radial weights on the unit disk which satisfy both forward and reverse doubling conditions. Let $g$ be an analytic function on the unit disk $\mathbb{D}$. We characterize bounded and compact Volterra type integration operators
\[
J_{g}(f)(z)=\int_{0}^{z}f(\lambda)g'(\lambda)d\lambda
\]
between weighted Bergman spaces $L_{a}^{p}(\omega )$ induced by $\mathcal{D}$ weights and Hardy spaces $H^{q}$ for $0<p,q<\infty$.
\end{abstract}


\maketitle
\section{Introduction}
A \textit{weight} $\omega$ in this paper is a positive and integrable function on the unit disk $\mathbb{D}$. For $0<p<\infty$, the \textit{weighted Bergman space $L^p_a(\omega)$} is a linear space of analytic functions on $\mathbb{D}$ satisfying
\[
\|f\|_{L^p(\omega)}:=\Big(\int_{\mathbb{D}}|f(z)|^p\omega(z)dA(z)\Big)^{1/p}<\infty,
\]
where $dA(z)=dxdy/\pi$ is the normalized area measure on $\mathbb{D}$.
If $\omega(z)=\omega_\alpha(z):=(1+\alpha)(1-|z|^2)^\alpha,-1<\alpha<\infty$, then $L^p_a(\omega_\alpha)$ is known as standard weighted Bergman space in literatures. Moreover, $L_a^p(\omega_0)$ is the classical Bergman space, and is denoted by $L_a^p$ \cite{zhu2007}. Recall for $0<p<\infty$, the \textit{Hardy space} consists of analytic functions $f$ on $\mathbb{D}$ such that
\[
\|f\|_{H^p}:=\sup_{0\leq r<1}\Big(\frac{1}{2\pi}\int_0^{2\pi}|f(re^{i\theta})|^pd\theta\Big)^{1/p}<\infty.
\]

Let $g$ be an analytic function on $\mathbb{D}$. We consider the \textit{Volterra type integration operator} $J_{g}$ with symbol $g$
\[
J_{g}(f)(z):=\int_{0}^{z}f(\lambda)g'(\lambda)d\lambda,\quad z\in\mathbb{D},
\]
where $f$ is an analytic function on $\mathbb{D}$.

Pommerenke \cite{Po-1977} proved that $J_g$ is bounded on Hardy space $H^2$ if and only if $g$ is of bounded mean oscillation. The same characterization of the boundedness
of $J_g$ on the general Hardy space $H^p$ was obtained by Aleman and Siskakis \cite{A-S-1995} for $p\geq 1$ and by Aleman and Cima \cite{A-C-2001} for $0<p<1$. In \cite{A-C-2001}, Aleman and Cima further considered any pair of $(p,q)\in (0,\infty)\times(0,\infty)$ and studied the symbol $g$ such that $J_g:H^p\to H^q$ is bounded. Analogous characterization of the symbol $g$ for $J_g$ to be bounded on Bergman spaces can be found in \cite{Siskakis-2006}. Aleman and Siskakis \cite{A-S-1997} established the result on a large class of weighted Bergman spaces, which includes the class of standard weights $\omega_\alpha$ as special examples. Pau and Pel\'{a}ez \cite{Pau-2010} described the $L^{p}_a(\omega)$-$L^{q}_a(\omega)$ boundedness of $J_g$ for $0<p,q<\infty$ when $\omega$ is rapidly decreasing, while Pel\'{a}ez and R\"{a}tty\"{a} \cite{PR} got the corresponding characterizations when $\omega$ is regular or rapidly increasing weight.
Wu \cite{Wu-Scichina} established boundedness criterions of $J_g$ from $L_{a}^p(\omega_\alpha)$ to $H^q$ for all $(p,q)\in (0,\infty)\times (0,\infty)$ but $0<q<\min\{2,p\}<\infty$. Very recently, this gap was filled by Miihkinen, Pau, Per\"{a}l\"{a} and Wang in \cite{Pau-2020} using analysis on tent spaces and some other delicate techniques.
Moreover, by modifying the proofs of the boundedness results, one can get the corresponding compactness results (\cite{Siskakis-2006,Wu-Scichina,Pau-2021}).

The goal of the present paper is to study Volterra type integration operators between weighted Bergman spaces $L_a^p(\omega)$ and Hardy spaces $H^q$ for $0<p,q<\infty$. Here the weight $\omega$  is a $\mathcal{D}$ weight. Recall that the class of \textit{$\mathcal{D}$ weights} \cite{PR19} consists of positive integrable radial functions $\omega$ on $\mathbb{D}$ such that $\omega$ satisfies \textit{both forward and reverse doubling conditions}, i.e., there exist constants $C_1,C_2\in(1,\infty)$ and $C_3\in[1,\infty)$,
\[
C_1\widehat{\omega}\bigg(1-\frac{1-r}{C_2}\bigg)\leq \widehat{\omega}(r)\leq C_3\widehat{\omega}\bigg(\frac{1+r}{2}\bigg), \quad r\in [0,1),
\]
where $\widehat{\omega}(z)=\int_{|z|}^{1}\omega(r)dr$ for all $z\in\mathbb{D}$. The class of $\mathcal{D}$ weights can be regarded as the largest class of radial weights $\omega$ in some sense that most of the results over $L_{a}^{p}(\omega_{\alpha})$ can still hold on $L_{a}^{p}(\omega)$. For instance, the Littlewood-Paley estimate holds for weighted Bergman spaces induced by radial weights if and only if $\omega$ is a $\mathcal{D}$ weight \cite{PR19}.

Let $H(\mathbb{D})$ be the set of analytic functions on $\mathbb{D}$. To describe our results, let us introduce two function spaces, which can be regarded as generalized Bloch spaces (little Bloch spaces). For a radial weight $\omega$ and $\gamma\in\mathbb{R}$, let
\[
\mathcal{B}^{\gamma,\omega}=\Big\{f\in H(\mathbb{D}):
\sup\limits_{z\in\mathbb{D}}\frac{(1-|z|^{2})^{\gamma}}{\omega(z)}|f'(z)|<\infty
\Big\}
\] and
\[
\mathcal{B}_{0}^{\gamma,\omega}=\Big\{f\in H(\mathbb{D}):
\lim\limits_{|z|\to1^{-}}\frac{(1-|z|^{2})^{\gamma}}{\omega(z)}|f'(z)|=0
\Big\}.
\]

Let $\mathbb{T}$ be the unit circle. For an interval $I\subset\mathbb{T}$, we define the \textit{Carleson square}
\[
S(I)=\{z\in\mathbb{D}: z/|z|\in I,~1-|I|\leq |z|<1\},
\]
where $|\cdot|$ is the normalized arc measure on $\mathbb{T}$.
For $0<s<\infty$, a positive measure $\mu$ on $\mathbb{D}$ is a \textit{$s$-Carleson measure
(vanishing $s$-Carleson measure)} \cite[Section 9.2]{zhu2007} if
\[
\sup\limits_{I\subseteq\mathbb{T}}\frac{\mu(S(I))}{|I|^{s}}<\infty\quad
\Big(\lim\limits_{|I|\to 0}\frac{\mu(S(I))}{|I|^{s}}=0\Big).
\]
The (vanishing) $1$-Carleson measure is called (vanishing) Carleson measure.
Let $\xi\in\mathbb{T}$ and $0<\alpha<\infty$, the \textit{Stolz~non-tangential~approach~region} is given by
\[
\Gamma_{\alpha}(\xi)=\bigg\{z\in\mathbb{D}:|z-\xi|<(\alpha+1)(1-|z|)\bigg\}.
\]
For convenience, we denote $\Gamma_{1}(\xi)$ by $\Gamma(\xi)$. Let $0<p<\infty$, $0<q\leq\infty$ and $\nu$ be a positive Borel measure. The \textit{tent space} $T_{q,\nu}^{p}$
consists of $\nu$-equivalence classes of $\nu$-measurable functions $f$ on $\mathbb{D}$ such that
\[
\|f\|_{T_{q,\nu}^{p}}^{p}:=\int_{\mathbb{T}}\bigg(\int_{\Gamma_{\alpha}(\xi)}|f(z)|^{q}d\nu(z)\bigg)^{\frac{p}{q}}d\xi
<\infty,\quad q\in(0,\infty)
\]
and
\[
\|f\|_{T_{q,\nu}^{p}}^{p}:=\int_{\mathbb{T}}\bigg(\textmd{ess}\sup_{z\in\Gamma_{\alpha}(\xi)}|f(z)|\bigg)^{p}d\xi
<\infty,\quad q=\infty,
\]
where the essential supremum is taken with respect to $\nu$.
Note that the tent spaces $T_{q,\nu}^{p}$ does not depend on the choice of $\Gamma_{\alpha}$ \cite{Pau-2020}.
We denote $T_{q,\nu}^{p}$ by $T_{q,\omega,\beta}^{p}$, where $d\nu(z)=\Big(\frac{(1-|z|^{2})^{2}}{\omega(z)}\Big)^{\beta}dA(z)$ and $0<\beta<\infty$.

Our main results are the followings. The first two results characterize the boundedness and compactness of Volterra type integration operator
$J_{g}$ from weighted Bergman spaces to Hardy spaces.

\begin{theorem} \label{b-t1} Let $0<p,q<\infty$, $\omega\in\mathcal{D}$ and $g\in H(\mathbb{D})$. The following statements hold:
\begin{description}
\item[(A)] If $0<p\leq \min\{2,q\}$ or $2<p<q<\infty$, then $J_{g}:L_{a}^{p}(\omega)\to H^{q}$ is bounded if and only if
$
g\in\mathcal{B}^{\gamma,\widehat{\omega}^{\frac{1}{p}}},
$
where $\gamma=1+\frac{1}{q}-\frac{1}{p}$;
\item[(B)] If  $2<p=q<\infty$, then $J_{g}:L_{a}^{p}(\omega)\to H^{q}$ is bounded if and only if the positive measure
$
\mu_{g}
$
is a Carleson measure, where $d\mu_{g}(z)=|g'(z)|^{\frac{2p}{p-2}}\frac{(1-|z|^{2})^{\frac{p+2}{p-2}}}{\widehat{\omega}(z)^{\frac{2}{p-2}}}dA(z)$;
\item[(C)] If $0<q<\infty$ and $p>\max\{2,q\}$, then
$J_{g}:L_{a}^{p}(\omega)\to H^{q}$ is bounded if and only if
$g'\in T_{\frac{2p}{p-2},\widehat{\omega},\frac{2}{p-2}}^{\frac{pq}{p-q}}$;
\item[(D)] If $0<q<p\leq2$, then $J_{g}:L_{a}^{p}(\omega)\to H^{q}$ is bounded if and only if
$B_{g,\omega}\in L^{\frac{pq}{p-q}}(\mathbb{T})$, where
$
B_{g,\omega}(\xi)=\sup\limits_{z\in\Gamma(\xi)}|g'(z)|\frac{(1-|z|^{2})}{\widehat{\omega}(z)^{\frac{1}{p}}},~\xi\in\mathbb{T}.
$
\end{description}
\end{theorem}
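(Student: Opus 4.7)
The plan is to convert the $H^q$-norm of $J_g f$ into an area-integral expression and then reduce each of the four cases to an embedding between $L_a^p(\omega)$ and a tent/area space. Since $(J_g f)(0)=0$ and $(J_g f)'=fg'$, the Littlewood-Paley/area-function description of the Hardy space yields
\begin{equation*}
\|J_g f\|_{H^q}^q\asymp\int_{\mathbb{T}}\!\bigg(\int_{\Gamma(\xi)}|f(z)|^2|g'(z)|^2\,dA(z)\bigg)^{q/2}d\xi,
\end{equation*}
so the boundedness of $J_g:L_a^p(\omega)\to H^q$ is equivalent to the boundedness of the multiplier $f\mapsto fg'$ from $L_a^p(\omega)$ into the tent space with area measure and indices $(q,2)$. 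For necessity in every case I would test $J_g$ on a normalized family $f_a(z)=(1-\overline{a}z)^{-c}/\|(1-\overline{a}\,\cdot)^{-c}\|_{L_a^p(\omega)}$, $a\in\mathbb{D}$, with $c$ chosen according to the known asymptotics of these norms for $\omega\in\mathcal{D}$ (Pel\'{a}ez-R\"{a}tty\"{a}); the corresponding lower bound for $\|J_gf_a\|_{H^q}$ then forces the required condition on $g$.

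For case (A), the sufficiency proof would insert the pointwise Bloch-type bound $|g'(z)|\lesssim\widehat\omega(z)^{1/p}(1-|z|^2)^{-\gamma}$ into the area integral above and identify the result as the $L_a^p(\omega)$-equivalent norm coming from the $\mathcal{D}$-weighted Littlewood-Paley identity; the exponent $\gamma=1+\tfrac{1}{q}-\tfrac{1}{p}$ is precisely what balances the scaling. Subharmonicity handles the subcase $p\le 2\le q$ directly, whereas $2<p<q$ requires an extra H\"older step in $\xi$. For case (B) with $2<p=q$, applying H\"older inside the area integral with exponents $p/(p-2)$ and $p/2$ splits the integrand; the remaining factor is precisely the Carleson measure condition on $\mu_g$, and the matching test-function argument makes this condition sharp.

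For the two cases (C) and (D) with $p>q$, the plan is instead to perform the H\"older step outside the area integral, in the variable $\xi\in\mathbb{T}$, so that an $L^{pq/(p-q)}(\mathbb{T})$-condition on a functional of $g'$ emerges. In case (C), $p>\max\{2,q\}$, H\"older with exponents $p/2$ and $p/(p-2)$ on $\Gamma(\xi)$ followed by raising to the $q/2$-power produces precisely the tent-space norm of $g'$ in $T^{pq/(p-q)}_{2p/(p-2),\widehat\omega,2/(p-2)}$, after which a Bergman-to-tent embedding absorbs the $f$-factor. In case (D), $q<p\le 2$, the nontangential maximal function characterization of $H^q$ applied to $(J_gf)'$ replaces the area function by a supremum over $\Gamma(\xi)$, producing exactly $B_{g,\omega}$; a final H\"older with exponent $p/(p-q)$ closes the estimate.

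The main obstacle will be the weighted Bergman-to-tent embedding used in (C) and (D): without closed-form reproducing kernels for a general $\omega\in\mathcal{D}$, I expect to replace the kernel estimates that work for $\omega_\alpha$ by a dyadic decomposition of $\mathbb{D}$ into Carleson squares together with systematic use of the forward and reverse doubling conditions, combined with an atomic decomposition of the relevant tent spaces to pass from sharp bounds on simple building blocks to the full embedding. A secondary delicate point is the necessity direction in (B), which requires a refinement of the test family $(f_a)$ to retrieve the Carleson-measure condition for $\mu_g$ without loss of constants.
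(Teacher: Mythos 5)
Your reduction of $\|J_g f\|_{H^q}$ to the area integral $\int_{\mathbb{T}}\big(\int_{\Gamma(\xi)}|f|^2|g'|^2\,dA\big)^{q/2}d\xi$ is exactly the paper's starting point (the Calder\'on-type Lemma \ref{le1}), and your sufficiency sketches for (B), (C), (D) --- H\"older inside the cone when $p=q>2$, H\"older over $\xi\in\mathbb{T}$ when $p>q$ --- follow the same lines as Propositions \ref{t-c-2}, \ref{t-c-3} and \ref{t4}. For (A) the paper goes instead through the Dirichlet-type embedding of Lemma \ref{le8} rather than the area function, but that is a difference of route, not of substance.

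The genuine gap is your necessity argument. You propose to test $J_g$ only on the one-parameter family $f_a(z)=(1-\overline{a}z)^{-c}/\|(1-\overline{a}\,\cdot)^{-c}\|_{L_a^p(\omega)}$. This suffices for case (A), where the target condition is a pointwise supremum: evaluating near $a$ yields precisely the Bloch-type bound on $|g'(a)|$. But in cases (B), (C) and (D) the target conditions are \emph{global}: a Carleson-measure condition on $\mu_g$, membership of $g'$ in $T^{pq/(p-q)}_{2p/(p-2),\widehat{\omega},2/(p-2)}$, and $B_{g,\omega}\in L^{pq/(p-q)}(\mathbb{T})$, respectively. A single bump $f_a$ concentrated near $a$ can only produce a bound on a local quantity at $a$, uniformly in $a$ --- an $L^\infty$-type conclusion --- and no choice of $c$ converts this into an $L^{pq/(p-q)}(\mathbb{T})$ estimate for the nontangential maximal function of $g'$ in (D), nor into the tent-space membership in (C). What is actually required (and what the paper does) is Luecking's randomization: test on $F_t=\sum_k\lambda_k r_k(t)f_{a_k}$ over an $r$-lattice $\{a_k\}$, average in $t$ via Khinchine's and Kahane's inequalities (Lemmas \ref{le-Khinchine} and \ref{le-Kahane}), and then extract the integral condition through the duality and factorization of the tent sequence spaces $T_q^p(\{a_k\})$ (Lemmas \ref{le2} and \ref{le4}); in case (B) the analogous step is Proposition \ref{vanishing Carleson}, whose proof likewise rests on the atomic decomposition of Lemma \ref{atomic decomposition} plus Khinchine. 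Your closing remark about atomic decompositions is aimed at the sufficiency-side embedding, which for (C) reduces to Fubini together with $|I(z)|\asymp 1-|z|$ and for (D) to a lattice/subharmonicity argument exploiting $p\le 2$, so it does not fill this hole. Without the randomization and tent-space duality step, the necessity halves of (B), (C) and (D) are not proved.
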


\begin{theorem} \label{c-t1} Let $0<p,q<\infty$, $\omega\in\mathcal{D}$ and $g\in H(\mathbb{D})$. The following statements hold:
\begin{description}
\item[(A)] If $0<p\leq \min\{2,q\}$ or $2<p<q<\infty$, then $J_{g}:L_{a}^{p}(\omega)\to H^{q}$ is compact if and only if
$
g\in\mathcal{B}_{0}^{\gamma,\widehat{\omega}^{\frac{1}{p}}},
$
where $\gamma=1+\frac{1}{q}-\frac{1}{p}$;
\item[(B)] If  $2<p=q<\infty$, then $J_{g}:L_{a}^{p}(\omega)\to H^{q}$ is compact if and only if the positive measure
$
\mu_{g}
$
is a vanishing Carleson measure, where $d\mu_{g}(z)=|g'(z)|^{\frac{2p}{p-2}}\frac{(1-|z|^{2})^{\frac{p+2}{p-2}}}{\widehat{\omega}(z)^{\frac{2}{p-2}}}dA(z)$;
\item[(C)] If $0<q<\infty$ and $p>\max\{2,q\}$, then $J_{g}:L_{a}^{p}(\omega)\to H^{q}$ is compact if and only if
$g'\in T_{\frac{2p}{p-2},\widehat{\omega},\frac{2}{p-2}}^{\frac{pq}{p-q}}$;
\item[(D)] If $0<q<p\leq2$, then $J_{g}:L_{a}^{p}(\omega)\to H^{q}$ is compact if and only if
\[
\lim\limits_{r\to1^{-}}\int_{\mathbb{T}}
\sup_{z\in\Gamma(\xi)\backslash r\mathbb{D}}
|g'(z)|^{\frac{pq}{p-q}}\frac{(1-|z|^{2})^{\frac{pq}{p-q}}}
{\widehat{\omega}(z)^{\frac{q}{p-q}}}d\xi=0.
\]
\end{description}
\end{theorem}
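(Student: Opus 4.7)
My approach is to run the proof of Theorem \ref{b-t1} in parallel, modifying each step by the standard compactness reduction for Volterra operators. I would first record the criterion that $J_g: L_a^p(\omega)\to H^q$ is compact if and only if $\|J_g f_n\|_{H^q}\to 0$ for every norm-bounded sequence $\{f_n\}\subset L_a^p(\omega)$ with $f_n\to 0$ uniformly on compact subsets of $\mathbb{D}$; this is a classical reduction (see \cite{Wu-Scichina, Pau-2021}) and follows from normal families together with the boundedness of $J_g$ supplied by Theorem \ref{b-t1}.

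For sufficiency in parts (A) and (B), fix such a sequence $\{f_n\}$ and a radius $r\in(0,1)$. Using the Littlewood--Paley description of $H^q$ together with $(J_g f_n)'(z)=f_n(z)g'(z)$, I would bound $\|J_g f_n\|_{H^q}$ by an area function of $f_n g'$ and split the inner integral across $\Gamma(\xi)\cap r\mathbb{D}$ and $\Gamma(\xi)\setminus r\mathbb{D}$. Uniform convergence $f_n\to 0$ on $r\overline{\mathbb{D}}$ kills the first piece as $n\to\infty$, while on the tail the little-Bloch hypothesis in (A) and the vanishing Carleson hypothesis on $\mu_g$ in (B) each force the remainder to be $o(1)$ uniformly in $n$; letting $r\to 1^-$ closes the argument. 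For necessity in (A) and (B), I would test against a normalized family $\{F_a\}_{a\in\mathbb{D}}$ in $L_a^p(\omega)$ with $\|F_a\|_{L^p(\omega)}\asymp 1$, $|F_a(a)|\asymp (1-|a|^2)^{-1/p}\widehat\omega(a)^{-1/p}$, and $F_a\to 0$ uniformly on compacta as $|a|\to 1^-$; such atoms are built from reproducing-kernel estimates of the type developed in \cite{PR19}. Applying the compactness criterion to $\{F_a\}$ and invoking the usual $H^q$ lower estimates (pointwise for (A), area-function on Carleson squares $S(I)$ for (B)) delivers the required vanishing conditions.

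Parts (C) and (D) are in fact easier: in both, the compactness conditions follow directly from the corresponding boundedness conditions. In (C), the sufficiency proof of Theorem \ref{b-t1}(C) produces, on a sequence $\{f_n\}$ as above, an integrand dominated by a fixed $L^1(\mathbb{T})$ function thanks to $g'\in T^{pq/(p-q)}_{2p/(p-2),\widehat\omega,2/(p-2)}$, while $f_n\to 0$ pointwise on $\mathbb{D}$ drives the integrand to $0$ pointwise; dominated convergence closes the gap. In (D), since $\Gamma(\xi)\setminus r\mathbb{D}\to\emptyset$ as $r\to 1^-$ and $B_{g,\omega}\in L^{pq/(p-q)}(\mathbb{T})$ by Theorem \ref{b-t1}(D), dominated convergence yields the stated tail equality automatically, and the truncation argument from the proof of (D)-boundedness then delivers $\|J_g f_n\|_{H^q}\to 0$. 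The main technical hurdle I anticipate is part (B): the interplay of the Littlewood--Paley area function, the geometry of Carleson squares, and the forward/reverse doubling properties of $\widehat\omega$ will require a careful matching of test atoms to squares so that both the necessity lower bound and the sufficiency tail control remain sharp simultaneously.
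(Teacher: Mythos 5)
Your outline for parts (A) and (C) is essentially the paper's route (in (A) the paper uses the Dirichlet-type embedding of Lemma \ref{le8} rather than the Littlewood--Paley area function, and the necessity test functions are exactly the normalized kernel powers you describe; in (C) the paper's split at a radius $r_{0}$ where the tail of the tent-space norm of $g'$ is small is what makes your ``dominated convergence'' step rigorous, since the H\"older majorant is only integrable in $\xi$, not pointwise bounded). But there are two genuine gaps. The most serious is part (D): your claim that the vanishing-tail condition follows ``automatically'' from $B_{g,\omega}\in L^{\frac{pq}{p-q}}(\mathbb{T})$ by dominated convergence is false. For fixed $\xi$ the quantity $\sup_{z\in\Gamma(\xi)\setminus r\mathbb{D}}|g'(z)|(1-|z|^{2})\widehat{\omega}(z)^{-1/p}$ decreases, as $r\to1^{-}$, to the nontangential $\limsup$ of the same expression, which need not vanish even when the full sup over $\Gamma(\xi)$ is finite; a supremum, unlike the integrals appearing in case (C), has no absolute continuity. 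This is precisely why boundedness and compactness are \emph{not} equivalent in case (D) and why Theorems \ref{b-t1}(D) and \ref{c-t1}(D) state different conditions. The actual necessity argument (Proposition \ref{t4}, Steps 1--2) discretizes $\Gamma(\xi)$ by a $\tilde{\delta}$-lattice, builds randomized test functions $F_{t,p,k_{4},\{y_{k}\}}$ supported on the tail of the lattice (so that they tend to $0$ on compacta as $k_{4}\to\infty$), and exploits the duality of $T_{\iota}^{(ps/(p-q))'}(\{a_{k}\})$ with $T_{\infty}^{ps/(p-q)}(\{a_{k}\})$ together with the factorization Lemma \ref{le4} and the Khinchine--Kahane inequalities; none of this is present in your sketch.

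The second gap is the necessity direction of part (B). Testing $J_{g}$ against a single normalized kernel family only yields the averaged estimate \eqref{aaeq2}, i.e.\ that $|g'(z)|^{2}(1-|z|)^{1-\frac{2}{p}}\widehat{\omega}(z)^{-\frac{2}{p}}dA(z)$ is a vanishing $\frac{p-2}{p}$-Carleson measure, whereas the target condition involves the power $|g'|^{\frac{2p}{p-2}}$ with $\frac{2p}{p-2}>2$. Upgrading the exponent is exactly the content of Proposition \ref{vanishing Carleson} (the implication (ii)$\Rightarrow$(i)), a Luecking-type self-improvement whose proof requires the atomic decomposition of $L_{a}^{p}(\omega)$ for $\mathcal{D}$ weights (Lemma \ref{atomic decomposition}), Khinchine's inequality, and the $\ell^{1/\iota}$--$\ell^{1/(1-\iota)}$ duality, followed by a subharmonicity step to return from the averaged measure $\nu_{g}(D(z,t))$ to $|g'|^{\frac{2p}{p-2}}$. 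Your phrase ``careful matching of test atoms to squares'' correctly anticipates that (B) is the delicate case, but it does not identify, let alone supply, this mechanism; without it the necessity of the vanishing Carleson condition on $\mu_{g}$ is not established.
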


The following result gives descriptions for the boundedness and compactness of $J_{g}$ from Hardy spaces to weighted Bergman spaces. For a positive measure $\mu$, define
\[
\widetilde{\mu}(\xi)=\int_{\Gamma(\xi)}\frac{d\mu(\lambda)}{1-|\lambda|^{2}},\quad\xi\in\mathbb{T}.
\]
\begin{theorem} \label{b-t2} Let $0<p,q<\infty$. For $\omega\in\mathcal{D}$ and $g\in H(\mathbb{D})$, let $d\mu_{g}(z)=|g'(z)|^{q}(1-|z|^{2})^{q}\omega(z)dA(z)$. Then
\begin{description}
\item[(A)]  If $0<p\leq q <\infty$, then $J_{g}:H^{p}\to L_{a}^{q}(\omega)$ is bounded (compact) if and only if $\mu_{g}$ is a $q/p$-Carleson measure (vanishing $q/p$-Carleson measure);
\item[(B)] If $0<q<p<\infty$, then $J_{g}:H^{p}\to L_{a}^{q}(\omega)$ is compact if and only if
$J_{g}:H^{p}\to L_{a}^{q}(\omega)$ is bounded if and only if $\widetilde{\mu_{g}}\in L^{\frac{p}{p-q}}(\mathbb{T})$.
\end{description}
\end{theorem}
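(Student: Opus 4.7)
The plan is to strip the weight $\omega$ from the target norm via a Littlewood-Paley estimate and then reduce both parts of the theorem to classical Carleson-measure embedding theorems for the Hardy space. Since $(J_{g}f)(0)=0$ and $(J_{g}f)'(z)=f(z)g'(z)$, and since $\omega\in\mathcal{D}$, the Littlewood-Paley description of the weighted Bergman norm proved in \cite{PR19} yields
\[
\|J_{g}f\|_{L^{q}_{a}(\omega)}^{q} \asymp \int_{\mathbb{D}}|f(z)|^{q}|g'(z)|^{q}(1-|z|^{2})^{q}\omega(z)\,dA(z) = \int_{\mathbb{D}}|f(z)|^{q}\,d\mu_{g}(z).
\]
Consequently $J_{g}:H^{p}\to L^{q}_{a}(\omega)$ is bounded (respectively compact) if and only if the identity inclusion $I_{\mu_{g}}:H^{p}\hookrightarrow L^{q}(\mu_{g})$ is bounded (respectively compact), and both parts now follow from the corresponding Hardy-space embedding theorems applied to $\mu=\mu_{g}$.

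For part (A), where $0<p\le q<\infty$, I would invoke the classical Carleson-measure theorem for the Hardy space: $H^{p}\hookrightarrow L^{q}(\mu)$ is bounded, respectively compact, if and only if $\mu$ is a $q/p$-Carleson, respectively vanishing $q/p$-Carleson, measure. Applied to $\mu_{g}$ this is exactly the claimed condition.

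For part (B), where $0<q<p<\infty$, Luecking's theorem identifies boundedness of $H^{p}\hookrightarrow L^{q}(\mu)$ with $\widetilde{\mu}\in L^{p/(p-q)}(\mathbb{T})$, which settles the boundedness direction immediately. The real content is that in this regime boundedness already forces compactness. For this I would take a bounded sequence $\{f_{n}\}\subset H^{p}$ with $f_{n}\to 0$ weakly; standard Hardy-space theory gives uniform convergence of $f_{n}$ to $0$ on compact subsets of $\mathbb{D}$ together with a uniform $L^{p}(\mathbb{T})$-bound on the nontangential maximal functions $N(f_{n})$. Using the Fubini-type pointwise bound $\int_{\mathbb{D}}|h|^{q}\,d\mu \lesssim \int_{\mathbb{T}}N(h)^{q}\,\widetilde{\mu}\,d\xi$ (which is precisely what underlies Luecking's criterion), splitting $\mathbb{D}=r\mathbb{D}\cup(\mathbb{D}\backslash r\mathbb{D})$, and applying H\"older with the exponent pair $(p/q,p/(p-q))$, the outer piece is controlled by a constant times $\|N(f_{n})\|_{L^{p}(\mathbb{T})}^{q}$ times the $L^{p/(p-q)}(\mathbb{T})$-norm of $\widetilde{\mu_{g}}$ restricted to the Stolz-cone intersection with $\mathbb{D}\backslash r\mathbb{D}$; the latter vanishes as $r\to 1^{-}$ by absolute continuity of the $L^{p/(p-q)}$-norm, while the inner piece vanishes via uniform interior convergence of $f_{n}$.

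The main obstacle I anticipate is orchestrating this last compactness step cleanly, in particular so that the tent-space reformulation of Luecking's criterion couples properly with the nontangential maximal function and the locally uniform convergence of $f_{n}$. Once this is organized, the remaining content is a direct application of well-known Hardy-space Carleson-measure theory, with the Littlewood-Paley equivalence doing the work of transferring the problem from $L^{q}_{a}(\omega)$ to the Carleson embedding into $L^{q}(\mu_{g})$.
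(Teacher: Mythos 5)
Your proposal is correct and follows essentially the same route as the paper: the Littlewood--Paley equivalence for $\mathcal{D}$ weights reduces everything to the embedding $H^{p}\hookrightarrow L^{q}(\mu_{g})$, part (A) is the classical Carleson theorem, part (B) uses Luecking's criterion for boundedness, and the compactness step is carried out exactly as you describe --- splitting off $r_{0}\mathbb{D}$, applying Fubini with $|I(z)|\asymp 1-|z|$, H\"older with exponents $(p/q,\,p/(p-q))$, and the nontangential maximal function bound $\int_{\mathbb{T}}\sup_{z\in\Gamma(\xi)}|f_{j}(z)|^{p}\,d\xi\lesssim\|f_{j}\|_{H^{p}}^{p}$. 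No gaps.
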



\begin{remark}\label{r1}
Theorem \ref{b-t1}-Theorem \ref{b-t2} generalize the recent work of \cite{Pau-2020,Pau-2021} from standard weights $\omega_{\alpha}(z)=(1+\alpha)(1-|z|^2)^\alpha dA(z),-1< \alpha<\infty$ to $\mathcal{D}$ weights.
The main strategies are based on a combination of the basic ideas in \cite{Wu-Scichina,Pau-2020,Pau-2021}. Nevertheless, there are two additional obstacles we need to overcome. The first one is the lack of explicit expressions for Bergman kernels of $\mathcal{D}$ weighted Bergman spaces. In addition, the parameter $\alpha$ plays important roles in \cite{Pau-2020,Pau-2021} for standard weighted Bergman spaces.
We use some delicate analysis of the weights to capture the suitable information from $\omega$ instead of $\alpha$.
\end{remark}

\begin{remark}
Since the proof of boundedness (Theorem \ref{b-t1}) and compactness (Theorem \ref{c-t1}) of $J_g$ share several overlaps, we just present detailed proof of Theorem \ref{c-t1}.
\end{remark}
Throughout this paper, for a radial weight $\omega$, we denote $\omega(E)=\int_{E}\omega(z)dA(z)$, for any measurable set $E\subseteq\mathbb{D}$. And we will use $a\lesssim b$ to represent that there exists a constant $C=C(\cdot)>0$ satisfying $a\leq Cb$, $a\gtrsim b$ if
there exists a constant $C=C(\cdot)>0$ satisfying $a\geq Cb$, where the constant $C(\cdot)$ depends on the parameters indicated in the parenthesis, varying under different circumstances. Moreover, if $a\lesssim b$ and $b\gtrsim a$, then we denote by $a\asymp b$. Besides, for a number $p>1$, let $p'$ be the conjugate index of $p$ such that $\frac{1}{p}$+$\frac{1}{p'}$=1.

\section{From Bergman space into Hardy space}
The main goal in this section is to prove Theorem \ref{c-t1}. The whole proof is divided into four parts.

\subsection{Case A} First we are devoted to dealing with the case when $0<p\leq \min\{2,q\}$ or $2<p<q<\infty$.
\begin{proposition} \label{t-c-1} Assume $0<p\leq \min\{2,q\}$ or $2<p<q<\infty$. Let $\omega$ be a $\mathcal{D}$ weight and $\gamma=1+\frac{1}{q}-\frac{1}{p}$. Then the Volterra type integration operator $J_{g}:L_{a}^{p}(\omega)\to H^{q}$ is compact if and only if
$
g\in\mathcal{B}_{0}^{\gamma,\widehat{\omega}^{1/p}},
$
where $\widehat{\omega}(z)=\int_{|z|}^1\omega(r)dr$.
\end{proposition}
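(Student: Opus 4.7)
The plan is to prove both implications by reusing the machinery that underlies the boundedness assertion Theorem~\ref{b-t1}(A) and combining it with the standard upgrade from boundedness to compactness. For necessity I will exhibit a family in $L^p_a(\omega)$ that is bounded but converges to zero uniformly on compact subsets of $\mathbb{D}$, so that compactness of $J_g$ forces its $H^q$-images to vanish, and then localize to recover the desired little-$o$ Bloch-type decay of $g'$. For sufficiency I will use the sequential compactness criterion and split the Lusin area integral of $(J_g f_n)'=f_n g'$ into an interior piece, controlled by local uniform smallness of $f_n$, and a boundary piece, controlled by the vanishing hypothesis $g\in\mathcal{B}_0^{\gamma,\widehat{\omega}^{1/p}}$.

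For necessity, for each $a\in\mathbb{D}$ I will take
\[
f_a(z)=\frac{(1-|a|^2)^N}{\widehat{\omega}(a)^{1/p}(1-\bar a z)^{N+\frac{1}{p}}},
\]
with $N$ sufficiently large. Standard integral estimates for $\mathcal{D}$-weights (see \cite{PR19}) give $\|f_a\|_{L^p(\omega)}\asymp 1$ uniformly in $a$, while $f_a\to 0$ uniformly on compact subsets as $|a|\to 1^-$; thus compactness of $J_g$ forces $\|J_g f_a\|_{H^q}\to 0$. Restricting the outer integral in the Fefferman--Stein area-function description of $H^q$ applied to $(J_g f_a)'=f_a g'$ to an arc around $a/|a|$ of length $\asymp 1-|a|$, and estimating the Lusin integral from below by the sub-mean-value inequality on a Euclidean disk of radius $\asymp 1-|a|$ centred at $a$, I obtain
\[
\|J_g f_a\|_{H^q}\gtrsim |f_a(a)||g'(a)|(1-|a|^2)^{1+\frac{1}{q}}=\frac{(1-|a|^2)^{\gamma}}{\widehat{\omega}(a)^{1/p}}|g'(a)|,
\]
so that $g\in\mathcal{B}_0^{\gamma,\widehat{\omega}^{1/p}}$.

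For sufficiency I will invoke the standard criterion that $J_g$ is compact precisely when every bounded sequence $(f_n)\subset L^p_a(\omega)$ with $f_n\to 0$ locally uniformly satisfies $\|J_g f_n\|_{H^q}\to 0$. Starting from
\[
\|J_g f_n\|_{H^q}^q\asymp\int_{\mathbb{T}}\bigg(\int_{\Gamma(\xi)}|f_n(z)g'(z)|^2\,dA(z)\bigg)^{q/2}d\xi
\]
and fixing $\varepsilon>0$, the vanishing-Bloch hypothesis furnishes $r\in(0,1)$ with $|g'(z)|\le\varepsilon\,\widehat{\omega}(z)^{1/p}(1-|z|^2)^{-\gamma}$ on $|z|>r$. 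I will split each inner integral at $|z|=r$. The contribution from $\Gamma(\xi)\cap r\mathbb{D}$ is majorised by $\sup_{|z|\le r}|f_n(z)|\cdot\sup_{|z|\le r}|g'(z)|$, which tends to zero as $n\to\infty$ by local uniform convergence. The contribution from $\Gamma(\xi)\setminus r\mathbb{D}$, after absorbing the Bloch bound into the integrand, reduces to the very tent-space estimate that controls the boundedness case in Theorem~\ref{b-t1}(A), and is therefore bounded by $C\varepsilon^{q}\sup_n\|f_n\|_{L^p(\omega)}^{q}$. Sending $n\to\infty$ and then $\varepsilon\to 0$ gives the compactness.

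The sub-ranges $0<p\le\min\{2,q\}$ and $2<p<q$ share the above scheme but require different intermediate estimates for the boundary piece: for $p\le 2$ Minkowski's inequality in the Lusin integral combines cleanly with the area-function characterization of $L^p_a(\omega)$ from \cite{PR19}, whereas for $p>2$ duality against a tent-space predual is required and the same Littlewood--Paley identity must be used in its dual form. The principal obstacle throughout, emphasised in Remark~\ref{r1} and absent in the standard-weight analysis of \cite{Pau-2020}, is the lack of closed-form Bergman kernels for $\omega\in\mathcal{D}$: each step that previously relied on an explicit reproducing-kernel identity has to be re-cast in terms of two-sided size estimates involving the tail $\widehat{\omega}$ and the $\mathcal{D}$-doubling inequalities.
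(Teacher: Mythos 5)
Your proposal is correct in outline, and both halves deviate from the paper's actual argument in ways worth recording. For the necessity, the paper tests $J_g$ against normalized powers of the reproducing kernel, $f_\lambda=(K_\lambda^\omega)^\alpha/\|K_\lambda^\omega\|_{L_a^{\alpha+1-1/p}(\omega)}^{\alpha+1-1/p}$, and then extracts the pointwise decay of $g'$ from the growth estimate $|h'(z)|\lesssim (1-|z|^2)^{-1-1/q}\|h\|_{H^q}$ applied to $h=J_gf_\lambda$ at $z=\lambda$; you instead use the explicit family $(1-|a|^2)^N\widehat{\omega}(a)^{-1/p}(1-\bar az)^{-N-1/p}$ (normalized by the same integral estimate from \cite{PRS}, so $\|f_a\|_{L^p(\omega)}\asymp1$ and $f_a(a)\asymp(\widehat{\omega}(a)(1-|a|))^{-1/p}$) and get the lower bound from the area-function description of $H^q$ restricted to the arc $I(a)$ together with the sub-mean-value inequality on a disk of radius $\asymp 1-|a|$ about $a$. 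Your computation checks out ($(1-|a|)^{1/q}$ from the arc, $(1-|a|)$ from the disk, giving the exponent $1+1/q$), and it avoids the reproducing-kernel estimates entirely; this half is a clean, complete alternative.

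The sufficiency is where the two routes genuinely differ in cost. The paper does \emph{not} use the Calder\'on area function here at all: it invokes the Dirichlet-type embedding of Lemma~\ref{le8}, $\|F\|_{H^q}\lesssim\|F'\|_{L_a^p(\omega_{p\gamma-1})}$ (valid exactly in the two sub-ranges of the hypothesis, since $p\gamma-1$ equals $q-1$ when $p=q\le 2$ and $p+\frac{p}{q}-2$ when $p<q$), which collapses the whole estimate to a single weighted area integral over $\mathbb{D}$; the split into $r\mathbb{D}$ and its complement then closes with Lemma~\ref{prop-reverse} and no tent spaces. Your route via $\|J_gf_n\|_{H^q}^q\asymp\int_{\mathbb{T}}(\int_{\Gamma(\xi)}|f_ng'|^2)^{q/2}d\xi$ instead requires the embedding
\[
\int_{\mathbb{T}}\bigg(\int_{\Gamma(\xi)}|f(z)|^2\frac{\widehat{\omega}(z)^{2/p}}{(1-|z|^2)^{2\gamma}}\,dA(z)\bigg)^{q/2}d\xi\lesssim\|f\|_{L_a^p(\omega)}^q,
\]
which you assert "reduces to the very tent-space estimate that controls the boundedness case"; but the paper's proof of Theorem~\ref{b-t1}(A) does not establish any such estimate, so you cannot borrow it — it has to be proved. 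For $0<p\le\min\{2,q\}$ your Minkowski sketch does work (after Minkowski one is left with $\int_{\mathbb{D}}|f|^2\widehat{\omega}^{2/p}(1-|z|)^{2/p-2}dA$, which is handled by factoring $|f|^2=|f|^p|f|^{2-p}$ and the pointwise bound $|f(z)|\lesssim\|f\|_{L^p(\omega)}(\widehat{\omega}(z)(1-|z|))^{-1/p}$). For $2<p<q$, however, "duality against a tent-space predual" is only a pointer, and that sub-case is the nontrivial part of your route. So: necessity fully sound and a nice variant; sufficiency structurally sound but resting on an embedding you have not supplied in the harder sub-range, which the paper's Lemma~\ref{le8} device renders unnecessary.
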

To show Proposition \ref{t-c-1}, we first give some useful lemmas.
By a disk version of Calder\'{o}n's theorem, one has the following:
\begin{lemma} \label{le1} \cite[Theorem 1.3]{pav1} Suppose $f$ is analytic on $\mathbb{D}$ with $f(0)=0$. If $0<p<\infty$, then
\[
\|f\|_{H^{p}}^{p}\asymp\int_{\mathbb{T}}\bigg(\int_{\Gamma(\xi)}|f'(z)|^{2}dA(z)\bigg)^{\frac{p}{2}}d\xi.
\]
\end{lemma}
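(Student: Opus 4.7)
I would interpose the non-tangential maximal function $N(f)(\xi)=\sup_{z\in\Gamma(\xi)}|f(z)|$ and establish the chain of equivalences
\[
\|f\|_{H^p}^p\asymp\int_{\mathbb{T}}N(f)(\xi)^p\,d\xi\asymp\int_{\mathbb{T}}S(f)(\xi)^p\,d\xi,
\]
where $S(f)(\xi)=\bigl(\int_{\Gamma(\xi)}|f'(z)|^2\,dA(z)\bigr)^{1/2}$ is the Lusin area function. The first $\asymp$ is the classical non-tangential maximal characterization of $H^p$ (Fatou plus Marcinkiewicz--Zygmund), so the actual content is the comparison $\|N(f)\|_{L^p(\mathbb{T})}\asymp\|S(f)\|_{L^p(\mathbb{T})}$ for all $0<p<\infty$.

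As a first step I would dispose of the model case $p=2$ directly. Fubini combined with the elementary fact $|\{\xi\in\mathbb{T}:z\in\Gamma(\xi)\}|\asymp 1-|z|$ yields
\[
\int_{\mathbb{T}}S(f)(\xi)^2\,d\xi\asymp\int_{\mathbb{D}}|f'(z)|^2(1-|z|)\,dA(z),
\]
and expanding $f(z)=\sum_{n\ge1}a_nz^n$ (using $f(0)=0$) evaluates the right-hand side to $\sum_{n\ge1}\tfrac{n}{n+1}|a_n|^2$, which is comparable to $\|f\|_{H^2}^2$. This pins down the model case and simultaneously explains why the hypothesis $f(0)=0$ is necessary.

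For general $0<p<\infty$ I would adapt the Burkholder--Gundy--Silverstein good-$\lambda$ method to the unit disk. The key inequalities to establish are, for every $\lambda>0$ and every sufficiently small $\gamma>0$,
\[
|\{\xi:S(f)(\xi)>2\lambda,\,N(f)(\xi)\le\gamma\lambda\}|\le C\gamma^2|\{\xi:S(f)(\xi)>\lambda\}|,
\]
together with the symmetric inequality obtained by interchanging $S$ and $N$ (in which the normalization $f(0)=0$ enters). Multiplying by $p\lambda^{p-1}$, integrating in $\lambda\in(0,\infty)$, and absorbing the level set on the right-hand side produces the desired $L^p$ equivalence for every $0<p<\infty$.

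The main obstacle is the verification of the good-$\lambda$ inequality itself. This forces a Whitney decomposition of the open set $\{\xi\in\mathbb{T}:S(f)(\xi)>\lambda\}$ into arcs $\{I_j\}$ and a local estimate over the Carleson tent $T(I_j)$ above each $I_j$: using the holomorphic identity $\Delta|f|^2=2|f'|^2$ with Green's theorem, together with the non-tangential bound $|f|\le\gamma\lambda$ along the portion of $\partial T(I_j)$ lying over points where $N(f)\le\gamma\lambda$, one controls $\int_{T(I_j)}|f'|^2(1-|z|)\,dA(z)$ by $\gamma^2\lambda^2|I_j|$, which after Fubini yields exactly the gain factor $\gamma^2$ needed to close the iteration. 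Routine sub-mean-value estimates handle the tail contributions from cones $\Gamma(\xi)$ that protrude outside the Whitney tent.
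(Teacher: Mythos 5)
The paper does not prove this lemma at all: it is quoted verbatim as a disk version of Calder\'on's area theorem and deferred entirely to \cite[Theorem 1.3]{pav1}, so there is no in-paper argument to compare against. Your outline is the classical Burkholder--Gundy--Silverstein route and is sound as a strategy: the chain $\|f\|_{H^p}\asymp\|N(f)\|_{L^p(\mathbb{T})}\asymp\|S(f)\|_{L^p(\mathbb{T})}$, the Fubini computation for $p=2$, and the pair of good-$\lambda$ inequalities with Whitney decomposition and Green's theorem are exactly the standard ingredients (Pavlovi\'c's cited exposition reaches the same theorem by a somewhat more elementary path built on the $p=2$ case, subharmonicity and the Hardy--Littlewood maximal theorem, rather than good-$\lambda$). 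Three points you should tighten if this were to be written out. First, the identity is $\Delta|f|^2=4|f'|^2$ (with $\Delta=4\partial\bar\partial$), not $2|f'|^2$; harmless for an $\asymp$ statement but worth getting right since it feeds the Green's-theorem estimate. Second, the absorption step after integrating $p\lambda^{p-1}\,d\lambda$ requires the a priori finiteness of $\int_0^\infty p\lambda^{p-1}|\{S(f)>\lambda\}|\,d\lambda$ (respectively of the corresponding integral for $N$); the standard fix is to run the argument for the dilates $f_r(z)=f(rz)$, for which both quantities are finite, and let $r\to1^-$. Third, the entire content of the theorem sits inside the two good-$\lambda$ inequalities, which you describe but do not verify; in particular the direction controlling $N$ by $S$ (the one where $f(0)=0$ enters and which gives the $\gtrsim$ half of the lemma) is the harder of the two and needs the full Whitney/stopping-time construction. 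As an outline of a known classical proof standing in for a citation, this is acceptable; as a self-contained proof it is incomplete at precisely that step.
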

The following Dirichlet-type embedding theorem can be found in \cite{Pau-2020}.
\begin{lemma} \label{le8} Suppose $f$ is analytic on $\mathbb{D}$ with $f(0)=0$. If $0<q\leq2$,  then
\begin{equation}\label{eq19}
\|f\|_{H^{q}}\lesssim\|f'\|_{L_{a}^{q}(\omega_{q-1})}.
\end{equation}
If $0<p<q<\infty$, then
\begin{equation}\label{eq20}
\|f\|_{H^{q}}\lesssim\|f'\|_{L_{a}^{p}(\omega_{p+\frac{p}{q}-2})}.
\end{equation}
\end{lemma}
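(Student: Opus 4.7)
The plan is to handle both inequalities with a common architecture: reduce $\|f\|_{H^q}^q$ to an area-function integral of $f'$ via Lemma \ref{le1}, decompose the Stolz region into pseudo-hyperbolic Whitney tiles, and estimate on each tile with the sub-mean value inequality for the subharmonic function $|f'|^p$. After applying Lemma \ref{le1}, both claims reduce to tent-to-Bergman embeddings for the analytic function $h=f'$.

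For \eqref{eq19} ($0<q\leq 2$), the target embedding is
\[
\int_{\mathbb{T}}\Bigl(\int_{\Gamma(\xi)}|h(z)|^{2}\,dA(z)\Bigr)^{q/2}d\xi
\;\lesssim\;
\int_{\mathbb{D}}|h(z)|^{q}(1-|z|^{2})^{q-1}\,dA(z).
\]
When $q=2$ this is just Fubini together with the identity $|\{\xi\in\mathbb{T}:z\in\Gamma(\xi)\}|\asymp 1-|z|$. When $0<q<2$, I would decompose $\Gamma(\xi)$ into a disjoint Whitney collection $\{B_n\}$ of pseudo-hyperbolic tiles with $|B_n|\asymp(1-|z_n|)^{2}$, apply the subadditivity $(\sum_n a_n)^{q/2}\leq\sum_n a_n^{q/2}$ (valid since $q/2\leq 1$), and invoke the sub-mean estimate $\sup_{B_n}|h|^{q}\lesssim|B_n|^{-1}\int_{\widehat{B}_n}|h|^{q}\,dA$, which holds because $|h|^{q}$ is subharmonic for $h$ analytic and $q>0$. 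This yields the local bound
\[
\Bigl(\int_{B_n}|h|^{2}\,dA\Bigr)^{q/2}
\lesssim |B_n|^{q/2-1}\int_{\widehat{B}_n}|h|^{q}\,dA
=(1-|z_n|)^{q-2}\int_{\widehat{B}_n}|h|^{q}\,dA.
\]
Summing over $n$ and integrating in $\xi$ produces an additional factor $1-|z_B|$ coming from $|\{\xi:B\subset\Gamma(\xi)\}|$, matching the weight $(1-|z|^{2})^{q-1}$ on the right-hand side.

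For \eqref{eq20} ($0<p<q<\infty$), the same strategy applies with the refined local estimate
\[
\Bigl(\int_{B_n}|h|^{2}\,dA\Bigr)^{q/2}
\lesssim |B_n|^{q/2-q/p}\Bigl(\int_{\widehat{B}_n}|h|^{p}\,dA\Bigr)^{q/p},
\]
obtained by combining $\int_{B_n}|h|^{2}\,dA\leq|B_n|\cdot\sup_{B_n}|h|^{2}$ with the sub-mean inequality $\sup_{B_n}|h|^{p}\lesssim|B_n|^{-1}\int_{\widehat{B}_n}|h|^{p}\,dA$ raised to the power $2/p$. Summing over Whitney tiles, integrating in $\xi$, and setting $y_B=(1-|z_B|)^{p+p/q-2}\int_{\widehat{B}_n}|h|^{p}\,dA$, a direct exponent check shows that the resulting sum is exactly $\sum_B y_B^{q/p}$. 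Since $q/p>1$, the embedding $\ell^{1}\hookrightarrow \ell^{q/p}$ delivers $\sum_B y_B^{q/p}\leq\bigl(\sum_B y_B\bigr)^{q/p}$, and this upper bound is comparable to $\Bigl(\int_{\mathbb{D}}|h(z)|^{p}(1-|z|^{2})^{p+p/q-2}\,dA(z)\Bigr)^{q/p}$ via the Whitney decomposition of $\mathbb{D}$.

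The main obstacle is the case $q<2$ of \eqref{eq19}: the simple Fubini argument that handles $q=2$ breaks down, and the tent-to-Bergman estimate genuinely uses analyticity of $h$ (it is false for generic measurable $h$, which is why the sub-mean step is indispensable). I also expect the exponent bookkeeping in \eqref{eq20} — in particular landing on exactly $p+p/q-2$ rather than a neighboring exponent — to require careful tracking of the powers of $|B_n|$ through the Whitney and $\ell^{q/p}$ steps.
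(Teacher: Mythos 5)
Your argument for \eqref{eq19} is correct and is the standard one: Lemma \ref{le1} reduces the claim to the embedding
\[
\int_{\mathbb{T}}\Bigl(\int_{\Gamma(\xi)}|f'|^{2}\,dA\Bigr)^{q/2}d\xi\lesssim\int_{\mathbb{D}}|f'(z)|^{q}(1-|z|^{2})^{q-1}\,dA(z),
\]
and for $q\leq 2$ the Whitney decomposition of $\Gamma(\xi)$, the subadditivity of $t\mapsto t^{q/2}$, and the sub-mean-value property of $|f'|^{q}$ give exactly this. (Note that the paper itself offers no proof of Lemma \ref{le8}; it is quoted from [MPPW20], so the only comparison to be made is with the correctness of your argument.)

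For \eqref{eq20} there is a genuine gap in the range $q>2$, a range the lemma must cover and which the paper actually uses (e.g.\ in Proposition \ref{t-c-1}, where \eqref{eq20} is invoked with $2<p<q$, and also with $p\leq 2<q$). ``The same strategy'' begins with
\[
\Bigl(\int_{\Gamma(\xi)}|h|^{2}\,dA\Bigr)^{q/2}=\Bigl(\sum_{n}\int_{B_{n}}|h|^{2}\,dA\Bigr)^{q/2}\leq\sum_{n}\Bigl(\int_{B_{n}}|h|^{2}\,dA\Bigr)^{q/2},
\]
and this subadditivity holds only because $q/2\leq 1$; for $q>2$ the inequality reverses. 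Your exponent bookkeeping after that point is consistent (the total power of $1-|z_{B}|$ is $1+q-\tfrac{2q}{p}=\tfrac{q}{p}\bigl(p+\tfrac{p}{q}-2\bigr)$, so the sum is indeed $\sum_{B}y_{B}^{q/p}$), so what you have proved is \eqref{eq20} for $0<p<q\leq 2$ only. The natural repair within the same framework --- first using $\ell^{p/2}\hookrightarrow\ell^{1}$ inside each $\Gamma(\xi)$ and only then raising to the power $q/p$ --- leads to the need to bound $\int_{\mathbb{T}}\bigl(\sum_{B\cap\Gamma(\xi)\neq\emptyset}d_{B}\bigr)^{s}d\xi$ for $s>1$ by $\bigl(\sum_{B}d_{B}(1-|z_{B}|)\bigr)^{s}$, which is false in general (a single tile near the boundary gives $d^{s}(1-|z_{B}|)$ on the left versus $d^{s}(1-|z_{B}|)^{s}$ on the right). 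Handling $q>2$ genuinely requires an extra ingredient: either the classical Flett/Hardy--Littlewood fractional-integration proof, or a duality--Khinchine argument for the tent sequence spaces $T^{q/p}_{1}(\{z_{B}\})$ of the kind the paper deploys in Section 2.3.
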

Now we recall two classes of weights which have close relations with $\mathcal{D}$ weights. A weight $\omega$ is called \textit{regular} \cite{PR} if $\omega$ is continuous and radial on $\mathbb{D}$ such that
$\psi_{\omega}(r)\asymp(1-r)$ for $0\leq r<1$, where $\psi_{\omega}(r)=\frac{\widehat{\omega}(r)}{\omega(r)}$ is the distortion function, which was introduced in \cite{S}. We denote the class of regular weights by $\mathcal{R}$. And a weight $\omega$ is a \textit{$\widehat{\mathcal{D}}$ weight} \cite{PRS} if there exists a positive constant $C$ such that
\[
\widehat{\omega}(r)\leq C \widehat{\omega}(\frac{1+r}{2}),\quad \textmd{for~all}~r\in[0,1).
\]
Note that $\mathcal{R}\subsetneq \mathcal{D}\subsetneq\widehat{\mathcal{D}}$ \cite{PR,PRS,PR19}. Here we list some useful properties of $\widehat{\mathcal{D}}$, $\mathcal{D}$ and regular weights which are important in our subsequent work.
\begin{lemma}\label{reproducing} \cite[Theorem C]{PRS}
Let $1<p<\infty$ and $\omega$ be a $\widehat{\mathcal{D}}$ weight. Then the reproducing kenel of $L_{a}^{2}(\omega)$ at $z$ has the following estimates,
\[
\|K_{z}^{\omega}\|_{L_{a}^{p}(\omega)}^{p}\asymp\int_{|z|}^{1}\frac{dt}{\widehat{\omega}(t)^{p-1}(1-t)^{p}},\quad |z|\to1^{-}.
\]
\end{lemma}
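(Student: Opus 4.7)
The plan is to establish matching upper and lower bounds for $\|K_z^\omega\|_{L_a^p(\omega)}^p$. By radiality of $\omega$ the kernel satisfies $K_z^\omega(w) = K_{|z|}^\omega(w\bar z/|z|)$, so both sides are rotation invariant in $z$ and we may take $z = r \in [0,1)$. Then $\{w^n/\sqrt{2\omega_{2n+1}}\}_{n\ge 0}$, with $\omega_{2n+1} = \int_0^1 s^{2n+1}\omega(s)\,ds$, is an orthonormal basis of $L_a^2(\omega)$, giving
\[
K_r^\omega(w) = \sum_{n=0}^\infty \frac{(rw)^n}{2\omega_{2n+1}}.
\]
The first technical ingredient is the coefficient asymptotics: integration by parts yields $\omega_{2n+1} = (2n+1)\int_0^1 s^{2n}\widehat\omega(s)\,ds$, and combining the concentration of $s^{2n}$ near $s=1$ with the doubling property $\widehat\omega \in \widehat{\mathcal{D}}$ produces $\omega_{2n+1} \asymp \widehat\omega(1-1/n)$ for large $n$. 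A comparison of the resulting sum with the corresponding integral via the substitution $t = 1-1/n$ already handles $p=2$: $K_r^\omega(r) \asymp \int_r^1 dt/(\widehat\omega(t)(1-t)^2)$.

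For the upper bound with general $p$, I would decompose the power series into dyadic blocks $P_k(w) := \sum_{2^k \le n < 2^{k+1}} (rw)^n/(2\omega_{2n+1})$, for which the coefficient estimate and the geometric factor $(r|w|)^n$ give $|P_k(w)| \lesssim (r|w|)^{2^k}/\widehat\omega(1-2^{-k})$. The $L^p(\mathbb{T})$-norm of $\sum_k P_k$ at radius $\rho$ is then controlled by (quasi-)triangle and lacunary inequalities, and integrating against $\omega(\rho)\rho\,d\rho$ in polar coordinates, together with the substitution $t = 1-2^{-k}$, telescopes to $\int_r^1 dt/(\widehat\omega(t)^{p-1}(1-t)^p)$. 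The matching lower bound is obtained by duality: the reproducing property gives $\|K_r^\omega\|_{L^p(\omega)} = \sup\{|f(r)| : f \in L_a^{p'}(\omega),\,\|f\|_{L^{p'}(\omega)}\le 1\}$, and a near-extremal test function $f(w) = \sum_{n\le N} w^n/(2\omega_{2n+1})$ with $N \asymp 1/(1-r)$ satisfies $f(r) \asymp K_r^\omega(r)$, while its $L^{p'}(\omega)$-norm is estimated by the same dyadic method; the ratio supplies the desired lower bound.

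The main obstacle is the dyadic block analysis. For standard weights the explicit formula $K_z(w) = (1-z\bar w)^{-(2+\alpha)}$ reduces everything to Forelli-Rudin integrals, but in the $\widehat{\mathcal{D}}$ setting only series estimates are available, so interactions between blocks must be controlled by hand. The saving grace is that blocks of essentially lacunary shape have tame $L^p(\mathbb{T})$ behaviour, and the $\widehat{\mathcal{D}}$ doubling is exactly what is needed for the resulting coefficient sums to telescope into the clean integral $\int_{|z|}^1 dt/(\widehat\omega(t)^{p-1}(1-t)^p)$. Tracking the $p$-dependence through the Hölder steps so as to land precisely on this integral, rather than on a coarser convex envelope, is the delicate bookkeeping that occupies most of the argument.
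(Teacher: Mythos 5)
You should first note that the paper itself contains no proof of this lemma: it is imported verbatim as Theorem~C of Pel\'aez--R\"atty\"a--Sierra \cite{PRS}, so the only meaningful comparison is with the proof in that reference, whose skeleton (monomial expansion $K_r^{\omega}(w)=\sum_n (rw)^n/(2\omega_{2n+1})$, the moment asymptotics $\omega_{2n+1}\asymp\widehat{\omega}(1-1/n)$, dyadic blocks, sum--integral comparison) your sketch does reproduce. The genuine gap is in the central $L^p(\mathbb{T})$ step. Your block bound $|P_k(w)|\lesssim (r|w|)^{2^k}/\widehat{\omega}(1-2^{-k})$ omits the number of terms in the block, which is $2^k$ (already at $w=|w|>0$ all terms are positive and comparable, so the bound as written is false). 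Even after restoring that factor, controlling $\|\sum_k P_k\|_{L^p(\mathbb{T})}$ by the triangle inequality applied to sup norms of blocks cannot be sharp, because the kernel is large only on an arc of length $\asymp 1-r|w|$. Concretely, for $\omega=\omega_\alpha$ your bound as written produces $(1-r)^{-(\alpha+1)(p-1)}$ (smaller than the truth, hence not an upper bound at all), while the corrected sup-norm bound plus the triangle inequality produces $(1-r)^{-(\alpha+2)(p-1)-1}$ (off by a factor $(1-r)^{-1}$); the correct growth is $(1-r)^{-(\alpha+2)(p-1)}$. What is needed, and what the cited proof uses, is the Mateljevi\'c--Pavlovi\'c formula for power series with non-negative coefficients, $\|\sum_n a_n\rho^n e^{in\theta}\|^p_{L^p(d\theta)}\asymp\sum_k 2^{-k}\big(\sum_{n\in I_k}a_n\rho^n\big)^p$ with $I_k=[2^k,2^{k+1})$: the weight $2^{-k}$ is exactly the missing arc-length factor, and with it the polar integration against $\omega(\rho)\rho\,d\rho$ gives $\sum_k 2^{k(p-1)}r^{p2^k}\widehat{\omega}(1-2^{-k})^{1-p}\asymp\sum_n n^{p-2}r^{pn}\widehat{\omega}(1-1/n)^{1-p}$, which is the sum that actually telescopes into the stated integral. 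Invoking ``lacunary inequalities'' does not substitute for this identity. Your lower bound via H\"older against a truncated kernel is sound and needs no duality theory, but it consumes the upper bound at the conjugate exponent, so it only closes once the upper bound is genuinely established for all $1<p<\infty$.

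A separate point you should have caught: with the limits as printed, $\int_{|z|}^{1}\widehat{\omega}(t)^{1-p}(1-t)^{-p}\,dt=+\infty$ for every $z$ (since $\widehat{\omega}$ is decreasing and $p>1$, the integrand dominates $\widehat{\omega}(|z|)^{1-p}(1-t)^{-p}$), and your claimed $p=2$ identity $K_r^{\omega}(r)\asymp\int_r^1 dt/(\widehat{\omega}(t)(1-t)^2)$ likewise has a divergent right-hand side. The substitution $t=1-1/n$ that you invoke sends the dominant range $1\le n\lesssim(1-r)^{-1}$ of the coefficient sum to $t\in(0,r)$, so the computation actually lands on $1+\int_{0}^{|z|}\widehat{\omega}(t)^{1-p}(1-t)^{-p}\,dt$, which is the form of Theorem~C in \cite{PRS} and the only form consistent with the display \eqref{reproducing-formula} that the paper derives from this lemma. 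The limits in the statement are a transcription error in the paper; a sketch that terminates at a divergent integral, instead of flagging and correcting this, has not proved the intended statement.
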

From Lemma \ref{reproducing}, we get
\begin{equation}\label{reproducing-formula}
\|K_{z}^{\omega}\|_{L_{a}^{p}(\omega)}^{p}\asymp\frac{1}{\widehat{\omega}(z)^{p-1}(1-|z|)^{p-1}},\quad z\in\mathbb{D}.
\end{equation}

By \cite[Proposition 5]{PRS} and its proof, we have
\begin{lemma}\label{prop-reverse}
Let $0<p<\infty$ and $\omega$ be a $\mathcal{D}$ weight. Then $\kappa(z)=(1-|z|)^{-1}\widehat{\omega}(z)$ is regular and $\|f\|_{L^{p}(\omega)}\asymp\|f\|_{L^{p}(\kappa)}$, for each analytic function $f$ on $\mathbb{D}$.
\end{lemma}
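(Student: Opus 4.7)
The plan is to verify the two assertions in turn. The first, regularity of $\kappa$, is a direct computation with the defining inequalities of $\mathcal{D}$ weights; the second, the norm equivalence, will follow from the first via a standard integration-by-parts argument.

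\textbf{Step 1: Regularity of $\kappa$.} Since $\kappa(z) = \widehat{\omega}(z)/(1-|z|)$ is positive, continuous, and radial, regularity reduces to $\psi_\kappa(r) \asymp (1-r)$, which by the definition of $\kappa$ is equivalent to $\widehat{\kappa}(r) \asymp \widehat{\omega}(r)$. For the upper bound I would decompose $[r,1)$ via the dyadic sequence $r_n = 1 - 2^{-n}(1-r)$ and note
\[
\int_{r_n}^{r_{n+1}} \frac{\widehat{\omega}(t)}{1-t}\,dt \;\lesssim\; \widehat{\omega}(r_n).
\]
Iterating the reverse doubling condition $C_1 \widehat{\omega}(1-(1-s)/C_2) \leq \widehat{\omega}(s)$ with the smallest $k$ satisfying $C_2^k \geq 2$ shows that each passage from $r_n$ to $r_{n+1}$ loses a factor of at most $C_1^{-k} < 1$, giving geometric decay $\widehat{\omega}(r_n) \lesssim \lambda^n \widehat{\omega}(r)$ for some $\lambda \in (0,1)$; summing yields $\widehat{\kappa}(r) \lesssim \widehat{\omega}(r)$. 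For the lower bound I would use the forward doubling condition to write $\widehat{\omega}(t) \geq C_3^{-1}\widehat{\omega}(r)$ on $[r,(1+r)/2]$ and compute $\int_r^{(1+r)/2} dt/(1-t) = \log 2$.

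\textbf{Step 2: Norm equivalence.} Passing to polar coordinates reduces both sides to radial integrals of the means $M_p(f,r)^p = \tfrac{1}{2\pi}\int_0^{2\pi}|f(re^{i\theta})|^p\,d\theta$, which are non-decreasing in $r$ by Hardy's convexity theorem. Integration by parts using $\omega(r) = -d\widehat{\omega}(r)/dr$ gives
\[
\int_0^1 M_p(f,r)^p \omega(r) r\, dr \;\asymp\; |f(0)|^p \widehat{\omega}(0) + \int_0^1 \widehat{\omega}(r)\, d\bigl(M_p(f,r)^p\bigr),
\]
provided the boundary term at $r=1$ vanishes; this I would establish by applying the computation first to $f_\rho(z) = f(\rho z)$ with $\rho \in (0,1)$ and then letting $\rho \to 1^-$, invoking monotone convergence and the hypothesis $\|f\|_{L^p(\omega)}<\infty$. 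The same formula holds with $\omega,\widehat{\omega}$ replaced by $\kappa,\widehat{\kappa}$, since Step 1 confirms $\kappa \in \mathcal{R} \subset \mathcal{D} \subset \widehat{\mathcal{D}}$ and $\widehat{\kappa}(0) \asymp \widehat{\omega}(0)$. The comparison $\widehat{\omega}\asymp\widehat{\kappa}$ from Step 1 then yields $\|f\|_{L^p(\omega)} \asymp \|f\|_{L^p(\kappa)}$ term by term.

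\textbf{Main obstacle.} The delicate part is (a) the iteration-index bookkeeping in Step 1 to match the reverse doubling scale $C_2$ to the dyadic scale $2$, which forces the introduction of the exponent $k = \lceil \log 2 / \log C_2 \rceil$; and (b) verifying the vanishing of the $r = 1$ boundary term in Step 2 for an arbitrary $f \in L^p(\omega)$ rather than a smooth or bounded one. The standard remedy for (b), approximating by $f_\rho$ and passing to the limit, is the natural but slightly delicate device; everything else is a routine consequence of the definitions and of Hardy's convexity theorem.
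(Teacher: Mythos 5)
The paper does not actually prove this lemma: it simply invokes \cite[Proposition 5]{PRS} and its proof. Your proposal reconstructs essentially that argument — reduce regularity of $\kappa$ to the two-sided estimate $\widehat{\kappa}(r)=\int_r^1\frac{\widehat{\omega}(t)}{1-t}\,dt\asymp\widehat{\omega}(r)$ (reverse doubling for the upper bound, forward doubling for the lower), then transfer the norm equivalence through the integration-by-parts identity $\int_0^1M_p(f,r)^p\omega(r)\,dr=|f(0)|^p\widehat{\omega}(0)+\int_{[0,1)}\widehat{\omega}\,dM_p(f,\cdot)^p$. That is the right strategy, and Step 2 is correct; in fact you can avoid the boundary-term/dilation discussion entirely by writing $M_p(f,r)^p=|f(0)|^p+\int_0^r dM_p(f,t)^p$ and applying Tonelli, since $\widehat{\omega}(1^-)=0$ and all quantities are nonnegative, so the identity holds in $[0,\infty]$ for every analytic $f$.

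There is, however, a concrete slip in exactly the place you flagged: the iteration index in Step 1. Iterating reverse doubling $k$ times gives $\widehat{\omega}\bigl(1-(1-r_n)C_2^{-k}\bigr)\le C_1^{-k}\widehat{\omega}(r_n)$, and with your choice of $k$ (smallest with $C_2^k\ge 2$) the point $s_k=1-(1-r_n)C_2^{-k}$ satisfies $s_k\ge r_{n+1}$. Since $\widehat{\omega}$ is decreasing this yields $\widehat{\omega}(r_{n+1})\ge\widehat{\omega}(s_k)$, which is the wrong direction: it does not bound $\widehat{\omega}(r_{n+1})$ above, so the claimed geometric decay $\widehat{\omega}(r_n)\lesssim\lambda^n\widehat{\omega}(r)$ does not follow as written. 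The repair is standard: either use the grid $r_n=1-C_2^{-n}(1-r)$ adapted to the reverse-doubling scale (so that one step of the grid is exactly one application of reverse doubling, at the cost of a factor $\log C_2$ in the integral estimate), or compare $r_n$ with $r_{n+k}$ where $2^k\ge C_2$ (so that $r_{n+k}\ge 1-(1-r_n)/C_2$ and monotonicity now works in your favor), or first derive the power-type decay $\widehat{\omega}(s)\lesssim\bigl(\tfrac{1-s}{1-r}\bigr)^{\varrho_1}\widehat{\omega}(r)$ for $r\le s$ with $\varrho_1=\log C_1/\log C_2$ — this is precisely \eqref{b-t-eq12} in the paper — and sum $\widehat{\omega}(r_n)\lesssim 2^{-n\varrho_1}\widehat{\omega}(r)$. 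With any of these fixes the proof is complete.
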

Denote by $B(z,r)$ and $D(z,r)$ the Euclidean disk and the pseudo-hyperbolic disk centered at $z$ with radius $r$ respectively. The next result shows a nice property of regular weights, which can be found in page 8 of \cite{PR}.
\begin{lemma}\label{regular-lemma} Let $\omega$ be a regular weight.
Then for any $z\in\mathbb{D}$ and $r\in (0,1)$,
\[
\omega(z)\asymp\omega(u),\quad\textmd{for~all}~u\in D(z,r).
\]
\end{lemma}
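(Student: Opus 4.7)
The plan is to reduce this to a scalar comparison on the radii. Since $\omega$ is radial, $\omega(z)=\omega(|z|)$, so the claim amounts to showing $\omega(|z|)\asymp\omega(|u|)$ for every $u\in D(z,r)$. The first step is the standard observation that on a pseudo-hyperbolic disk of radius $r<1$ one has $1-|u|\asymp 1-|z|$, with the implicit constants depending only on $r$. This already handles all factors of the form $(1-s)$ in the subsequent estimates.

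The second step uses the defining property of regularity: $\omega(s)(1-s)\asymp\widehat{\omega}(s)$ for $s\in[0,1)$. Combined with the first step, the claim reduces to showing $\widehat{\omega}(|z|)\asymp\widehat{\omega}(|u|)$, because the ratio $(1-|u|)/(1-|z|)$ is under control. To obtain this, I would exploit the inclusion $\mathcal{R}\subsetneq\mathcal{D}$ mentioned in the excerpt: a regular weight satisfies both the forward and reverse doubling inequalities on $\widehat{\omega}$. Since $1-|u|$ and $1-|z|$ are within a bounded multiplicative factor, I would cover the interval between $|z|$ and $|u|$ by a bounded number of half-steps of the form $s\mapsto (1+s)/2$ (or its reverse-doubling analogue $s\mapsto 1-(1-s)/C_2$) and apply the doubling conditions a controlled number of times. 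This yields $\widehat{\omega}(|u|)\asymp\widehat{\omega}(|z|)$, and together with the previous two reductions gives the conclusion $\omega(|u|)\asymp\omega(|z|)$.

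The only real obstacle is bookkeeping: all constants depend on $r$, and one must track this dependence through the iterated applications of the (reverse) doubling inequalities. Conceptually, however, the argument is a straightforward consequence of the characterization of regularity via $\widehat{\omega}$ and the geometric fact that pseudo-hyperbolic disks have comparable distance to the boundary at all interior points; this is consistent with the statement being cited in the excerpt as a known property from \cite{PR}.
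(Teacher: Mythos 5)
Your argument is correct. The paper itself gives no proof of this lemma --- it simply cites page 8 of [PR14] --- so any self-contained derivation is a bonus, and yours is sound: radiality reduces the claim to comparing $\omega(|u|)$ with $\omega(|z|)$; the standard estimate $1-|u|\asymp 1-|z|$ on $D(z,r)$ (quoted elsewhere in the paper from \cite[Proposition 4.5]{zhu2007}) controls the $(1-s)$ factors; regularity converts the problem into $\widehat{\omega}(|u|)\asymp\widehat{\omega}(|z|)$; and the iterated doubling step finishes it. Two small remarks. First, you only need the forward doubling inequality together with the monotonicity of $\widehat{\omega}$: assuming $|z|\le|u|$, monotonicity gives $\widehat{\omega}(|u|)\le\widehat{\omega}(|z|)$ for free, and $n$ applications of $\widehat{\omega}(s)\le C_3\widehat{\omega}(\frac{1+s}{2})$ with $2^{-n}\le c_r$ give $\widehat{\omega}(|z|)\le C_3^{\,n}\widehat{\omega}\bigl(1-2^{-n}(1-|z|)\bigr)\le C_3^{\,n}\widehat{\omega}(|u|)$; the opposite case follows by symmetry of the pseudo-hyperbolic metric, so the reverse doubling condition is never used. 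Second, there is an even more direct route that avoids invoking the inclusion $\mathcal{R}\subset\mathcal{D}$: since $-\frac{d}{ds}\log\widehat{\omega}(s)=\omega(s)/\widehat{\omega}(s)\asymp(1-s)^{-1}$, integrating between $|z|$ and $|u|$ bounds $\log\bigl(\widehat{\omega}(|z|)/\widehat{\omega}(|u|)\bigr)$ by a constant multiple of $\bigl|\log\frac{1-|z|}{1-|u|}\bigr|$, which is bounded in terms of $r$; this is essentially how the cited source proceeds. Either way the conclusion and the constant dependence (on $r$ and $\omega$ only) are as required.
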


Combine Lemma \ref{prop-reverse} with Lemma \ref{regular-lemma}, we get the following result.
\begin{lemma}\label{D-regular} Let $\omega\in\mathcal{D}$ and $r\in(0,1)$. Then for any $z\in\mathbb{D}$,
\[
\widehat{\omega}(z)\asymp\widehat{\omega}(u),\quad\textmd{for~all}~u\in D(z,r).
\]
\end{lemma}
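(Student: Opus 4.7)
The plan is to chain together the two preceding lemmas via the auxiliary regular weight $\kappa$ introduced in Lemma~\ref{prop-reverse}. Concretely, I would first invoke Lemma~\ref{prop-reverse}: since $\omega\in\mathcal{D}$, the weight
\[
\kappa(z)=\frac{\widehat{\omega}(z)}{1-|z|}
\]
is regular on $\mathbb{D}$. Then I would apply Lemma~\ref{regular-lemma} to $\kappa$ in place of $\omega$, which yields
\[
\kappa(z)\asymp\kappa(u),\qquad u\in D(z,r),
\]
with constants depending only on $r$.

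Next, I would use the standard and well-known fact that on a pseudo-hyperbolic disk $D(z,r)$ of fixed radius $r\in(0,1)$ one has $1-|u|\asymp 1-|z|$ for every $u\in D(z,r)$, with constants depending only on $r$ (this follows directly from the identity $1-|\varphi_z(u)|^2=(1-|z|^2)(1-|u|^2)/|1-\bar z u|^2$ and the bound $|\varphi_z(u)|<r$). Multiplying the estimate $\kappa(z)\asymp\kappa(u)$ by the comparable quantities $1-|z|\asymp 1-|u|$ gives
\[
\widehat{\omega}(z) = (1-|z|)\,\kappa(z) \asymp (1-|u|)\,\kappa(u) = \widehat{\omega}(u),
\]
as desired.

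There is no real obstacle here: the statement is essentially a transfer of Lemma~\ref{regular-lemma} from the class $\mathcal{R}$ to the primitive $\widehat{\omega}$ of an arbitrary $\mathcal{D}$ weight, and the mechanism for the transfer is precisely that $\widehat{\omega}(z)/(1-|z|)$ lies in $\mathcal{R}$ by Lemma~\ref{prop-reverse}. The only minor care needed is to track that all comparison constants depend only on $r$ and on the doubling constants of $\omega$, not on the point $z$, which is automatic from the corresponding uniformity in Lemmas~\ref{prop-reverse} and~\ref{regular-lemma}.
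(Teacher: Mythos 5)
Your proposal is correct and is essentially the paper's own argument: the paper simply states that the lemma follows by combining Lemma~\ref{prop-reverse} (regularity of $\kappa(z)=\widehat{\omega}(z)/(1-|z|)$) with Lemma~\ref{regular-lemma}, and you have supplied exactly that chain, including the standard fact $1-|u|\asymp 1-|z|$ on $D(z,r)$ needed to pass from $\kappa$ back to $\widehat{\omega}$.
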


Now we are ready to prove Proposition \ref{t-c-1}.

\begin{proof}[Proof of Proposition \ref{t-c-1}] Sufficiency. Let $\{f_{j}\}_{j=1}^{\infty}$ be any sequence in $L_{a}^{p}(\omega)$ such that $\|f_{j}\|_{L_{a}^{p}(\omega)}\leq 1$ and $f_{j}$ uniformly converges to 0 on any compact subset of $\mathbb{D}$. If $g\in\mathcal{B}_{0}^{\gamma,\widehat{\omega}^{1/p}}$, then we will show
\begin{equation*}\label{eq82}
\lim\limits_{j\to\infty}\|J_{g}(f_{j})\|_{H^{q}}=0.
\end{equation*}
For any $\varepsilon>0$, there exists $r\in(0,1)$ such that
for any $z\in\mathbb{D}$ with $r\leq |z|<1$, it holds that
\begin{equation}\label{p1-1}
\frac{(1-|z|^{2})^{\gamma}}{\widehat{\omega}(z)^{1/p}}|g'(z)|<\varepsilon.
\end{equation}
Moreover, we can find a constant $k\in\mathbb{N}$ such that for any $j>k$, $|f_{j}|<\varepsilon$ on $r\mathbb{D}:=\{z:|z|<r\}$.
By Lemma \ref{le8}, for any $j\in\mathbb{N}$, we obtain
\[
\|J_{g}(f_{j})\|_{H^{q}}\lesssim
\|(J_{g}f_{j})'\|_{L_{a}^{p}(\omega_{p\gamma-1})}.
\]
Together with Lemma \ref{prop-reverse} and \eqref{p1-1}, for any $j>k$, we get
\begin{align*}
\|J_{g}(f_{j})\|_{H^{q}}
&\lesssim\bigg(\int_{r\mathbb{D}}|f_{j}(z)|^{p}\frac{\widehat{\omega}(z)}{1-|z|}
|g'(z)|^{p}\frac{(1-|z|^{2})^{p\gamma}}{\widehat{\omega}(z)}dA(z)\bigg)^{\frac{1}{p}}\nonumber\\
&+\bigg(\int_{\mathbb{D}\backslash r\mathbb{D}}|f_{j}(z)|^{p}\frac{\widehat{\omega}(z)}{1-|z|}
|g'(z)|^{p}\frac{(1-|z|^{2})^{p\gamma}}{\widehat{\omega}(z)}dA(z)\bigg)^{\frac{1}{p}}\nonumber\\
&\lesssim \varepsilon+\varepsilon\|f_{j}\|_{L_{a}^{p}(\omega)}\lesssim\varepsilon.
\end{align*}

Necessity. Assume that $J_{g}:L_{a}^{p}(\omega)\to H^{q}$ is compact.
Let $\alpha>\frac{1}{p}$ and $\lambda\in\mathbb{D}$. Take testing functions
\[
f_{\lambda}(z)=\frac{(K_{\lambda}^{\omega}(z))^{\alpha}}{\|K_{\lambda}^{\omega}\|_{L_{a}^{\alpha+1-\frac{1}{p}}(\omega)}^{\alpha+1-\frac{1}{p}}},
\quad z\in\mathbb{D}.
\]
Let $\omega_{2n+1}=\int_{0}^{1}\omega(r)r^{2n+1}dr,~n\in\mathbb{N}\bigcup\{0\}$. It follows from \cite[formula (20)]{PR1} that
\[
|K_{\lambda}^{\omega}(z)|\leq \sum_{n=0}^{\infty}\frac{|z|^{n}}{2\omega_{2n+1}}\asymp\frac{1}{\omega_{1}}+\int_{0}^{|z|}\frac{1}{\widehat{\omega}(t)(1-t)^{2}}dt\lesssim
\frac{1}{\widehat{\omega}(z)(1-|z|)},\quad |z|\to1^{-}.
\]
Together with \eqref{reproducing-formula}, we obtain
\[
|f_{\lambda}(z)|\asymp\widehat{\omega}(\lambda)^{\alpha-\frac{1}{p}}(1-|\lambda|)^{\alpha-\frac{1}{p}}|K_{\lambda}^{\omega}(z)|^{\alpha}
\lesssim\frac{\widehat{\omega}(\lambda)^{\alpha-\frac{1}{p}}(1-|\lambda|)^{\alpha-\frac{1}{p}}}{\widehat{\omega}(z)^{\alpha}(1-|z|)^{\alpha}},
\quad z\in\mathbb{D}.
\]
Then for any compact subset $E\subseteq\mathbb{D}$, we have
\begin{equation*}\label{eqc42}
\lim\limits_{|\lambda|\to1^{-}}\sup\limits_{z\in E}|f_{\lambda}(z)|=0.
\end{equation*}
Moreover, from \eqref{reproducing-formula}, we know
\begin{equation*}\label{eq42}
\|f_{\lambda}\|_{L_{a}^{p}(\omega)}\asymp \widehat{\omega}(\lambda)^{\alpha-\frac{1}{p}}(1-\lambda)^{\alpha-\frac{1}{p}}
\bigg(\int_{\mathbb{D}}|K_{\lambda}^{\omega}(z)|^{\alpha p}\omega(z)dA(z)\bigg)^{\frac{1}{p}}
\asymp1.
\end{equation*}
Therefore, we deduce
\[
\lim\limits_{|\lambda|\to1^{-}}\|J_{g}(f_{\lambda})\|_{H^{q}}=0.
\]
For any $z,\lambda\in\mathbb{D}$,
by using a classical result of the pointwise estimate for $(J_{g}f_{\lambda})'$ (see \cite[Page 85-86]{Ga}), we get
\begin{align}\label{eq41}
|f_{\lambda}(z)g'(z)|
=|(J_{g}f_{\lambda})'(z)|
\lesssim\frac{1}{(1-|z|^{2})^{1+\frac{1}{q}}}\|J_{g}f_{\lambda}\|_{H^{q}}.
\end{align}
By choosing $z=\lambda$ in \eqref{eq41}, together with \eqref{reproducing-formula}, we deduce
\[
\frac{(1-|\lambda|^{2})^{\gamma}}{\widehat{\omega}(\lambda)^{\frac{1}{p}}}|g'(\lambda)|
\lesssim\|J_{g}f_{\lambda}\|_{H^{q}},
\]
from which it follows that
\[
\lim\limits_{|\lambda|\to1^{-}}\frac{(1-|\lambda|^{2})^{\gamma}}{\widehat{\omega}(\lambda)^{\frac{1}{p}}}|g'(\lambda)|=0.
\]
Thus, we complete the proof.
\end{proof}

\subsection{Case B} Now we turn to deal with the case $2<p=q<\infty$.

For $k\in\mathbb{N}$, recall the Rademacher functions
$r_{k}(t):=\textmd{sgn}\{\sin(2^{k}\pi t)\},~t\in[0,1].$ The following Khinchine's inequality and Kahane's inequality \cite{Luecking-1993} are useful in our subsequent work.
\begin{lemma} \label{le-Khinchine} Let $0<p<\infty$. Then for any sequence of complex numbers $\{c_{k}\}_{k=1}^{\infty}$ , it holds that
\[
\bigg(\sum_{k=1}^{\infty}|c_{k}|^{2}\bigg)^{\frac{p}{2}}\asymp\int_{0}^{1}\bigg|\sum_{k=1}^{\infty}c_{k}r_{k}(t)\bigg|^{p}dt.
\]
\end{lemma}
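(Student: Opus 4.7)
The plan is to prove Khinchine's inequality by reduction to the subgaussian tail bound for a Rademacher sum, and then extract both directions of $\asymp$ by Hölder-type interpolation between $L^2$ and a large Lebesgue exponent. By homogeneity I may assume $\sum |c_k|^2 = 1$, and by a standard truncation/monotone convergence argument it suffices to treat a finite sum $S(t) := \sum_{k=1}^N c_k r_k(t)$. The goal is then $\int_0^1 |S(t)|^p\,dt \asymp 1$, with constants depending only on $p$. Throughout I will use the orthogonality identity $\int_0^1 r_j(t) r_k(t)\,dt = \delta_{jk}$, which together with bilinearity gives $\int_0^1 |S(t)|^2\,dt = \sum_k |c_k|^2 = 1$.

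The heart of the argument is the subgaussian bound. Treating the real and imaginary parts of $S$ separately, I may assume $c_k \in \R$. Independence of $\{r_k\}$ yields
\[
\int_0^1 e^{\lambda S(t)}\,dt = \prod_{k=1}^N \cosh(\lambda c_k) \leq \prod_{k=1}^N e^{\lambda^2 c_k^2 / 2} = e^{\lambda^2/2},
\]
where the middle inequality $\cosh(x) \leq e^{x^2/2}$ comes from comparing Taylor coefficients. Markov's inequality applied to $e^{\lambda S}$ and $e^{-\lambda S}$, followed by optimization in $\lambda > 0$, produces the subgaussian tail estimate $|\{t \in [0,1] : |S(t)| > s\}| \leq 2 e^{-s^2/2}$ for every $s > 0$. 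Integrating this tail through the layer-cake formula
\[
\int_0^1 |S(t)|^p\,dt = p\int_0^\infty s^{p-1}\,|\{t : |S(t)| > s\}|\,ds
\]
immediately gives the uniform upper bound $\int_0^1 |S(t)|^p\,dt \leq C_p$ for every $0 < p < \infty$.

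For the lower bound, split into two subcases. When $p \geq 2$, a direct application of Hölder (or Jensen) to the $L^2$ identity gives
\[
1 = \int_0^1 |S(t)|^2\,dt \leq \Bigl(\int_0^1 |S(t)|^p\,dt\Bigr)^{2/p},
\]
so $\int_0^1 |S|^p \geq 1$. When $0 < p < 2$, I interpolate between $L^p$ and $L^4$: choose $\theta \in (0,1)$ with $\frac{1}{2} = \frac{\theta}{p} + \frac{1-\theta}{4}$ and apply the three-line form of Hölder to obtain $\|S\|_2 \leq \|S\|_p^{\theta}\,\|S\|_4^{1-\theta}$. The previous step supplies $\|S\|_4 \leq C$, and $\|S\|_2 = 1$, so $\|S\|_p^\theta \geq 1/C^{1-\theta}$, which is the desired positive lower bound. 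Finally, the upper bound $\int_0^1|S|^p \leq \|c\|_2^p$ for $0<p<2$ can alternatively be recovered from Jensen applied to the $L^2$ identity, which matches the constants cleanly.

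No real obstacle is anticipated: the entire argument rests on the elementary inequality $\cosh(x) \leq e^{x^2/2}$ plus independence. The only mild care needed is to pass from the finite-sum formulation to the infinite one in the statement; this is done by observing that both sides are monotone in $N$ for partial sums (after fixing a truncation of $\{c_k\}$) and invoking monotone convergence, together with the fact that $S_N \to S$ in $L^2$ by the $L^2$ identity, hence along a subsequence almost everywhere.
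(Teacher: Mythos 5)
Your argument is correct, but note that the paper does not prove this lemma at all: it is quoted as a known result with a citation to Luecking's paper \cite{Luecking-1993}, so there is no in-paper proof to compare against. What you have written is the standard self-contained proof of Khinchine's inequality: the exponential-moment bound $\int_0^1 e^{\lambda S}\,dt=\prod_k\cosh(\lambda c_k)\le e^{\lambda^2\|c\|_2^2/2}$ via $\cosh x\le e^{x^2/2}$, Chernoff/Markov to get the subgaussian tail, layer-cake for the upper bound at every $p$, Jensen for the lower bound when $p\ge 2$, and log-convexity of $L^p$ norms (Hölder between $L^p$ and $L^4$) to bootstrap the lower bound for $0<p<2$ from the $L^2$ identity and the $L^4$ upper bound. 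All of these steps check out, including the exponent bookkeeping in the interpolation step ($\tfrac12=\tfrac{\theta}{p}+\tfrac{1-\theta}{4}$ does yield admissible Hölder exponents).

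The only loose point is the passage from finite to infinite sums. Your claim that $\int_0^1|S_N|^p\,dt$ is monotone in $N$ is justified by conditional Jensen only for $p\ge 1$; for $0<p<1$ the map $x\mapsto|x|^p$ is not convex and monotonicity is not clear. This is easily repaired, and in a way you essentially already indicate: when $\sum_k|c_k|^2<\infty$, the partial sums $S_N$ converge in $L^2$ and (along a subsequence, or by martingale convergence) almost everywhere to $S$; Fatou applied to $e^{\lambda S_N}\ge 0$ transfers the exponential-moment bound, hence the subgaussian tail and the $L^4$ bound, directly to $S$, and $\|S\|_2=\|c\|_{\ell^2}$ exactly by $L^2$ convergence. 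Running your finite-$N$ argument verbatim on the limit function $S$ then gives both bounds without any appeal to monotonicity in $N$. With that adjustment the proof is complete.
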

\begin{lemma} \label{le-Kahane} Let $0<p,q<\infty$ and $X$ be a quasi-Banach space with quasi-norm $\|\cdot\|_{X}$. For any sequence $\{x_{k}\}_{k=1}^{\infty}\subseteq X$, it holds that
\[
\bigg(\int_{0}^{1}\big\|\sum_{k=1}^{\infty}r_{k}(t)x_{k}\big\|_{X}^{q}dt\bigg)^{\frac{1}{q}}\asymp
\bigg(\int_{0}^{1}\big\|\sum_{k=1}^{\infty}r_{k}(t)x_{k}\big\|_{X}^{p}dt\bigg)^{\frac{1}{p}}.
\]
\end{lemma}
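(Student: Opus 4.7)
The statement is the vector-valued Kahane--Khintchine inequality for Rademacher averages in a quasi-Banach space $X$: all $L^p([0,1])$ quasi-norms of the random variable $t \mapsto \|\sum_{k=1}^{\infty} r_k(t) x_k\|_X$ are mutually equivalent. My plan is to establish exponentially decaying tails for this random variable, from which equivalence over the full range $0<p,q<\infty$ is immediate.

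As a preliminary reduction, I would invoke the Aoki--Rolewicz theorem to replace $\|\cdot\|_X$ by an equivalent $s$-norm for some $s\in(0,1]$, i.e.\ $\|x+y\|_X^s\le \|x\|_X^s+\|y\|_X^s$; this restores enough subadditivity for the usual symmetry-and-independence arguments to survive the failure of the triangle inequality. It also suffices to treat finite sums, by a monotone approximation and the convergence assumed in the statement.

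The main step is a doubling estimate for the tail $F(u):=|\{t\in[0,1]:\|S(t)\|_X>u\}|$, where $S(t)=\sum_k r_k(t)x_k$. Partitioning the indices into two independent blocks and using that each block is a symmetric random vector, L\'evy's maximal inequality lets me bound the event $\{\|S\|_X>2u\}$ by the intersection of two independent events of measure at most $F(u/C_s)$, yielding $F(2u)\lesssim F(u)^2$ whenever $u$ exceeds a threshold $u_0\asymp \|S\|_{L^s}$. Iterating this doubling relation yields $F(u)\lesssim e^{-cu/u_0}$ for $u\ge u_0$.

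Given the exponential tail bound, equivalence of $L^p$ and $L^q$ norms is a routine computation: for $p\le q$, Jensen/H\"older on the probability space $[0,1]$ gives $\|\cdot\|_{L^p}\le \|\cdot\|_{L^q}$, while in the reverse direction,
\[
\mathbb{E}\|S\|_X^q = \int_0^\infty q u^{q-1} F(u)\,du \lesssim u_0^q + \int_{u_0}^\infty q u^{q-1} e^{-cu/u_0}\,du \lesssim u_0^q \asymp \|S\|_{L^p}^q.
\]
The main obstacle is carrying out the doubling step cleanly in the quasi-Banach regime: the usual proof uses additivity of the norm in a way that simply fails for a general $X$, and the Aoki--Rolewicz reduction to an $s$-norm is precisely the device that brings the argument back into the symmetric-independent framework where L\'evy-type maximal inequalities apply.
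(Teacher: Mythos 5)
Your outline is correct, but note that the paper does not prove this lemma at all: it is quoted as a known result (Kahane's inequality for quasi-Banach spaces, cited to Luecking's 1993 paper, where it is in turn traced back to Kalton). So there is no ``paper proof'' to compare against; what you have written is essentially the standard textbook argument, and it is the right one. The two genuinely delicate points are exactly where you place them. First, the Aoki--Rolewicz renorming to an $s$-norm is what makes L\'evy's maximal inequality survive: the reflection identity $2S_k=S_n+S_n^{\mathrm{ref}}$ only gives $\|S_k\|\le 2^{1/s-1}\max(\|S_n\|,\|S_n^{\mathrm{ref}}\|)$, so L\'evy and the subsequent doubling inequality hold with constants depending on $s$ --- your $F(2u)\lesssim F(u/C_s)^2$ is the correct shape. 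Second, in the stopping-time version of the doubling step one must absorb the single jump term $x_\tau$ at the stopping time (e.g.\ via $\sup_k\|x_k\|\lesssim u_0$, itself a consequence of L\'evy/Chebyshev at level $u_0\asymp\|S\|_{L^s}$); you should make that absorption explicit, but it is routine. The threshold choice $u_0\asymp\|S\|_{L^s}$ via Chebyshev, the iteration to $F(u)\lesssim e^{-cu/u_0}$, and the layer-cake computation of all moments are all standard and correct, and the reduction to finite sums is legitimate since the quantities on both sides are limits of the finite truncations. In short: the proof is sound as a blueprint, whereas the paper simply defers to the literature.
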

We first generalize Theorem 1 in \cite{Wu-Scichina} to $\mathcal{D}$ weighted Bergman spaces.
\begin{proposition}\label{vanishing Carleson} Let $\omega\in\mathcal{D}$, $\kappa(z)=\frac{\widehat{\omega}(z)}{1-|z|}$ and $\mu$ be a positive measure on $\mathbb{D}$. Let $p\in(0,+\infty)$, $ s\in[1,+\infty)$ and $\iota\in(0,1)$. Then the following statements are equivalent:

\item[(i)] $\big(\frac{\mu(D(z,\tilde{\delta}))}{\kappa(D(z,\tilde{\delta}))}\big)^{\frac{1}{1-\iota}}\kappa(z)dA(z)$ is a vanishing $s$-Carleson measure, for some $\tilde{\delta}\in(0,1)$;

\item[(ii)]For any $f\in L_{a}^{p}(\omega)$, $|f(z)|^{p\iota}d\mu(z)$ is a vanishing $s(1-\iota)$-Carleson measure, that is, for any $f\in L_{a}^{p}(\omega)$ and any $\varepsilon>0$, there exists $r_{0}\in(0,1)$ such that for any $I\subseteq \mathbb{T}$ with $0<|I|\leq r_{0}$, it holds that
\[
\frac{\int_{S(I)}|f(z)|^{p\iota}d\mu(z)}{|I|^{s(1-\iota)}}\lesssim\varepsilon\|f\|_{L_{a}^{p}(\omega)}^{p\iota},
\]
where the constant of comparison does not depend on the choice of the function $f$.
\end{proposition}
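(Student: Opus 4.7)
The plan is to prove the two implications separately, in both directions reducing to the regular weight $\kappa(z)=\widehat{\omega}(z)/(1-|z|)$ via Lemma \ref{prop-reverse}, so that $\kappa$-averages over pseudo-hyperbolic disks are well behaved (Lemma \ref{regular-lemma} and Lemma \ref{D-regular}).

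For the implication (i)$\Rightarrow$(ii), I would start from the subharmonic mean-value inequality for $|f|^{p}$ on a pseudo-hyperbolic disk, which in view of Lemma \ref{D-regular} takes the form
\[
|f(z)|^{p}\lesssim \frac{1}{\kappa(D(z,\tilde{\delta}))}\int_{D(z,\tilde{\delta})}|f(u)|^{p}\kappa(u)\,dA(u).
\]
For a fixed arc $I\subseteq \mathbb{T}$, cover $S(I)$ by a finitely overlapping family $\{D(z_{k},\tilde{\delta})\}$ of pseudo-hyperbolic disks with centers $z_{k}\in S(I)$, so that
\[
\int_{S(I)}|f|^{p\iota}d\mu \lesssim \sum_{k}\bigg(\frac{1}{\kappa(D(z_{k},\tilde{\delta}))}\int_{D(z_{k},2\tilde{\delta})}|f|^{p}\kappa\,dA\bigg)^{\iota}\mu(D(z_{k},\tilde{\delta})).
\]
Then I would apply the discrete H\"older inequality in the $k$-sum with conjugate exponents $1/\iota$ and $1/(1-\iota)$ to separate the two factors. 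The first factor telescopes, thanks to the finite overlap, to $\|f\|_{L^{p}(\kappa)}^{p\iota}\asymp \|f\|_{L_{a}^{p}(\omega)}^{p\iota}$. For the second, the identity $\mu(D)^{1/(1-\iota)}\kappa(D)^{-\iota/(1-\iota)}=(\mu(D)/\kappa(D))^{1/(1-\iota)}\kappa(D)$ and the comparison $\kappa(D(z_{k},\tilde{\delta}))\asymp \int_{D(z_{k},\tilde{\delta})}\kappa\,dA$ (regularity of $\kappa$) reassemble the sum into the integral of $(\mu(D(z,\tilde{\delta}))/\kappa(D(z,\tilde{\delta})))^{1/(1-\iota)}\kappa(z)\,dA(z)$ over a bounded enlargement of $S(I)$, which by (i) is at most $\varepsilon|I|^{s}$ once $|I|$ is small. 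Raising to the $(1-\iota)$-th power delivers the vanishing $s(1-\iota)$-Carleson bound in (ii).

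For (ii)$\Rightarrow$(i), I would use a test family. For $a\in\mathbb{D}$ with $|a|$ close to $1$, construct $f_{a}\in L_{a}^{p}(\omega)$ satisfying $\|f_{a}\|_{L_{a}^{p}(\omega)}\lesssim 1$ and $|f_{a}(z)|\gtrsim (\widehat{\omega}(a)(1-|a|))^{-1/p}$ for every $z\in D(a,\tilde{\delta})$. Such a family can be produced as a normalized power of the Bergman reproducing kernel, exactly as in the proof of Proposition \ref{t-c-1}, using Lemma \ref{reproducing} and \eqref{reproducing-formula} for the normalization and a Harnack-type argument on the pseudo-hyperbolic disk combined with Lemma \ref{D-regular} for the pointwise lower bound. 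Pick an arc $I_{a}$ with $a\in S(I_{a})$ and $|I_{a}|\asymp 1-|a|$, enlarged by a fixed factor so that $D(a,\tilde{\delta})\subseteq S(I_{a})$, and apply (ii) to $f_{a}$ on $S(I_{a})$. After rearrangement, raising to the power $1/(1-\iota)$, and multiplying by $\kappa(a)=\widehat{\omega}(a)/(1-|a|)$, one obtains the pointwise bound
\[
\bigg(\frac{\mu(D(a,\tilde{\delta}))}{\kappa(D(a,\tilde{\delta}))}\bigg)^{\frac{1}{1-\iota}}\kappa(a)\lesssim \varepsilon^{\frac{1}{1-\iota}}(1-|a|)^{s-2}.
\]
Integrating over $S(I)$ and using $\int_{S(I)}(1-|z|)^{s-2}\,dA(z)\asymp |I|^{s}$ yields (i).

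The main obstacle will be the construction of the test functions $f_{a}$ with the sharp pointwise lower bound on $D(a,\tilde{\delta})$: unlike the standard-weight case, no explicit formula for $K_{a}^{\omega}$ is available, only size estimates (Lemma \ref{reproducing}), and in principle $K_{a}^{\omega}$ could have zeros inside $D(a,\tilde{\delta})$. The Harnack-type argument above avoids this issue for $\tilde{\delta}$ small enough, but a safer backup is to replace $K_{a}^{\omega}$ with an explicit analytic family of the form $f_{a}(z)=(\widehat{\omega}(a)(1-|a|))^{-1/p}\phi_{N}(\bar{a}z)$ for a well chosen $\phi_{N}$, verifying the norm bound through Lemma \ref{prop-reverse} and the lower bound by direct inspection.
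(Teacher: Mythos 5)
Your direction (i)$\Rightarrow$(ii) is essentially the paper's argument in discretized form: the paper applies subharmonicity to $|f|^{p\iota}$, swaps the order of integration by Fubini, and then uses the integral H\"older inequality with exponents $\tfrac1\iota$ and $\tfrac1{1-\iota}$, whereas you cover $S(I)$ by finitely overlapping pseudo-hyperbolic disks and use the discrete H\"older inequality. Both work; note only that your reassembly step replaces $\mu(D(z_k,\tilde\delta))$ by $\mu(D(z,\tilde\delta''))$ for a slightly larger radius $\tilde\delta''$, so you either need the (standard, but unproven in your sketch) fact that condition (i) is radius-independent, or you should mimic the paper's Fubini computation, which keeps the same radius.

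The direction (ii)$\Rightarrow$(i) has a genuine gap. Testing (ii) against a single localized function $f_a$ with $\|f_a\|_{L^p_a(\omega)}\lesssim1$ and $|f_a|\gtrsim(\widehat\omega(a)(1-|a|))^{-1/p}$ on $D(a,\tilde\delta)$ yields exactly the pointwise bound you state,
\[
\Big(\tfrac{\mu(D(a,\tilde\delta))}{\kappa(D(a,\tilde\delta))}\Big)^{\frac{1}{1-\iota}}\kappa(a)\lesssim\varepsilon^{\frac{1}{1-\iota}}(1-|a|)^{s-2},
\]
but integrating this over $S(I)$ gives $\int_{S(I)}(1-|z|)^{s-2}\,dA(z)\asymp|I|\int_{1-|I|}^{1}(1-r)^{s-2}\,dr$, which is comparable to $|I|^{s}$ only when $s>1$ and \emph{diverges} when $s=1$. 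Since the proposition is stated for $s\in[1,\infty)$ and its only application in the paper (the necessity part of Proposition \ref{t-c-2}) uses precisely $s=1$, the pointwise route cannot close the argument. The information you are discarding is the uniformity of (ii) over all $f$ simultaneously: the paper (following Luecking) tests (ii) against random combinations $\sum_{j,l}\lambda_{j,l}r_{j,l}(t)f_{j,l}$ built from the atomic decomposition over a lattice, applies Khinchine's inequality to obtain, for every $\{\lambda_{j,l}\}\in\ell^{p}$, a bound on $\sum_{j,l}|\lambda_{j,l}|^{p\iota}\,\mu(D(z_{j,l},2\tilde\delta))/(\widehat\omega(z_{j,l})(1-|z_{j,l}|))^{\iota}$, and then invokes the duality $(\ell^{1/\iota})^{*}\simeq\ell^{1/(1-\iota)}$ to convert this $\ell^{p}$-uniform estimate into the $\ell^{1/(1-\iota)}$-summability of the normalized quantities $\mu(D(z_{j,l},2\tilde\delta))/(\widehat\omega(z_{j,l})(1-|z_{j,l}|))^{\iota}$ over the lattice points in $\wedge_\alpha(I)$, with the correct bound $\varepsilon|I|^{s(1-\iota)}$. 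That summed estimate is exactly the Carleson condition (i) after converting the lattice sum back into an integral, and it is valid for all $s\geq1$. Without this Khinchine-plus-duality step your proof only covers $s>1$. (Your secondary worry about zeros of $K_a^\omega$ is the lesser issue: the explicit atoms $f_{j,l}(z)=(1-|z_{j,l}|)^{M-1/p}\widehat\omega(z_{j,l})^{-1/p}(1-\overline{z_{j,l}}z)^{-M}$ used in the atomic decomposition have the required two-sided size on $D(z_{j,l},2\tilde\delta)$ by inspection, so your proposed backup is the right fix there.)
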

Before approaching the proof of Proposition \ref{vanishing Carleson}, we first give some notations.
For $0<\alpha<\infty$ and $\xi\in\mathbb{T}$, we define the arc on $\mathbb{T}$ as follows,
\[
I_{\alpha}(z)=\bigg\{\xi\in\mathbb{T}:z\in\Gamma_{\alpha}(\xi)\bigg\},\quad z\in\mathbb{D}.
\]
Note that
\begin{equation}\label{eq2}
|I_{\alpha}(z)|\asymp1-|z|,\quad z\in\mathbb{D},
\end{equation}
see \cite[formula (3.3)]{Wu-Scichina}. We denote $I_{\alpha}(z)$ by $I(z)$ if $\alpha=1$.
For $I\subseteq\mathbb{T}$, denote by the \textit{$\alpha$-tent of $I$}
\[
\wedge_{\alpha}(I)=\bigg\{z\in\mathbb{D}:I_{\alpha}(z)\subseteq I\bigg\}.
\]

First observe the relationship between the Carleson square and the $\alpha$-tent of an arc on $\mathbb{T}$.
For any fixed $\alpha>0$, for any $I\subseteq\mathbb{T}$, there exist $I_{1},I_{2}\subseteq\mathbb{T}$ such that
$S(I_{1})\subseteq\wedge_{\alpha}(I)\subseteq S(I_{2})$ and $|I_{1}|\asymp|I|\asymp|I_{2}|$. This makes us to choose $S(I)$ or $\wedge_{\alpha}(I)$ freely
in the definition of vanishing Carleson measure.

\vskip 0.1in
Next we recall some important observations over geometrical regions by Wu in \cite{Wu-Scichina}.
\begin{lemma}\label{o-g} \cite[Lemma 2.2 and Lemma 3.1]{Wu-Scichina} Let $\alpha>0$, $\xi\in\mathbb{T}$ and $I\subseteq\mathbb{T}$.
Let $\tilde{t}=\tanh t$ and $\alpha_{t}=(\alpha+2)e^{2t}-2$ for $t\geq 0$. Then
\[
D(z,\tilde{t})\subseteq\Gamma_{\alpha_{t}}(\xi),\quad \textmd{for~any}~z\in\Gamma_{\alpha}(\xi),
\]
\[
D(z,\tilde{t})\subseteq\wedge_{\alpha}(I), \quad\textmd{for}~z\in\wedge_{\alpha_{t}}(I)
\]
and
\[
D(z,\tilde{t})\subseteq\wedge_{\alpha}(I), \quad\textmd{for}~D(z,\tilde{t})\cap\wedge_{\alpha_{2t}}(I)\neq\emptyset.
\]
\end{lemma}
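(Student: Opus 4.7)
The plan is to reduce all three containment statements to one calculation: a careful bound on how the pseudo-hyperbolic disk $D(z,\tilde{t})$ distorts the quantities $1-|w|$ and $|w-\xi|$ that define the Stolz region $\Gamma_\alpha(\xi)$. The engine is the standard identity $1-|\varphi_z(w)|^2 = (1-|z|^2)(1-|w|^2)/|1-\bar z w|^2$, which for $|\varphi_z(w)|\le \tilde{t}=\tanh t$ yields
\[
e^{-2t}(1-|z|)\;\le\;1-|w|\;\le\; e^{2t}(1-|z|),
\]
since $(1\pm\tanh t)/(1\mp\tanh t)=e^{\pm 2t}$. A companion estimate gives $|w-z|\lesssim \tilde{t}(1-|z|)$ with an explicit constant comparable to those above; this is what I will use in the triangle inequality.

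For the first containment, I take $z\in\Gamma_\alpha(\xi)$ and $w\in D(z,\tilde{t})$. Using $|w-\xi|\le|w-z|+|z-\xi|$, I bound $|z-\xi|<(\alpha+1)(1-|z|)$ by hypothesis and control $|w-z|$ by the distortion estimate above. Dividing by $1-|w|$ and using $1-|z|\le e^{2t}(1-|w|)$ produces a bound of the form $|w-\xi|<(\alpha_t+1)(1-|w|)$ with $\alpha_t=(\alpha+2)e^{2t}-2$, after tuning the auxiliary constant in $|w-z|\lesssim(1-|z|)$ so that the $\alpha+1$ inflates only to $\alpha+2$ before being multiplied by $e^{2t}$.

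The second containment follows from the first by symmetry: the pseudo-hyperbolic disk is symmetric, i.e.\ $w\in D(z,\tilde{t})\iff z\in D(w,\tilde{t})$. So given $z\in\wedge_{\alpha_t}(I)$ and $w\in D(z,\tilde{t})$, I must show $I_\alpha(w)\subseteq I$. If $\eta\in I_\alpha(w)$ then $w\in \Gamma_\alpha(\eta)$, and applying the first containment with the roles of $z$ and $w$ exchanged gives $z\in\Gamma_{\alpha_t}(\eta)$, i.e.\ $\eta\in I_{\alpha_t}(z)\subseteq I$, as required.

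The third statement is a double application of the second. If $w_0\in D(z,\tilde{t})\cap\wedge_{\alpha_{2t}}(I)$, then by the triangle inequality for the hyperbolic distance every $u\in D(z,\tilde{t})$ lies in $D(w_0,\widetilde{2t})$, because hyperbolic distance $\le t$ plus $\le t$ gives $\le 2t$ and $\tanh(2t)=\widetilde{2t}$. Now applying the second containment with $t$ replaced by $2t$ and with $w_0$ in place of $z$ shows $D(w_0,\widetilde{2t})\subseteq \wedge_\alpha(I)$, whence $D(z,\tilde{t})\subseteq \wedge_\alpha(I)$. The only nontrivial obstacle is fixing the constants in the distortion bound for $|w-z|$ sharply enough that the inflation factor from $\alpha+1$ is exactly $(\alpha+2)e^{2t}-1$ and not something larger; once that book-keeping is done the two subsequent parts are essentially formal consequences of the symmetry and the triangle inequality for hyperbolic distance.
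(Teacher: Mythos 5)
The paper does not prove this lemma at all --- it is quoted verbatim from Wu \cite[Lemma 2.2 and Lemma 3.1]{Wu-Scichina} --- so there is no internal proof to compare against; I can only judge your argument on its own terms. Your architecture is sound: deducing the second containment from the first via the symmetry $w\in D(z,\tilde t)\iff z\in D(w,\tilde t)$ and the definition of $\wedge_\alpha(I)$ through $I_\alpha(\cdot)$ is exactly right, and the third containment does follow from the second applied with parameter $2t$, using $\rho(u,w_0)\le\frac{\rho(u,z)+\rho(z,w_0)}{1+\rho(u,z)\rho(z,w_0)}<\tanh(2t)$. Those two reductions are complete as written.

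The gap is in the first containment, precisely at the step you yourself flag as the ``only nontrivial obstacle,'' and as described it does not close. If you bound $|w-z|\le c\,(1-|z|)$ and only at the end convert via $1-|z|\le e^{2t}(1-|w|)$, you cannot reach the aperture $(\alpha+2)e^{2t}-2$: the sharp admissible constant is $c=e^{2t}-1$ (take $w=\varphi_z(\tilde t\,z/|z|)$ with $|z|\to1^-$ to see $|w-z|$ really attains $\approx(e^{2t}-1)(1-|z|)$), and then the computation yields
$|w-\xi|<\big[(e^{2t}-1)+(\alpha+1)\big]e^{2t}(1-|w|)=(\alpha+e^{2t})e^{2t}(1-|w|)$,
and $(\alpha+e^{2t})e^{2t}\le(\alpha+2)e^{2t}-1$ is equivalent to $(e^{2t}-1)^2\le0$, which fails for every $t>0$. (Demanding instead that $\alpha+1$ ``inflate only to $\alpha+2$'' before multiplying by $e^{2t}$ would force $|w-z|\le 1-|z|$, which is false for large $t$, and would still give $(\alpha+2)e^{2t}$ rather than $(\alpha+2)e^{2t}-1$.) The fix is to estimate $|w-z|$ against $1-|w|$ rather than $1-|z|$: from $|w-z|=\rho(z,w)\,|1-\bar zw|\le\tilde t\big[(1-|w|^2)+|w-z|\big]$ one gets $|w-z|\le\frac{\tilde t}{1-\tilde t}(1-|w|^2)\le(e^{2t}-1)(1-|w|)$, and by symmetry also $1-|z|\le e^{2t}(1-|w|)$; then
$|w-\xi|\le|w-z|+|z-\xi|<(e^{2t}-1)(1-|w|)+(\alpha+1)e^{2t}(1-|w|)=\big[(\alpha+2)e^{2t}-1\big](1-|w|)$,
which is exactly $(\alpha_t+1)(1-|w|)$. (Note also that your claimed two-sided bound $e^{-2t}(1-|z|)\le 1-|w|\le e^{2t}(1-|z|)$ does not follow from the identity for $1-|\varphi_z(w)|^2$ alone without losing factors of $\tfrac{1+|z|}{1+|w|}$; it is the triangle-inequality argument above that gives it cleanly.) With this one substitution your proof of all three statements is correct with the stated constants.
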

Let $\{z_{j}\}_{j=1}^{\infty}$ be a sequence in $\mathbb{D}$, $\delta>0$ and $r\in(0,1)$. If $\big|\frac{z_{j}-z_{k}}{1-\bar{z_{j}}z_{k}}\big|\geq\delta$ for any $j\neq k$, then we say the sequence $\{z_{j}\}_{j=1}^{\infty}$ is \textit{$\delta$-separated.} And if $\{z_{j}\}_{j=1}^{\infty}$ satisfies $\mathbb{D}=\bigcup\limits_{j=1}^{\infty}D(z_{j},r)$ and $\frac{r}{5}$-separated, we say $\{z_{j}\}_{j=1}^{\infty}$ is an \textit{$r$-lattice}. Notice that every $z\in\mathbb{D}$ belongs to at most $N=N(r)$ pseudohyperbolic disks $D(z_{j},r)$, where $\{z_{j}\}_{j=1}^{\infty}$ is a separated sequence (see Lemma 12 of Chapter 2 in \cite{Dur} or Lemma 3 in \cite{Luecking-1993}).
\vskip 0.1in
Let $\{z_{k}\}$ be a separated sequence. For any $K\in(1,+\infty)$, we re-index the sequence $\{z_{k}\}$ as follows: For each $j\in\mathbb{N}\bigcup\{0\}$, we denote the points of the sequence $\{z_{k}\}$ in the annulus $A_{j}=\{z\in\mathbb{D}:r_{j}\leq|z|<r_{j+1}\}$ by $\{z_{j,l}\}_{l}$, where $r_{j}=1-K^{-j}$. For $p\in(0,\infty)$, note that a discrete sequence $\{\lambda_{j,l}\}_{j,l=1}^{\infty}\in l^{p}$ if
\[
\bigg\|\{\lambda_{j,l}\}_{j,l=1}^{\infty}\bigg\|_{l^{p}}:=
\bigg(\sum_{j=1}^{\infty}\sum_{l=1}^{\infty}|\lambda_{j,l}|^{p}\bigg)^{\frac{1}{p}}<\infty.
\]
The following (see \cite[Theorem 1]{PRS-2021}) is the atomic decomposition for Bergman spaces induced by $\mathcal{D}$ weights. One can consult \cite{PRS-2021} for more information.
\begin{lemma}\label{atomic decomposition}  Let $0<p<\infty$, $1<K<\infty$, $\omega\in\mathcal{D}$ and $\{z_{k}\}$ be a separated sequence in $\mathbb{D}$. Denote by $\{z_{j,l}\}_{j,l}$ the sequence re-indexed from $\{z_{k}\}$ using the method above. Then there exists a positive constant $M$ ($M$ only depends on the choice of $\omega$ and $p$) such that for any sequence
$\{\lambda_{j,l}\}_{j,l=1}^{\infty}\in l^{p}$, the function
\[
F(z):=\sum_{j,l=1}^{\infty}\lambda_{j,l}\frac{(1-|z_{j,l}|)^{M-\frac{1}{p}}
\widehat{\omega}(z_{j,l})^{-\frac{1}{p}}}{(1-\overline{z_{j,l}}z)^{M}}
\]
is analytic in $\mathbb{D}$ and
\[
\|F\|_{L_{a}^{p}(\omega)}\lesssim \|\{\lambda_{j,l}\}_{j,l=1}^{\infty}\|_{l^{p}}.
\]
\end{lemma}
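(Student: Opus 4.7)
The plan is to split the argument into the easy verification of analyticity and the main $L^p(\omega)$ norm estimate, the latter handled by different devices depending on whether $p\le 1$ or $p>1$. Throughout, the crucial analytic input is the weighted integral estimate
\[
\int_{\mathbb{D}} \frac{\omega(z)\,dA(z)}{|1-\bar w z|^{c}} \asymp \frac{\widehat\omega(w)}{(1-|w|)^{c-1}},\qquad w\in\mathbb{D},
\]
valid for all $c$ sufficiently large; this is a standard consequence of the forward and reverse doubling of $\omega\in\mathcal{D}$ (it is implicit in the kernel norm estimates used in Lemma~\ref{reproducing}). I would fix $M$ at the end of the proof to be large enough that every invocation of this estimate is legal.

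For analyticity, I would note that for $z$ in any compact $K\subset\mathbb{D}$ one has $|1-\overline{z_{j,l}}z|\ge c_K>0$, while $(1-|z_{j,l}|)^{M-1/p}\widehat\omega(z_{j,l})^{-1/p}$ is bounded uniformly in $(j,l)$; together with $\{\lambda_{j,l}\}\in\ell^p$ this gives normal convergence of the series on $K$ (via Hölder if $p>1$), so $F\in H(\mathbb{D})$. For the norm bound when $0<p\le 1$, subadditivity of $\|\cdot\|_{L^p(\omega)}^p$ reduces the task to
\[
\|F\|_{L^p(\omega)}^p \le \sum_{j,l} |\lambda_{j,l}|^p\,(1-|z_{j,l}|)^{pM-1}\widehat\omega(z_{j,l})^{-1}\!\int_{\mathbb{D}} \frac{\omega(z)\,dA(z)}{|1-\overline{z_{j,l}}z|^{pM}},
\]
and the boxed estimate applied with $c=pM$ collapses each inner factor to a constant, giving exactly $\sum|\lambda_{j,l}|^p=\|\{\lambda_{j,l}\}\|_{\ell^p}^p$.

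For $p>1$, I would view $F=\sum T_{j,l}\lambda_{j,l}$ as an integral-type operator acting on $\ell^p$ and apply Schur's test with a test function of the form $h(w)=(1-|w|)^{-s}\widehat\omega(w)^{-t}$ for appropriately tuned $s,t$; alternatively a direct Hölder split separating $|\lambda_{j,l}|^p$ from a weight $(1-|z_{j,l}|)^{a}\widehat\omega(z_{j,l})^{b}$, both with exponents matched so that the remaining integral is again controlled by the boxed estimate. In this case the $\ell$-summation inside each annulus $A_j$ is controlled by the separation hypothesis on $\{z_{j,l}\}$: the pseudo\-hyperbolic disks $D(z_{j,l},r/5)$ are disjoint and lie in a fixed thickening of $A_j$, which limits their overlap with any Carleson box, and Lemma~\ref{D-regular} lets me replace $\widehat\omega(z_{j,l})$ by $\widehat\omega$ of any nearby point, converting discrete sums into continuous $\omega$-integrals.

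The main obstacle I expect is the verification of the weighted integral estimate itself with the correct dependence on $\widehat\omega$: for $\mathcal{D}$ weights there is no closed form for moments of $\omega$, so one must split $\mathbb{D}$ into $\{|z|\le|w|\}$ and $\{|z|>|w|\}$, use the reverse doubling to obtain $\widehat\omega(w)$ as a lower bound, and the $\widehat{\mathcal{D}}$ (forward doubling) property for the complementary upper bound. A secondary nuisance is ensuring that one choice of $M$ works uniformly across the $p\le 1$ and $p>1$ regimes and accommodates the summation over $j$ through the geometric decay $1-r_j=K^{-j}$; I would take $M$ strictly larger than $\max(1,1/p)$ times the threshold demanded by the integral estimate, which suffices since $\omega\in\mathcal{D}$ yields polynomial-type control $\widehat\omega(r_j)\lesssim K^{j\eta}$ for some $\eta>0$.
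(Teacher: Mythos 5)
The paper does not actually prove this lemma: it is quoted verbatim as \cite[Theorem 1]{PRS-2021} (Pel\'aez--R\"atty\"a--Sierra), so there is no internal proof to compare against. Your outline is the standard route for this kind of atomic-decomposition upper bound, and it is the right one. The boxed weighted Forelli--Rudin estimate is exactly Lemma A(vi) of \cite{PRS}, which the paper itself invokes later (in the proof of Proposition \ref{t-c-2}), so you do not need to re-derive it. Your treatment of $0<p\le 1$ is complete and correct: $p$-subadditivity plus the estimate with $\gamma+1=pM$ collapses each term to $|\lambda_{j,l}|^{p}$, provided $M\ge(\gamma_{0}(\omega)+1)/p$. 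The re-indexing by annuli $A_{j}$ plays no essential role for $L^{p}_{a}(\omega)=A^{p,p}_{\omega}$; it matters in \cite{PRS-2021} only because the theorem there is stated for mixed-norm spaces.

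Two places in the $p>1$ regime are written more optimistically than the argument warrants, though neither is fatal. First, for normal convergence (and for the H\"older split) uniform boundedness of the coefficients $(1-|z_{j,l}|)^{M-1/p}\widehat{\omega}(z_{j,l})^{-1/p}$ is not enough; you need $\sum_{j,l}c_{j,l}^{p'}<\infty$, which requires the per-annulus count $\#\{l: z_{j,l}\in A_{j}\}\lesssim K^{j}$ (from separation and disjointness of the disks $D(z_{j,l},r/5)$) together with the two-sided power bounds $ (1-t)^{\varrho_2}\lesssim\widehat{\omega}(t)\lesssim(1-t)^{\varrho_1}$ coming from $\omega\in\mathcal{D}$; you gesture at this only in the final sentence. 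Second, after the H\"older split the ``remaining integral'' is not controlled by the boxed estimate for $\omega$ itself: one needs the discrete Forelli--Rudin bound
$\sum_{j,l}(1-|z_{j,l}|)^{c}\widehat{\omega}(z_{j,l})^{-p'/p}|1-\overline{z_{j,l}}z|^{-d}\lesssim \widehat{\omega}(z)^{-p'/p}(1-|z|)^{c+2-d}$,
i.e.\ the analogous estimate for the auxiliary radial weight $(1-|w|)^{c-2}\widehat{\omega}(w)^{-p'/p}$. This does hold for $c$ large, because that auxiliary weight is again regular (its tail integral is comparable to $(1-|w|)^{c-1}\widehat{\omega}(w)^{-p'/p}$ by the doubling of $\widehat{\omega}$), and the passage from the discrete sum to the integral uses exactly the separation and Lemma \ref{D-regular} as you say --- but the claim that the same boxed estimate applies should be replaced by this verification. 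With those two points filled in, the proof is sound.
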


Now we are in a position to prove Proposition \ref{vanishing Carleson}.
\begin{proof}[Proof of Proposition \ref{vanishing Carleson}] First we prove that $\textmd{(i)}\Rightarrow\textmd{(ii)}$.
Choose $\delta\in(0,\infty)$ such that $\tilde{\delta}:=\tanh\delta$ satisfies the condition in \textmd{(i)}, it follows from Lemma \ref{prop-reverse} that $\kappa(z)=\frac{\widehat{\omega}(z)}{1-|z|}$ is a regular weight. By Lemma \ref{regular-lemma}, we know that
\begin{equation}\label{r-1}
\kappa(z)\asymp\kappa(\zeta),\quad \zeta\in D(z,\tilde{\delta}).
\end{equation}
Let $0<p<\infty$, $1\leq s<\infty$, $0<\iota<1$ and $\alpha>0$. Let $\alpha_{\delta}=(\alpha+2)e^{2\delta}-2$. For any $f\in L_{a}^{p}(\omega)$
and $I\subseteq\mathbb{T}$, using the subharmonicity of $|f|^{p}$, Lemma \ref{o-g} and \eqref{r-1}, we obtain
\begin{align}\label{aeq2}
|f(z)|^{p\iota}
&\lesssim\frac{1}{(1-|z|)^{2}}\int_{B(z,\tilde{\delta}(1-|z|))}|f(\zeta)|^{p\iota}dA(\zeta)\nonumber\\
&\lesssim\frac{1}{\kappa(z)(1-|z|)^{2}}\int_{D(z,\tilde{\delta})}|f(\zeta)|^{p\iota}\kappa(\zeta)dA(\zeta)\nonumber\\
&=\frac{1}{\kappa(z)(1-|z|)^{2}}\int_{\wedge_{\alpha}(I)\cap D(z,\tilde{\delta})}|f(\zeta)|^{p\iota}\kappa(\zeta)dA(\zeta),\quad\textmd{for~any}~ z\in\wedge_{\alpha_{\delta}}(I).
\end{align}
Combining Lemma \ref{prop-reverse}, Fubini's theorem,
$\textmd{H}\ddot{\textmd{o}}\textmd{lder's}$ inequality, \eqref{r-1} with \eqref{aeq2}, for any $I\subseteq\mathbb{T}$, we get
\begin{align}\label{aeq3}
\int_{\wedge_{\alpha_{\delta}}(I)}|f(z)|^{p\iota}d\mu(z)
&\lesssim\int_{\wedge_{\alpha_{\delta}}(I)}\frac{1}{\kappa(z)(1-|z|)^{2}}
\int_{\wedge_{\alpha}(I)\cap D(z,\tilde{\delta})}|f(\zeta)|^{p\iota}\kappa(\zeta)dA(\zeta)d\mu(z)\nonumber\\
&\asymp\int_{\wedge_{\alpha}(I)}\frac{1}{\kappa(\zeta)(1-|\zeta|)^{2}}
\int_{\wedge_{\alpha_{\delta}}(I)\cap D(\zeta,\tilde{\delta})}d\mu(z)|f(\zeta)|^{p\iota}\kappa(\zeta)dA(\zeta)\nonumber\\
&\lesssim\int_{\wedge_{\alpha}(I)}\frac{\mu(D(\zeta,\tilde{\delta}))}{\kappa(D(\zeta,\tilde{\delta}))}|f(\zeta)|^{p\iota}\kappa(\zeta)dA(\zeta)\nonumber\\
&\lesssim\|f\|_{L_{a}^{p}(\omega)}^{p\iota}
\bigg(\int_{\wedge_{\alpha}(I)}\bigg(\frac{\mu(D(\zeta,\tilde{\delta}))}{\kappa(D(\zeta,\tilde{\delta}))}\bigg)^{\frac{1}{1-\iota}}
\kappa(\zeta)dA(\zeta)\bigg)^{1-\iota}.
\end{align}
Since $\big(\frac{\mu(D(\zeta,\tilde{\delta}))}{\kappa(D(\zeta,\tilde{\delta}))}\big)^{\frac{1}{1-\iota}}\kappa(\zeta)dA(\zeta)$ is a
vanishing $s$-Carleson measure, then for any $\varepsilon>0$, there exists $r_{1}\in(0,1)$ such that for any $I\subseteq \mathbb{T}$ with $0<|I|\leq r_{1}$, it holds that
\[
\frac{\int_{\wedge_{\alpha}(I)}\big(\frac{\mu(D(\zeta,\tilde{\delta}))}{\kappa(D(\zeta,\tilde{\delta}))}\big)^{\frac{1}{1-\iota}}
\kappa(\zeta)dA(\zeta)}{|I|^{s}}<\varepsilon^{\frac{1}{1-\iota}}.
\]
Combining with \eqref{aeq3}, there exists a constant $C$ ($C$ does not depend on the choice of $f$) such that for any $I\subseteq \mathbb{T}$ with $0<|I|\leq r_{1}$, for any $f\in L_{a}^{p}(\omega)$, we have
\[
\frac{\int_{\wedge_{\alpha_{\delta}}(I)}|f(z)|^{p\iota}d\mu(z)}{|I|^{s(1-\iota)}}
\leq\varepsilon C \|f\|_{L_{a}^{p}(\omega)}^{p\iota},
\]
as desired.
\vskip 0.1in
Next we show $\textmd{(ii)}\Rightarrow\textmd{(i)}$. By the hypothesis, for any $f\in L_{a}^{p}(\omega)$ and $\varepsilon>0$, there exists $r_{2}\in(0,1)$ such that for any $I\subseteq \mathbb{T}$ with $0<|I|\leq r_{2}$, it holds that
\[
\frac{\int_{\wedge_{\alpha}(I)}|f(z)|^{p\iota}d\mu(z)}{|I|^{s(1-\iota)}}\lesssim\varepsilon\|f\|_{L_{a}^{p}(\omega)}^{p\iota}.
\]
Take $\delta>0$ such that $\tilde{\delta}=\tanh \delta\in (0,\frac{1}{2})$. Let $\{z_{k}\}_{k=1}^{\infty}$ be a $\tilde{\delta}$-lattice. Take the constant $M$ and denote by $\{z_{j,l}\}_{j,l=1}^{\infty}$ the sequence re-indexed from $\{z_{k}\}_{k=1}^{\infty}$ as in Lemma \ref{atomic decomposition}.
Hence, by Lemma \ref{atomic decomposition}, for any $\{\lambda_{j,l}\}_{j,l=1}^{\infty}\in l^{p}$, for any $I\subseteq \mathbb{T}$ with $0<|I|\leq r_{2}$, it holds that
\[
\frac{\int_{\wedge_{\alpha}(I)}\big|\sum_{j,l=1}^{\infty}\lambda_{j,l}f_{j,l}(z)\big|^{p\iota}d\mu(z)}{|I|^{s(1-\iota)}}
\lesssim\varepsilon\bigg\|\sum_{j,l=1}^{\infty}\lambda_{j,l}f_{j,l}(z)\bigg\|_{L_{a}^{p}(\omega)}^{p\iota}
\lesssim\varepsilon\|\{\lambda_{j,l}\}_{j,l=1}^{\infty}\|_{l^{p}}^{p\iota},
\]
where
\[
f_{j,l}(z)=\frac{(1-|z_{j,l}|)^{M-\frac{1}{p}}\widehat{\omega}(z_{j,l})^{-\frac{1}{p}}}{(1-\overline{z_{j,l}}z)^{M}}, \quad z\in\mathbb{D}.
\]
For $t\in(0,1)$, replace $\lambda_{j,l}$ by $\lambda_{j,l}r_{j,l}(t)$, where $r_{j,l}(t)$ is the Rademacher function. Then
for any $I\subseteq \mathbb{T}$ with $0<|I|\leq r_{2}$, we have
\[
\frac{\int_{\wedge_{\alpha}(I)}\big|\sum_{j,l=1}^{\infty}\lambda_{j,l}r_{j,l}(t)f_{j,l}(z)\big|^{p\iota}d\mu(z)}{|I|^{s(1-\iota)}}
\lesssim\varepsilon\|\{\lambda_{j,l}\}_{j,l=1}^{\infty}\|_{l^{p}}^{p\iota}.
\]
Integrate both sides of the above inequality over $(0,1)$ with respect to $t$, we obtain
\[
\frac{\int_{\wedge_{\alpha}(I)}\int_{0}^{1}\big|\sum_{j,l=1}^{\infty}\lambda_{j,l}r_{j,l}(t)f_{j,l}(z)\big|^{p\iota}dtd\mu(z)}{|I|^{s(1-\iota)}}
\lesssim\varepsilon\|\{\lambda_{j,l}\}_{j,l=1}^{\infty}\|_{l^{p}}^{p\iota}.
\]
Combining with Lemma \ref{le-Khinchine}, we get
\begin{equation}\label{a-t-eq1}
\int_{\wedge_{\alpha}(I)}\bigg(\sum_{j,l=1}^{\infty}|\lambda_{j,l}|^{2}|f_{j,l}(z)|^{2}\bigg)^{\frac{p\iota}{2}}d\mu(z)
\lesssim\varepsilon|I|^{s(1-\iota)}\|\{\lambda_{j,l}\}_{j,l=1}^{\infty}\|_{l^{p}}^{p\iota},
\end{equation}
for $I\subseteq\mathbb{T}~\textmd{with}~0<|I|\leq r_{2}$.
It follows from \eqref{a-t-eq1} that there exists a constant $N=N(\delta)$ such that
\begin{align}\label{a-t-eq2}
\sum_{\{j,l:D(z_{j,l},2\tilde{\delta})\subseteq\wedge_{\alpha}(I)\}}|\lambda_{j,l}|^{p\iota}
\frac{\mu(D(z_{j,l},2\tilde{\delta}))}{\widehat{\omega}(z_{j,l})^{\iota}(1-|z_{j,l}|)^{\iota}}
&\asymp\sum_{\{j,l:D(z_{j,l},2\tilde{\delta})\subseteq\wedge_{\alpha}(I)\}}
\int_{D(z_{j,l},2\tilde{\delta})}\big(|\lambda_{j,l}|\cdot|f_{j,l}(z)|\big)^{p\iota}d\mu(z)\nonumber\\
&\leq\sum_{\{j,l:D(z_{j,l},2\tilde{\delta})\subseteq\wedge_{\alpha}(I)\}}
\int_{D(z_{j,l},2\tilde{\delta})}\big(\sum_{j,l=1}^{\infty}|\lambda_{j,l}|^{2}\cdot|f_{j,l}(z)|^{2}\big)^{\frac{p\iota}{2}}d\mu(z)\nonumber\\
&\leq N \int_{\wedge_{\alpha}(I)}\big(\sum_{j,l=1}^{\infty}|\lambda_{j,l}|^{2}\cdot|f_{j,l}(z)|^{2}\big)^{\frac{p\iota}{2}}d\mu(z)\nonumber\\
&\lesssim\varepsilon N|I|^{s(1-\iota)}\|\{\lambda_{j,l}\}_{j,l=1}^{\infty}\|_{l^{p}}^{p\iota},
\quad \textmd{for}~I\subseteq\mathbb{T}~\textmd{with}~0<|I|\leq r_{2}.
\end{align}
Since the dual space of $l^{\frac{1}{\iota}}$ can be identified with $l^{\frac{1}{1-\iota}}$, together with \eqref{a-t-eq2}, we deduce
\begin{equation}\label{a-t-eq3}
\bigg[\sum_{\{j,l:D(z_{j,l},2\tilde{\delta})\subseteq\wedge_{\alpha}(I)\}}
\bigg(\frac{\mu(D(z_{j,l},2\tilde{\delta}))}{\widehat{\omega}(z_{j,l})^{\iota}(1-|z_{j,l}|)^{\iota}}\bigg)^{\frac{1}{1-\iota}}\bigg]^{1-\iota}
\lesssim\varepsilon |I|^{s(1-\iota)},
\quad \textmd{for}~I\subseteq\mathbb{T}~\textmd{with}~0<|I|\leq r_{2}.
\end{equation}
Take $t=2\tanh^{-1}(2\tilde{\delta})$. Let $\alpha_{t}=(\alpha+2)e^{2t}-2$. By Lemma \ref{D-regular}, Lemma \ref{o-g} and \eqref{a-t-eq3}, for any $I\subseteq \mathbb{T}$ with
$0<|I|\leq r_{2}$, we get
\begin{align*}\label{a-t-eq4}
\int_{\wedge_{\alpha_{t}}(I)}\bigg(\frac{\mu(D(z,\tilde{\delta}))}{\kappa(D(z,\tilde{\delta}))}\bigg)^{\frac{1}{1-\iota}}\kappa(z)dA(z)
&\leq\sum_{\{j,l:D(z_{j,l},\tilde{\delta})\bigcap\wedge_{\alpha_{t}(I)\neq\varnothing}\}}\int_{D(z_{j,l},\tilde{\delta})}
\bigg(\frac{\mu(D(z,\tilde{\delta}))}{\kappa(D(z,\tilde{\delta}))}\bigg)^{\frac{1}{1-\iota}}\kappa(z)dA(z)\nonumber\\
&\leq\sum_{\{j,l:D(z_{j,l},2\tilde{\delta})\subseteq\wedge_{\alpha}(I)\}}\int_{D(z_{j,l},\tilde{\delta})}
\bigg(\frac{\mu(D(z,\tilde{\delta}))}{\kappa(D(z,\tilde{\delta}))}\bigg)^{\frac{1}{1-\iota}}\kappa(z)dA(z)\nonumber\\
&\lesssim\varepsilon^{\frac{1}{1-\iota}}|I|^{s},
\end{align*}
which yields that $\big(\frac{\mu(D(z,\tilde{\delta}))}{\kappa(D(z,\tilde{\delta}))}\big)^{\frac{1}{1-\iota}}\kappa(z)dA(z)$ is a
vanishing $s$-Carleson measure, as desired.
\end{proof}

For a positive measure $\mu$ on $\mathbb{D}$ and $\alpha>0$, define the area integral operator $A_{\mu,\alpha}^{2}$ acting on $f\in H(\mathbb{D})$ by
\[
A_{\mu,\alpha}^{2}(f)(\xi)=\bigg(\int_{\Gamma_{\alpha}(\xi)}|f(z)|^{2}\frac{d\mu(z)}{1-|z|}\bigg)^{\frac{1}{2}},
\quad\xi\in\mathbb{T}.
\]
For a $\mathcal{D}$ weight $\omega$, $0<q,\alpha<\infty$ and $g\in H(\mathbb{D})$, it follows from Lemma \ref{le1} that the Volterra type integration operator is intimately related to area integral operator as follows,
\begin{equation}\label{area integral}
\|J_{g}(f)\|_{H^{q}}\asymp\|A_{\mu_{g},\alpha}^{2}(f)\|_{L^{q}(\mathbb{T})},\quad \textmd{for~any}~f\in L_{a}^{p}(\omega),
\end{equation}
where $d\mu_{g}(z)=|g'(z)|^{2}(1-|z|)dA(z)$.
\vskip 0.1in
\begin{proposition} \label{t-c-2} Let $2<p<\infty$, $\omega\in\mathcal{D}$, $g\in H(\mathbb{D})$ and
\[
d\mu_{g}(z)=|g'(z)|^{\frac{2p}{p-2}}\frac{(1-|z|^{2})^{\frac{p+2}{p-2}}}{\widehat{\omega}(z)^{\frac{2}{p-2}}}dA(z).
\]
Then the Volterra type
integration operator $J_{g}:L_{a}^{p}(\omega)\to H^{p}$ is compact if and only if $\mu_{g}$ is a vanishing Carleson measure.
\end{proposition}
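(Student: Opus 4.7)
Begin by reducing the problem to an area-integral form via \eqref{area integral}: setting $G_f(\xi)=\int_{\Gamma(\xi)}|f(z)g'(z)|^2 dA(z)$, we have $\|J_g f\|_{H^p}^p\asymp \|G_f\|_{L^{p/2}(\mathbb{T})}^{p/2}$. Since $p>2$, the key idea is to exploit the $L^{p/2}\leftrightarrow L^{p/(p-2)}$ duality on $\mathbb{T}$ to linearize the outer integral, then apply H\"older's inequality in the disk against the regular weight $\kappa(z)=\widehat{\omega}(z)/(1-|z|)$, which is equivalent to $\omega$ in Bergman norm by Lemma \ref{prop-reverse}; the weighted measure produced by the Hölder step will match exactly the $\mu_g$ from the statement.

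For sufficiency, assume $\mu_g$ is vanishing Carleson, hence a Carleson measure. For nonnegative $h\in L^{p/(p-2)}(\mathbb{T})$ of unit norm, Fubini gives $\int_\mathbb{T} G_f h\,d\xi=\int_\mathbb{D}|f|^2|g'|^2 \tilde h(z)\,dA(z)$, with $\tilde h(z)\lesssim(1-|z|)Mh(\xi_z)$ for $\xi_z=z/|z|$ and $M$ the Hardy--Littlewood maximal operator. Factoring the integrand as $[|f|^2\kappa^{2/p}]\cdot[|g'|^2(1-|z|)Mh(\xi_z)\kappa^{-2/p}]$ and applying Hölder with exponents $p/2$ and $p/(p-2)$, and using $\kappa^{-2/(p-2)}=(1-|z|)^{2/(p-2)}\widehat{\omega}^{-2/(p-2)}$, the second factor collapses into $(\int Mh(\xi_z)^{p/(p-2)}d\mu_g(z))^{(p-2)/p}$. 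Carleson's classical inequality for the 1-Carleson measure $\mu_g$, together with the $L^{p/(p-2)}$-boundedness of $M$ (valid since $p/(p-2)>1$), then yields the bound $\lesssim \|f\|_{L^p(\omega)}^2\|h\|_{L^{p/(p-2)}}$, whence $J_g$ is bounded. To upgrade to compactness for $\{f_n\}$ bounded in $L^p_a(\omega)$ with $f_n\to 0$ uniformly on compacts, split $\Gamma(\xi)=(\Gamma(\xi)\cap r\mathbb{D})\cup(\Gamma(\xi)\setminus r\mathbb{D})$ and use $(a+b)^{p/2}\lesssim a^{p/2}+b^{p/2}$: the inner piece vanishes as $n\to\infty$ by uniform convergence, while the outer piece is controlled by rerunning the argument with $\mu_g|_{\mathbb{D}\setminus r\mathbb{D}}$, whose Carleson norm tends to zero as $r\to 1^-$ by a standard covering argument based on the vanishing Carleson property at small scales.

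For necessity, assume $J_g$ is compact. The test functions $f_\lambda(z)=(K_\lambda^\omega(z))^\alpha/\|K_\lambda^\omega\|^{\alpha+1-1/p}_{L_a^{\alpha+1-1/p}(\omega)}$ from the proof of Proposition \ref{t-c-1} (for $\alpha>1/p$) are uniformly bounded in $L^p_a(\omega)$ and converge to zero on compact subsets of $\mathbb{D}$, so $\|J_g f_\lambda\|_{H^p}\to 0$ as $|\lambda|\to 1^-$. To convert this into the vanishing Carleson property of $\mu_g$, I would follow the $(\mathrm{ii})\Rightarrow(\mathrm{i})$ scheme in the proof of Proposition \ref{vanishing Carleson}: randomize the atomic decomposition of Lemma \ref{atomic decomposition} with Rademacher functions, integrate in the randomization parameter, apply Khinchine's inequality (Lemma \ref{le-Khinchine}) to collapse the average into a square function, and extract information on individual atomic indices via $l^{p/(p-2)}$-duality; this produces $\mu_g(S(I))/|I|\to 0$ as $|I|\to 0$. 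The principal obstacle is this necessity step: without an explicit formula for $K_\lambda^\omega$ (noted in Remark \ref{r1}), the single-parameter test functions alone do not yield the full Carleson measure information, and one must couple the atomic/Khinchine randomization carefully with the same $L^{p/2}\leftrightarrow L^{p/(p-2)}$ duality that drives the sufficiency estimate. A secondary subtlety on the sufficiency side is that Lemma \ref{prop-reverse} supplies only norm equivalence $\|f\|_{L^p(\omega)}\asymp\|f\|_{L^p(\kappa)}$, so the Hölder pairing must be performed in the $\kappa$-weighted $L^p$-norm to keep all subsequent $\widehat{\omega}$-factors properly aligned with the definition of $\mu_g$.
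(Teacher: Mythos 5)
Your sufficiency argument is essentially sound and takes a genuinely different, more direct route than the paper: you dualize $\|G_f\|_{L^{p/2}(\mathbb{T})}$ against $L^{p/(p-2)}(\mathbb{T})$ and run H\"older in the disk against $\kappa$, whereas the paper tests $d\mu_{f_j,g}$ against $h\in H^{p/(p-2)}$, invokes the Carleson embedding for Hardy spaces \cite[Theorem 9.4]{D}, and then identifies the resulting operator norm with $\|\widetilde{\mu_{f_j,g}}\|_{L^{p/2}(\mathbb{T})}$ via \cite[Theorem E]{Pau-2016}. Your H\"older factorization does reproduce exactly $d\mu_g$, and your truncation-plus-covering step for compactness matches \eqref{eq97}. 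One technical slip: the bound $\tilde h(z)\lesssim(1-|z|)Mh(z/|z|)$ is true but cannot be fed into a Carleson estimate, because the superlevel sets of $z\mapsto Mh(z/|z|)$ are full sectors, which a Carleson measure does not control (take $\mu=\delta_{z_0}$ with $|z_0|=1/2$ and $h$ an $L^{p/(p-2)}$-normalized bump at $z_0/|z_0|$). You should instead use $\tilde h(z)\le|I(z)|\,\inf_{\xi\in I(z)}Mh(\xi)$, whose superlevel sets sit inside tents over the components of $\{Mh>\lambda\}$, or replace $Mh$ by the Poisson extension of $h$; either repair is routine.

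The necessity direction has a genuine gap, which you partly acknowledge but do not close. Proposition \ref{vanishing Carleson}(ii)$\Rightarrow$(i) --- the Rademacher/Khinchine/atomic-decomposition machinery you invoke --- requires as \emph{input} that $|f|^{2}\,d\nu_g$ (with $d\nu_g=|g'(z)|^2(1-|z|)\,dA(z)$) is a vanishing $\frac{p-2}{p}$-Carleson measure uniformly over the unit ball of $L^p_a(\omega)$; nothing in your sketch derives this from compactness of $J_g$. The kernel-power test functions $f_\lambda$ of Proposition \ref{t-c-1} only yield the pointwise decay $|g'(\lambda)|(1-|\lambda|^2)\widehat{\omega}(\lambda)^{-1/p}\to0$, which for $p>2$ is strictly weaker than the Carleson condition on $\mu_g$. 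The paper's bridge is concrete: pair $(A^2_{\nu_g,\alpha}f)^2$ against the indicators $h_a=\chi_{I_\alpha(a)}\in L^{p/(p-2)}(\mathbb{T})$ to localize the area integral to tents, obtaining \eqref{aeq7}; then insert the normalized test functions $f_{\zeta,\gamma}(z)=(1-|\zeta|)^{\frac{\gamma+1}{p}}(1-\bar{\zeta}z)^{-\frac{\gamma+1}{p}}\big(\widehat{\omega}(\zeta)(1-|\zeta|)\big)^{-\frac{1}{p}}$, whose $L^p_a(\omega)$-normalization rests on Lemma A(vi) of \cite{PRS} rather than on the reproducing kernel, let $|\zeta|\to1^-$ using compactness to get \eqref{aaeq2}, and upgrade to arbitrary $f$ by subharmonicity of $|f|^p$. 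Only at that point can Proposition \ref{vanishing Carleson} be applied, and a final subharmonicity estimate for $|g'|^2$ converts the conclusion about $\nu_g(D(z,t))$ into the vanishing Carleson property of $\mu_g$ itself. Without this localization step your randomization has nothing to act on, so the necessity proof is incomplete as proposed.
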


\begin{proof} Sufficiency.
Let $\{f_{j}\}_{j=1}^{\infty}$ be any sequence in $L_{a}^{p}(\omega)$ with $\|f_{j}\|_{L_{a}^{p}(\omega)}\leq 1$ and $f_{j}$ converges to 0 uniformly on any compact subset of $\mathbb{D}$.
Since $\mu_{g}$ is a vanishing Carleson measure, for any $\varepsilon>0$, there exists $r_{0}\in(0,1)$ such that for any $I\subseteq\mathbb{T}$ with $|I|\leq 1-r_{0}$, it holds that
\begin{equation}\label{eq87}
\frac{\mu_{g}(S(I))}{|I|}<\varepsilon^{\frac{2p}{p-2}}.
\end{equation}
Moreover, there also exists $k_{0}\in\mathbb{N}$ such that $j>k_{0}$, $|f_{j}|<\varepsilon$ on $r_{0}\mathbb{D}$.
Let
\[
d\mu_{f_{j},g}(z)=|f_{j}(z)|^{2}|g'(z)|^{2}(1-|z|^{2})dA(z).
\]
By virtue of \eqref{eq87}, we obtain
\begin{equation}\label{n-a-1}
\frac{\mu_{g}(S(I)\backslash r_{0}\mathbb{D})}{|I|}<\varepsilon^{\frac{2p}{p-2}},\quad \textmd{for~any}~I\subseteq\mathbb{T} ~\textmd{with}~|I|\leq 1-r_{0}.
\end{equation}
In fact, we have
\begin{equation}\label{eq97}
\frac{\mu_{g}(S(I)\backslash r_{0}\mathbb{D})}{|I|}<2\varepsilon^{\frac{2p}{p-2}},\quad \textmd{for~any}~I\subseteq\mathbb{T}.
\end{equation}
For any $|I|>1-r_{0}$, there exists $k\in\mathbb{N}$ such that $(k-1)(1-r_{0})<|I|\leq k(1-r_{0})$. There also exist intervals $I_{l}$ on $\mathbb{T}$, $l=1,2,\cdots,k$ such that $|I_{l}|=1-r_{0}$, $l=1,2,\cdots,k$ and $I\subseteq\bigcup\limits_{l=1}^{k}I_{l}$. Notice that
$k(1-r_{0})<2|I|$. Combining with \eqref{n-a-1}, we deduce
\[
\mu_{g}(S(I)\backslash r_{0}\mathbb{D})
\leq\sum_{l=1}^{k}\mu_{g}(S(I_{l})\backslash r_{0}\mathbb{D})
<\varepsilon^{\frac{2p}{p-2}}k(1-r_{0})<\varepsilon^{\frac{2p}{p-2}}2|I|,
\]
which yields that \eqref{eq97} holds.

Let
\[
\kappa(z)=\frac{\widehat{\omega}(z)}{1-|z|},\quad z\in\mathbb{D}.
\]
Since $\chi_{\mathbb{D}\backslash r_{0}\mathbb{D}}\mu_{g}$ is a Carleson measure, by \cite[Theorem 9.4]{D}, Lemma \ref{prop-reverse}, $\textmd{H}\ddot{\textmd{o}}\textmd{lder's}$ inequality and \eqref{eq97}, for any $h\in H^{\frac{p}{p-2}}$, we have
\begin{align}\label{eq85}
\int_{\mathbb{D}\backslash r_{0}\mathbb{D}}|h(z)|d\mu_{f_{j},g}(z)&\leq
\bigg(\int_{\mathbb{D}\backslash r_{0}\mathbb{D}}|h(z)|^{\frac{p}{p-2}}d\mu_{g}(z)\bigg)^{\frac{p-2}{p}}\|f_{j}\|_{L_{a}^{p}(\kappa)}^{2}\nonumber\\
&\lesssim\|f_{j}\|_{L_{a}^{p}(\omega)}^{2}\|h\|_{H^{\frac{p}{p-2}}}\cdot
\bigg(\sup\limits_{ I\subseteq\mathbb{T}}
\frac{\mu_{g}(S(I)\backslash r_{0}\mathbb{D})}{|I|}\bigg)^{\frac{p-2}{p}}\nonumber\\
&\lesssim\varepsilon^{2}\|h\|_{H^{\frac{p}{p-2}}}.
\end{align}
Furthermore,
it follows from Lemma \ref{le1} that
\begin{equation}\label{eq86}
\|J_{g}(f_{j})\|_{H^{p}}^{p}\asymp\int_{\mathbb{T}}\bigg(\int_{\Gamma(\xi)}|f_{j}(z)|^{2}|g'(z)|^{2}dA(z)\bigg)^{\frac{p}{2}}d\xi.
\end{equation}
Recall
\begin{equation}\label{eq84}
\widetilde{\mu_{f_{j},g}}(\xi)=\int_{\Gamma(\xi)}\frac{d\mu_{f_{j},g}(z)}{1-|z|^{2}},\quad\xi\in\mathbb{T}.
\end{equation}
By \cite[Theorem E]{Pau-2016}, for any $j\in\mathbb{N}$, we have
\[
\|Id\|_{H^{\frac{p}{p-2}}\to L^{1}(\mu_{f_{j},g}
\chi_{\mathbb{D}\backslash r\mathbb{D}})}\asymp\bigg[\int_{\mathbb{T}}\bigg(\int_{\Gamma(\xi)\backslash r_{0}\mathbb{D}}\frac{d\mu_{f_{j},g}(z)}{1-|z|^{2}}\bigg)^{\frac{p}{2}}d\xi\bigg]^{\frac{2}{p}}.
\]
Together with \eqref{eq85}, \eqref{eq86} and \eqref{eq84}, for $j>k_{0}$, we deduce
\[
\|J_{g}(f_{j})\|_{H^{p}}\asymp\|\widetilde{\mu_{f_{j},g}}\|_{L^{\frac{p}{2}}(\mathbb{T})}^{\frac{1}{2}}
\lesssim\varepsilon.
\]
Hence, we know that $J_{g}$ is a compact operator as desired.
\vskip 0.1in
Necessity. Assume that $J_{g}:L_{a}^{p}(\omega)\to H^{p}$ is compact. Take a positive constant $\alpha$. Let $d\nu_{g}(z)=|g'(z)|^{2}(1-|z|)dA(z)$ and $A_{\nu_{g},\alpha}^{2}$ be the area integral operator from $L_{a}^{p}(\omega)$ to $L^{p}(\mathbb{T})$.
Since the dual space of $L^{\frac{p}{2}}(\mathbb{T})$ can be identified with $L^{\frac{p}{p-2}}(\mathbb{T})$,
together with Fubini's theorem, for any $h\in L^{\frac{p}{p-2}}(\mathbb{T})$ with $h\geq0$ and
$f\in L_{a}^{p}(\omega)$, we obtain
\begin{align}\label{aeq4}
\int_{\mathbb{T}}h(\xi)\big(A_{\nu_{g},\alpha}^{2}(f)(\xi)\big)^{2}d\xi
=\int_{\mathbb{D}}\frac{1}{1-|z|}\int_{I_{\alpha}(z)}h(\xi)d\xi|f(z)|^{2}d\nu_{g}(z).
\end{align}
Notice that for any $I\subseteq\mathbb{T}$, there exists $a\in\mathbb{D}$ such that $I=I_{\alpha}(a)$.
For any $a\in\mathbb{D}$, take testing functions
\[
h_{a}(\xi)=\chi_{I_{\alpha}(a)}(\xi),\quad\xi\in\mathbb{T}.
\]
Notice that
\begin{equation}\label{aeq5}
\|h_{a}\|_{L^{\frac{p}{p-2}}(\mathbb{T})}\asymp(1-|a|)^{\frac{p-2}{p}}
\end{equation}
and
\begin{equation}\label{aeq6}
\int_{I_{\alpha}(z)}h_{a}(\xi)d\xi\asymp1-|z|,\quad z\in\wedge_{\alpha}(I_{\alpha}(a)).
\end{equation}
Combining \eqref{aeq4}, \eqref{aeq5}, \eqref{aeq6} with $\textmd{H}\ddot{\textmd{o}}\textmd{lder's}$ inequality, for any $a\in\mathbb{D}$, we deduce
\begin{align*}
\int_{\wedge_{\alpha}(I_{\alpha}(a))}|f(z)|^{2}d\nu_{g}(z)
&\lesssim\int_{\mathbb{D}}\frac{1}{1-|z|}\int_{I_{\alpha}(z)}h_{a}(\xi)d\xi|f(z)|^{2}d\nu_{g}(z)\nonumber\\
&=\int_{\mathbb{T}}h_{a}(\xi)\big(A_{\nu_{g},\alpha}^{2}(f)(\xi)\big)^{2}d\xi\nonumber\\
&\leq \|h_{a}\|_{L^{\frac{p}{p-2}}(\mathbb{T})}\|A_{\nu_{g},\alpha}^{2}(f)\|_{L^{p}(\mathbb{T})}^{2}\nonumber\\
&\asymp(1-|a|)^{\frac{p-2}{p}}\|A_{\nu_{g},\alpha}^{2}(f)\|_{L^{p}(\mathbb{T})}^{2},
\end{align*}
from which it follows that
\begin{equation}\label{aeq7}
\frac{\int_{\wedge_{\alpha}(I_{\alpha}(a))}|f(z)|^{2}d\nu_{g}(z)}{(1-|a|)^{\frac{p-2}{p}}}
\lesssim\|A_{\nu_{g},\alpha}^{2}(f)\|_{L^{p}(\mathbb{T})}^{2}.
\end{equation}

For any $\zeta\in\mathbb{D}$ and $\beta>0$, consider test functions
\[
f_{\zeta,\beta}(z)
:=\frac{(1-|\zeta|)^{\frac{\beta+1}{p}}}{(1-\bar{\zeta}z)^{\frac{\beta+1}{p}}\widehat{\omega}(\zeta)^{\frac{1}{p}}(1-|\zeta|)^{\frac{1}{p}}}
,\quad z\in\mathbb{D}.
\]
Recall Lemma A(vi) in \cite{PRS} states that there exists $\gamma_{0}=\gamma_{0}(\omega)>0$ such that for any $\gamma\geq \gamma_{0}$, it holds that
\[
\int_{\mathbb{D}}\frac{\omega(z)}{|1-\bar{\zeta}z|^{\gamma+1}}dA(z)\asymp\frac{\widehat{\omega}(\zeta)}{(1-|\zeta|)^{\gamma}},
\quad \zeta\in\mathbb{D}.
\]
Hence, we can choose $\gamma$ large enough such that $\|f_{\zeta,\gamma}\|_{L_{a}^{p}(\omega)}\asymp1$ and $f_{\zeta,\gamma}$ converges uniformly on any compact subset of $\mathbb{D}$ as $|\zeta|\to 1^{-}$. Combining with \eqref{aeq7}, we obtain that for any $\zeta\in\mathbb{D}$ and $a\in\mathbb{D}$, it holds that
\begin{align}\label{aaeq1}
\frac{\int_{\wedge_{\alpha}(I_{\alpha}(a))}|f_{\zeta,\gamma}(z)|^{2}d\nu_{g}(z)}{(1-|a|)^{\frac{p-2}{p}}}
\lesssim\|A_{\nu_{g},\alpha}^{2}(f_{\zeta,\gamma})\|_{L^{p}(\mathbb{T})}^{2}.
\end{align}
In view of the compactness of $J_{g}$ and \eqref{area integral}, for any $\varepsilon>0$, there exists $r_{1}\in(0,1)$ such that for any $\zeta\in\mathbb{D}$ with $r_{1}\leq|\zeta|<1$, it holds that $\|A_{\nu_{g},\alpha}^{2}(f_{\zeta,\gamma})\|_{L^{p}(\mathbb{T})}^{2}<\varepsilon$.
Notice that there exists $r_{2}\in(r_{1},1)$ such that for any $a\in\mathbb{D}$ with $r_{2}\leq|a|<1$,
\[
D(z,r)\subseteq \mathbb{D}\backslash r_{1}\mathbb{D}, \quad\textmd{for~any}~z\in\wedge_{\alpha}(I_{\alpha}(a)).
\]
For $a\in\mathbb{D}$ with $r_{2}\leq|a|<1$, take $\zeta\in\mathbb{D}$ with
\[
\zeta\in \bigcup_{z\in\wedge_{\alpha}(I_{\alpha}(a))}D(z,r).
\]
Together with \eqref{aaeq1}, for any $a\in\mathbb{D}$ with $r_{2}\leq|a|<1$, we deduce
\begin{equation}\label{aaeq2}
\frac{\int_{\wedge_{\alpha}(I_{\alpha}(a))}\frac{1}{\widehat{\omega}(z)^{\frac{2}{p}}(1-|z|)^{\frac{2}{p}}}d\nu_{g}(z)}{(1-|a|)^{\frac{p-2}{p}}}
\lesssim\varepsilon.
\end{equation}
Next we show that for any $f\in L_{a}^{p}(\omega)$, $|f(z)|^{2}d\nu_{g}(z)$ is a vanishing $\frac{p-2}{p}$-Carleson measure.
By the subharmonicity of $|f|^{p}$, Lemma \ref{prop-reverse}, Lemma \ref{D-regular} and \eqref{aaeq2}, for any $a\in\mathbb{D}$ with $r_{2}\leq|a|<1$, we obtain
\begin{align}\label{aeq8}
\frac{\int_{\wedge_{\alpha}(I_{\alpha}(a))}|f(z)|^{2}d\nu_{g}(z)}{(1-|a|)^{\frac{p-2}{p}}}
&\lesssim\frac{\int_{\wedge_{\alpha}(I_{\alpha}(a))}
\bigg(\frac{1}{\widehat{\omega}(z)(1-|z|)}\int_{D(z,r)}|f(u)|^{p}\frac{\widehat{\omega}(u)}{1-|u|}dA(u)\bigg)^{\frac{2}{p}}d\nu_{g}(z)}
{(1-|a|)^{\frac{p-2}{p}}}\nonumber\\
&\leq
\|f\|_{L_{a}^{p}(\omega)}^{2}\frac{\int_{\wedge_{\alpha}(I_{\alpha}(a))}\frac{1}{\widehat{\omega}(z)^{\frac{2}{p}}(1-|z|)^{\frac{2}{p}}}d\nu_{g}(z)}
{(1-|a|)^{\frac{p-2}{p}}}\nonumber\\
&\lesssim\varepsilon\|f\|_{L_{a}^{p}(\omega)}^{2}.
\end{align}
It follows from Proposition \ref{vanishing Carleson} and \eqref{aeq8} that there exits $t\in(0,1)$ such that
\[
\bigg(\frac{\nu_{g}(D(z,t))}{\widehat{\omega}(z)(1-|z|)}\bigg)^{\frac{p}{p-2}}\frac{\widehat{\omega}(z)}{1-|z|}dA(z)
\]
is a vanishing Carleson measure.
By the subharmonicity of $|g'|^{2}$, we deduce
\[
|g'(z)|^{2}\frac{(1-|z|)^{2}}{\widehat{\omega}(z)}\lesssim\frac{1}{\widehat{\omega}(z)(1-|z|)}
\int_{D(z,t)}|g'(\zeta)|^{2}(1-|\zeta|)dA(\zeta)=\frac{\nu_{g}(D(z,t))}{\widehat{\omega}(z)(1-|z|)},
\]
which yields that
\[
\bigg(|g'(z)|^{2}\frac{(1-|z|)^{2}}{\widehat{\omega}(z)}\bigg)^{\frac{p}{p-2}}\frac{\widehat{\omega}(z)}{1-|z|}dA(z)
\]
is a vanishing Carleson measure.
Thus, we conclude that $\mu_{g}$ is a vanishing Carleson measure.
\end{proof}

\subsection{Case C}
Now we are devoted to dealing with the case $0<q<\infty$ and $p>\max\{2,q\}$.

For a separated sequence $\{a_{k}\}_{k=1}^{\infty}\subset\mathbb{D}$, let
$\nu=\sum\limits_{k=1}^{\infty}\delta_{a_{k}}$ where $\delta_{a_{k}}$ is
Dirac measure at point $a_{k}$, then we denote $T_{q,\nu}^{p}$ by $T_{q}^{p}(\{a_{k}\})$.
For $0<p<\infty$ and $0<q\leq\infty$, if
\[
\|\{b_{k}\}\|_{T_{q}^{p}(\{a_{k}\})}^{p}:=\int_{\mathbb{T}}\bigg(\sum_{a_{k}\in\Gamma(\xi)}|b_{k}|^{q}\bigg)^{\frac{p}{q}}d\xi<\infty,
\quad q\in(0,\infty)
\]
then we say the sequence $\{b_{k}\}_{k=1}^{\infty}\in T_{q}^{p}(\{a_{k}\})$, and if
\[
\|\{b_{k}\}\|_{T_{q}^{p}(\{a_{k}\})}^{p}:=\int_{\mathbb{T}}\bigg(\sup_{a_{k}\in\Gamma(\xi)}|b_{k}|\bigg)^{p}d\xi<\infty,
\quad q=\infty.
\]
we say the sequence $\{b_{k}\}_{k=1}^{\infty}\in T_{\infty}^{p}(\{a_{k}\})$.
\vskip 0.1in
The following two results characterize the dual space and the factorization of tent spaces of sequences respectively (see \cite{Milos} and \cite{Pau-2020}).
\begin{lemma} \label{le2} Let $1<p<\infty$ and $\{a_{k}\}_{k=1}^{\infty}$ be a separated sequence.
\begin{itemize}
\item If $1<q<\infty$, then
the dual space of $T_{q}^{p}(\{a_{k}\})$ can be identified with $T_{q'}^{p'}(\{a_{k}\})$ via the pairing,
\[
\langle\{b_{k}\},\{d_{k}\}\rangle_{T_{2}^{2}(\{a_{k}\})}=\sum\limits_{k=1}^{\infty}b_{k}\bar{d_{k}}(1-|a_{k}|^{2}),
\quad \{b_{k}\}\in T_{q}^{p}(\{a_{k}\}),~\{d_{k}\}\in T_{q'}^{p'}(\{a_{k}\}).
\]
\item If $0<q\leq1$, the the dual space of $T_{q}^{p}(\{a_{k}\})$ can be identified with $T_{\infty}^{p'}(\{a_{k}\})$ via the pairing,
\[
\langle\{b_{k}\},\{d_{k}\}\rangle_{T_{2}^{2}(\{a_{k}\})}=\sum\limits_{k=1}^{\infty}b_{k}\bar{d_{k}}(1-|a_{k}|^{2}),
\quad \{b_{k}\}\in T_{q}^{p}(\{a_{k}\}),~\{d_{k}\}\in T_{\infty}^{p'}(\{a_{k}\}).
\]
\end{itemize}
\end{lemma}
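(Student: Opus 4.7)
The plan is to establish this duality in two steps, following the classical Coifman--Meyer--Stein tent space blueprint but in the discrete setting: first I would show that the bilinear pairing is bounded, so that $T_{q'}^{p'}(\{a_k\})$ (respectively $T_\infty^{p'}(\{a_k\})$) embeds into $(T_q^p(\{a_k\}))^*$; then I would show every bounded linear functional on $T_q^p(\{a_k\})$ arises this way. Throughout I will exploit the identity $|I(a_k)|\asymp 1-|a_k|$ from \eqref{eq2} together with Fubini to convert sums over $k$ into integrals over $\xi\in\mathbb{T}$.

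For boundedness of the pairing, Fubini's theorem yields
\[
\sum_{k=1}^\infty |b_k\bar d_k|(1-|a_k|^2)\asymp \int_{\mathbb{T}}\sum_{a_k\in\Gamma(\xi)}|b_k||d_k|\, d\xi.
\]
When $1<q<\infty$, applying Hölder to the inner sum with exponents $q,q'$ and then Hölder on $\mathbb{T}$ with exponents $p,p'$ gives the estimate $\|\{b_k\}\|_{T_q^p}\|\{d_k\}\|_{T_{q'}^{p'}}$. When $0<q\leq 1$, I would instead use $\sum_{a_k\in\Gamma(\xi)}|b_k||d_k|\leq \bigl(\sup_{a_k\in\Gamma(\xi)}|d_k|\bigr)\sum_{a_k\in\Gamma(\xi)}|b_k|$ together with the $\ell^1\hookrightarrow \ell^q$ embedding (valid for $q\leq 1$ on counting measure) to bound the inner sum by $\bigl(\sup_{a_k\in\Gamma(\xi)}|d_k|\bigr)\bigl(\sum_{a_k\in\Gamma(\xi)}|b_k|^q\bigr)^{1/q}$, and then apply Hölder on $\mathbb{T}$ with exponents $p,p'$.

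For the converse (surjectivity onto the dual), the strategy bifurcates. In the reflexive range $1<q<\infty$, given $\Lambda\in (T_q^p(\{a_k\}))^*$ I would define $d_j:=\overline{\Lambda(e_j)}/(1-|a_j|^2)$ on the canonical basis $e_j=\{\delta_{kj}\}_k$, then show via standard mixed-norm duality (realizing $T_q^p(\{a_k\})$ as an $L^p(\mathbb{T};\ell^q)$-type space after an appropriate change of variables) that $\{d_j\}\in T_{q'}^{p'}(\{a_k\})$ with norm $\lesssim\|\Lambda\|$, and extend the identification to all sequences by density of finitely supported ones. In the range $0<q\leq 1$, $T_q^p(\{a_k\})$ is merely a quasi-Banach space and Hahn--Banach-type arguments are no longer available; here one constructs an atomic decomposition of $T_q^p(\{a_k\})$ (atoms being sequences supported in a single $\alpha$-tent $\wedge_\alpha(I)$ with a suitable $\ell^q$-normalization) and verifies that any $T_\infty^{p'}$-sequence acts boundedly on each atom with constant proportional to its $T_\infty^{p'}$-norm; taking suprema over atomic decompositions then recovers the full norm.

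The main obstacle is the case $0<q\leq 1$, where the lack of local convexity forces one to develop the atomic decomposition and check that the $L^\infty$-tent control on the dual side exactly matches the $\ell^q$-tent structure on atoms. Since this is the discrete analog of the classical Coifman--Meyer--Stein duality, and the statement is explicitly attributed to \cite{Milos,Pau-2020}, the cleanest route is to transport those arguments directly to the Dirac-measure setting $\nu=\sum_k\delta_{a_k}$, relying on $|I(a_k)|\asymp 1-|a_k|$ to replace the continuous integration used in the original proofs.
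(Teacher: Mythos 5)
The paper does not prove Lemma \ref{le2}: it is quoted from \cite{Milos} and \cite{Pau-2020}, so there is no internal argument to measure you against. Judged on its own terms, the first half of your plan is sound: the Fubini identity $\sum_k|b_k||d_k|(1-|a_k|^2)\asymp\int_{\mathbb{T}}\sum_{a_k\in\Gamma(\xi)}|b_k||d_k|\,d\xi$ follows from $\int_{\mathbb{T}}\chi_{\{a_k\in\Gamma(\xi)\}}\,d\xi=|I(a_k)|\asymp 1-|a_k|$, and the two H\"older applications (resp.\ the bound $\sum|b_k|\le(\sum|b_k|^q)^{1/q}$ for $q\le1$ --- note the embedding is $\ell^q\hookrightarrow\ell^1$, not the reverse as you wrote) give the boundedness of the pairing exactly as in the cited sources.

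The converse direction is where your sketch has real gaps. For $1<q<\infty$, realizing $T_q^p(\{a_k\})$ inside $L^p(\mathbb{T};\ell^q)$ via $b\mapsto(b_k\chi_{\Gamma(\xi)}(a_k))_k$ only identifies it with a closed \emph{subspace}; a functional extends by Hahn--Banach to an element of $L^{p'}(\mathbb{T};\ell^{q'})$ that need not be of tent form, and one must still produce the sequence $\{d_k\}$ by averaging that representative over $\xi\in I(a_k)$ and verify that this averaging is bounded on $T_{q'}^{p'}(\{a_k\})$; this is the actual content of the proof in \cite{Milos} and is absent from your outline. More seriously, for $0<q\le1$ the atomic decomposition you invoke is not the right tool: atoms supported in single tents characterize tent spaces with small \emph{outer} exponent $p\le1$, not small inner exponent $q$, and $T_q^p$ with $p>1$ admits no such decomposition. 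The standard argument (Coifman--Meyer--Stein for $(T_1^p)^*=T_\infty^{p'}$, adapted to the discrete setting in \cite{Pau-2020}) instead fixes, for each $\xi$, an index $k(\xi)$ attaining $\sup_{a_k\in\Gamma(\xi)}|d_k|$ and tests $\Lambda$ against sequences $b_k=(1-|a_k|^2)^{-1}\int_{\{\xi:\,k(\xi)=k\}}h(\xi)\,d\xi$ built from an arbitrary $h\in L^p(\mathbb{T})$, for which the inner $\ell^q$- and $\ell^1$-sums over $\Gamma(\xi)$ are comparable. Your plan is repairable along these lines (or, as you note, by citing the sources outright, which is exactly what the paper does), but as written the hard direction does not go through.
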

\begin{lemma} \label{le4}  Let $0<p,q<\infty$ and $\{a_{k}\}_{k=1}^{\infty}$ be an $r$-lattice. If $p<p_{1},p_{2}<\infty$, $q<q_{1},q_{2}<\infty$, $\frac{1}{p_{1}}+\frac{1}{p_{2}}=\frac{1}{p}$ and $\frac{1}{q_{1}}+\frac{1}{q_{2}}=\frac{1}{q}$.
Then
\[
T_{q}^{p}(\{a_{k}\})=T_{q_{1}}^{p_{1}}(\{a_{k}\})\cdot T_{q_{2}}^{p_{2}}(\{a_{k}\}),
\]
where $T_{q_{1}}^{p_{1}}(\{a_{k}\})\cdot T_{q_{2}}^{p_{2}}(\{a_{k}\})=\{\{b_{k}\}:~b_{k}=c_{k}d_{k},~\{c_{k}\}\in T_{q_{1}}^{p_{1}}(\{a_{k}\}),\{d_{k}\}\in T_{q_{2}}^{p_{2}}(\{a_{k}\})\}$. Moreover, for any
$\{b_{k}\}\in T_{q}^{p}(\{a_{k}\})$, it holds that
\[
\inf\|\{c_{k}\}\|_{T_{q_{1}}^{p_{1}}(\{a_{k}\})}\cdot
\|\{d_{k}\}\|_{T_{q_{2}}^{p_{2}}(\{a_{k}\})}\lesssim \|\{b_{k}\}\|_{T_{q}^{p}(\{a_{k}\})},
\]
where the infimum above is taken over all possible factorizations of $\{b_{k}\}$.
\end{lemma}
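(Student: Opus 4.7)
The plan is to prove the two inclusions $T^{p_1}_{q_1}(\{a_k\})\cdot T^{p_2}_{q_2}(\{a_k\})\subseteq T^p_q(\{a_k\})$ and $T^p_q(\{a_k\})\subseteq T^{p_1}_{q_1}(\{a_k\})\cdot T^{p_2}_{q_2}(\{a_k\})$ separately, with the accompanying norm estimate encoded in the second inclusion. The easy direction is a double application of H\"older's inequality. Given $\{c_k\}\in T^{p_1}_{q_1}(\{a_k\})$ and $\{d_k\}\in T^{p_2}_{q_2}(\{a_k\})$, for each $\xi\in\mathbb{T}$ I would apply H\"older in the inner sum with conjugate exponents $q_1/q$ and $q_2/q$ (this uses $1/q_1+1/q_2=1/q$), and then apply H\"older in the outer integral over $\mathbb{T}$ with conjugate exponents $p_1/p$ and $p_2/p$ (this uses $1/p_1+1/p_2=1/p$). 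This yields $\|\{c_kd_k\}\|_{T^p_q(\{a_k\})}\le \|\{c_k\}\|_{T^{p_1}_{q_1}(\{a_k\})}\|\{d_k\}\|_{T^{p_2}_{q_2}(\{a_k\})}$.

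For the factorization direction, I would start from the auxiliary function
\[
F(\xi)=\bigg(\sum_{a_k\in\Gamma(\xi)}|b_k|^q\bigg)^{1/q},\qquad \xi\in\mathbb{T},
\]
so that $\|F\|_{L^p(\mathbb{T})}=\|\{b_k\}\|_{T^p_q(\{a_k\})}$. To each lattice point $a_k$ I would associate a weight $\Lambda_k$ built from $F$ on tents meeting $a_k$ (for instance $\Lambda_k=\operatorname{ess\,sup}\{F(\xi):a_k\in\Gamma(\xi)\}$, which is finite a.e. and controlled pointwise by a non-tangential maximal function of $F$), and then set
\[
c_k=|b_k|^{q/q_1}\,\Lambda_k^{\alpha},\qquad d_k=\mathrm{sgn}(b_k)\,|b_k|^{q/q_2}\,\Lambda_k^{-\alpha},
\]
so that $c_kd_k=b_k$, with the scalar $\alpha=p/p_1-q/q_1$ calibrated via $1/p_1+1/p_2=1/p$ and $1/q_1+1/q_2=1/q$ (note $p/p_2-q/q_2=-\alpha$, which forces the exponents of $\Lambda_k$ in $c_k$ and $d_k$ to be opposite). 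The two relations $\sum_{a_k\in\Gamma(\xi)}|c_k|^{q_1}\lesssim F(\xi)^{pq_1/p_1}$ and $\sum_{a_k\in\Gamma(\xi)}|d_k|^{q_2}\lesssim F(\xi)^{pq_2/p_2}$ (one of which uses $\Lambda_k\ge F(\xi)$ on the relevant tents, the other a reverse comparison obtained via the finite overlap of pseudohyperbolic disks $D(a_k,r)$ coming from the $r$-lattice property) would then yield $\|\{c_k\}\|_{T^{p_1}_{q_1}}^{p_1}+\|\{d_k\}\|_{T^{p_2}_{q_2}}^{p_2}\lesssim \|F\|_{L^p}^p$, i.e., the required infimum bound.

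The main obstacle is the correct calibration of the weight $\Lambda_k$: because $\alpha=p/p_1-q/q_1$ can have either sign, the step $\Lambda_k\ge F(\xi)$ when $a_k\in\Gamma(\xi)$ only yields one of the two pointwise inequalities for free, and the other must be recovered in an averaged sense. The standard way around this is to use the $r$-lattice finite-overlap property (each $z\in\mathbb{D}$ lies in at most $N(r)$ of the disks $D(a_k,r)$) to discretize $F(\xi)^q$ itself, together with the Hardy--Littlewood maximal theorem on $\mathbb{T}$, which controls $\Lambda_k$ by a function equivalent to $F$ in $L^p$. If this constructive route proves too technical, a cleaner alternative is to combine the duality $(T^p_q)^{*}=T^{p'}_{q'}$ from Lemma~\ref{le2} with a Maurey--K\"othe space factorization argument, bypassing the explicit choice of $\Lambda_k$ at the price of more abstraction.
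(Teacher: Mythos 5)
The paper does not actually prove Lemma \ref{le4}; it is quoted from \cite{Milos} and \cite{Pau-2020}, so your argument has to stand on its own. Your easy inclusion $T_{q_1}^{p_1}(\{a_k\})\cdot T_{q_2}^{p_2}(\{a_k\})\subseteq T_{q}^{p}(\{a_k\})$ by applying H\"older first in $k$ with exponents $q_1/q,\,q_2/q$ and then in $\xi$ with exponents $p_1/p,\,p_2/p$ is correct and complete, and your ansatz $c_k=|b_k|^{q/q_1}\Lambda_k^{\alpha}$, $d_k=\mathrm{sgn}(b_k)|b_k|^{q/q_2}\Lambda_k^{-\alpha}$ with $\alpha=p/p_1-q/q_1$ is the right shape with the right calibration (and the balanced case $\alpha=0$ is indeed immediate). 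The genuine gap is precisely the point you flag and then wave at: for $\alpha>0$, the factor $\{c_k\}$ needs an upper bound $\Lambda_k\lesssim (M(F^r)(\xi))^{1/r}$ for $a_k\in\Gamma(\xi)$, while the factor $\{d_k\}$ needs a \emph{lower} bound $\Lambda_k\gtrsim F(\xi)$ for $a_k\in\Gamma(\xi)$, and no choice of $\Lambda_k$ of the kind you propose gives both. The essential supremum of $F$ over $I(a_k)$ gives the lower bound but is not dominated by the Hardy--Littlewood maximal function (a tall narrow spike of $F$ inside $I(a_k)$ defeats it), so your parenthetical claim that it is "controlled pointwise by a non-tangential maximal function of $F$" is false. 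An $L^r$-average over $I(a_k)$ gives the upper bound, but its only available lower bound is $\Lambda_k\ge |b_k|$, and inserting that yields $\sum_{a_k\in\Gamma(\xi)}|d_k|^{q_2}\le\sum_{a_k\in\Gamma(\xi)}|b_k|^{\,q-\alpha q_2}$ with $q-\alpha q_2<q$, which would require the false embedding of $T^p_q$ into a tent space with a strictly smaller inner exponent. The finite-overlap property of the lattice and the maximal theorem only ever control $\Lambda_k$ from above, so they cannot recover the missing half "in an averaged sense" as you assert.

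The known proofs (the sequence-space adaptation in \cite{Pau-2020} of Cohn--Verbitsky's factorization of tent spaces) close exactly this gap by a stopping-time construction: $\mathbb{T}$ is decomposed into maximal (dyadic) regions on which the averages of $F$ are comparable, $\Lambda_k$ is taken constant on each stopping region, and the two-sided comparison with $F$ then holds generation by generation with geometric decay; alternatively one first establishes the special case in which one factor lies in $T^{p_2}_{\infty}$ and composes. That construction is the substantive content of the lemma and is absent from your sketch. Your fallback via duality also does not apply as stated: Lemma \ref{le2} requires $1<p<\infty$ (and the pairing argument degenerates for $q\le 1$), whereas Lemma \ref{le4} is asserted for all $0<p,q<\infty$.
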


\vskip 0.1in
We also need the following useful estimates.
\vskip 0.1in
\begin{lemma} \label{le3} \cite[Lemma 4]{Milos} Let $0<s<\infty$, $\iota>\max\{1,\frac{1}{s}\}$ and $\mu$ be a positive measure on $\mathbb{D}$. Then
\[
\int_{\mathbb{T}}\bigg(\int_{\mathbb{D}}\bigg(\frac{1-|z|^{2}}{|1-z\bar{\xi}|}\bigg)^{\iota}d\mu(z)\bigg)^{s}d\xi
\asymp\int_{\mathbb{T}}\mu(\Gamma(\xi))^{s}d\xi.
\]
\end{lemma}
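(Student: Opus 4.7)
The plan is to establish the two-sided estimate by proving the two inequalities separately, with the lower bound $\gtrsim$ being elementary and the upper bound $\lesssim$ carrying the real content. For the easy direction, I would observe that on any Stolz region $z\in\Gamma(\xi)$ one has $|1-z\bar\xi|\asymp 1-|z|$, so the kernel $\bigl(\tfrac{1-|z|^2}{|1-z\bar\xi|}\bigr)^{\iota}$ is comparable to $1$ on $\Gamma(\xi)$. Restricting the inner integral to $\Gamma(\xi)$ therefore yields the pointwise bound $\int_{\mathbb{D}}\bigl(\tfrac{1-|z|^2}{|1-z\bar\xi|}\bigr)^{\iota}d\mu(z)\gtrsim \mu(\Gamma(\xi))$, and raising to the $s$-th power and integrating over $\mathbb{T}$ gives the $\gtrsim$ inequality.

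For the upper bound I would first prove a Poisson-type pointwise inequality
\[
\int_{\mathbb{T}}\Bigl(\frac{1-|z|^{2}}{|1-z\bar\xi|}\Bigr)^{\iota}h(\xi)\,d\xi \;\lesssim\; (1-|z|)\,Mh(z^{*}),
\]
valid for nonnegative $h$ and $\iota>1$, where $M$ denotes the Hardy--Littlewood maximal operator on $\mathbb{T}$ and $z^{*}=z/|z|$. This follows from the elementary bound $|1-z\bar\xi|\asymp (1-|z|)+|\xi-z^{*}|$ together with a dyadic decomposition of $\mathbb{T}$ into annuli $\{|\xi-z^{*}|\asymp 2^{j}(1-|z|)\}$; the resulting geometric series converges exactly when $\iota>1$. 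In the Banach range $s>1$, I would then use $L^{s}$--$L^{s'}$ duality together with Fubini: for $h\ge 0$ with $\|h\|_{L^{s'}}=1$,
\[
\int_{\mathbb{T}}\tilde\mu(\xi)h(\xi)\,d\xi \;\lesssim\; \int_{\mathbb{D}}(1-|z|)\,Mh(z^{*})\,d\mu(z).
\]
Rewriting $1-|z|\asymp|I(z)|$ and applying Fubini once more, the right side is controlled by $\int_{\mathbb{T}} Mh(\xi)\mu(\Gamma_{\alpha}(\xi))\,d\xi$ for a slightly enlarged aperture, and Hölder together with the $L^{s'}$-boundedness of $M$ closes the argument.

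The genuinely delicate regime is $0<s\le 1$, where duality is unavailable and the sharper condition $\iota>1/s$ is required. Here my plan is to discretize: pick a pseudohyperbolic $r$-lattice $\{z_{k}\}$ with cells $D_{k}=D(z_{k},r)$, use the Whitney-type estimate $\tilde\mu(\xi)\asymp\sum_{k}K(z_{k},\xi)\mu(D_{k})$, and exploit the quasi-norm inequality $(a+b)^{s}\le a^{s}+b^{s}$ to get
\[
\int_{\mathbb{T}}\tilde\mu(\xi)^{s}d\xi \;\lesssim\; \sum_{k}\mu(D_{k})^{s}\int_{\mathbb{T}}K(z_{k},\xi)^{s}\,d\xi \;\asymp\; \sum_{k}(1-|z_{k}|)\,\mu(D_{k})^{s},
\]
where the integral $\int_{\mathbb{T}}K(z_{k},\xi)^{s}d\xi\asymp 1-|z_{k}|$ is precisely the place where $\iota s>1$ is needed. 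Finally, identifying $(1-|z_{k}|)\asymp|I(z_{k})|$ and noting that $\{\xi:z_{k}\in\Gamma(\xi)\}\asymp I(z_{k})$, the sum can be recognized (after a sub-additivity step on the right-hand side, since $s\le 1$) as $\int_{\mathbb{T}}\mu(\Gamma(\xi))^{s}d\xi$ up to a bounded-overlap factor.

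The main obstacle I expect is bookkeeping in the quasi-Banach case: ensuring that each dyadic/geometric series converges under the single hypothesis $\iota>\max\{1,1/s\}$, and matching the discrete sum $\sum_{k}(1-|z_{k}|)\mu(D_{k})^{s}$ to the continuous tent integral $\int\mu(\Gamma(\xi))^{s}d\xi$ without losing sharpness. The dichotomy in the threshold between $s>1$ (where $\iota>1$ suffices via duality and $M$-boundedness) and $s\le 1$ (where $\iota>1/s$ is needed to make the kernel $K^{s}$ integrable on $\mathbb{T}$) is exactly the tension that the hypothesis $\iota>\max\{1,1/s\}$ is designed to resolve.
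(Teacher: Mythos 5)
First, a point of reference: the paper does not prove this lemma at all --- it is quoted verbatim from [Ars99, Lemma 4] --- so there is no in-paper argument to compare against, and your proposal must be judged on its own merits. The easy direction ($\gtrsim$) is correct, and your plan for $s>1$ is essentially the standard duality argument and is sound; the only small point to tighten there is that $Mh(z^{*})$ is not pointwise dominated by $Mh(\xi)$ for $\xi\in I(z)$, but since your dyadic decomposition only ever uses averages of $h$ at scales $\geq 1-|z|$, the same computation in fact bounds $\int_{\mathbb{T}}K(z,\xi)h(\xi)\,d\xi$ by $(1-|z|)\inf_{\eta\in I(z)}Mh(\eta)$, after which Fubini and the $L^{s'}$-boundedness of $M$ close the argument as you describe.

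The case $0<s<1$, however, contains a genuine gap. Your discretization plus the subadditivity $(\sum_k a_k)^{s}\le\sum_k a_k^{s}$ yields the (true) bound $\int_{\mathbb{T}}\tilde\mu(\xi)^{s}d\xi\lesssim\sum_{k}(1-|z_{k}|)\mu(D_{k})^{s}$, but the final claim that this sum ``can be recognized as $\int_{\mathbb{T}}\mu(\Gamma(\xi))^{s}d\xi$'' is false. Since $|\{\xi:z_{k}\in\Gamma(\xi)\}|\asymp 1-|z_{k}|$, the sum equals $\int_{\mathbb{T}}\big(\sum_{z_{k}\in\Gamma(\xi)}\mu(D_{k})^{s}\big)d\xi$, and for $s<1$ the inner sum satisfies $\sum_{z_{k}\in\Gamma(\xi)}\mu(D_{k})^{s}\ge\big(\sum_{z_{k}\in\Gamma(\xi)}\mu(D_{k})\big)^{s}\gtrsim\mu(\Gamma(\xi))^{s}$: the subadditivity you invoke points in the wrong direction, and the discrepancy is unbounded. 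Concretely, let $N\in\mathbb{N}$ and put mass $N^{-1}$ on each lattice cell meeting the annulus $A_{n}=\{1-2^{-n}\le|z|<1-2^{-n-1}\}$ for $1\le n\le N$ (about $2^{n}$ cells per annulus) and nothing elsewhere. Each $\Gamma(\xi)$ meets $O(1)$ cells per annulus, so $\mu(\Gamma(\xi))\asymp 1$ and $\int_{\mathbb{T}}\mu(\Gamma(\xi))^{s}d\xi\asymp 1$, whereas $\sum_{k}(1-|z_{k}|)\mu(D_{k})^{s}\asymp\sum_{n=1}^{N}2^{n}\cdot 2^{-n}\cdot N^{-s}=N^{1-s}\to\infty$. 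Thus your intermediate bound, while valid under $\iota s>1$, is too lossy to ever be dominated by the right-hand side of the lemma. A correct treatment of $0<s<1$ needs a different mechanism --- for instance, splitting the kernel over dyadic apertures to get $\tilde\mu(\xi)\lesssim\sum_{j\ge0}2^{-j\iota}\mu(\Gamma_{2^{j}}(\xi))$, raising to the power $s$, and summing the geometric series against the change-of-aperture theorem for $\int_{\mathbb{T}}\mu(\Gamma_{\alpha}(\xi))^{s}d\xi$ (whose constant grows polynomially in $\alpha$, and whose interaction with $2^{-j\iota s}$ is precisely where $\iota>1/s$ is needed), or the stopping-time/good-$\lambda$ argument of Luecking and Arsenovi\'{c}. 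Your diagnosis that $\iota>1/s$ is the integrability threshold for $K(z,\cdot)^{s}$ on $\mathbb{T}$ is correct, but that observation alone does not produce a proof in the quasi-Banach range.
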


Using similar analysis to that in \cite[Proposition 14]{PRS}, we get the following result.
\begin{proposition} \label{le5} Let $1<p<\infty$ and $\omega\in\mathcal{D}$. Assume that $\{a_{k}\}_{k=1}^{\infty}\subseteq \mathbb{D}\backslash\{0\}$
is a separated sequence. Then
\[
F:=\sum_{k=1}^{\infty}c_{k}\frac{K_{a_{k}}^{\omega}}{\|K_{a_{k}}^{\omega}\|_{L_{a}^{2-\frac{1}{p}}(\omega)}^{2-\frac{1}{p}}}
\in H(\mathbb{D}).
\]
Moreover, $\|F\|_{L_{a}^{p}(\omega)}\lesssim\|\{c_{k}\}_{k=1}^{\infty}\|_{l^{p}}$ for all $\{c_{k}\}_{k=1}^{\infty}\in l^{p}$.
\end{proposition}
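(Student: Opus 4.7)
The plan is a duality argument that reduces the $L_a^p(\omega)$-bound on $F$ to a discrete trace inequality for $L_a^{p'}(\omega)$-functions along the separated sequence $\{a_k\}$. Concretely, since for $\omega\in\mathcal{D}$ and $1<p'<\infty$ the Bergman projection $P_\omega$ is bounded on $L^{p'}(\omega)$ (as in the Pel\'aez--R\"atty\"a--Sierra theory \cite{PRS}), the $L^2(\omega)$-pairing identifies $(L_a^p(\omega))^{*}$ with $L_a^{p'}(\omega)$ up to equivalence of norms. Consequently it suffices to estimate $\int_{\mathbb D} F\,\bar g\,\omega\,dA$ for the partial sums $F_N=\sum_{k=1}^N c_k\,K_{a_k}^\omega/\|K_{a_k}^\omega\|_{L_a^{2-1/p}(\omega)}^{2-1/p}$ against test functions $g\in L_a^{p'}(\omega)$ with $\|g\|_{L_a^{p'}(\omega)}\leq 1$.

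\textbf{The trace inequality.} The key step is to prove that, for every $g\in L_a^{p'}(\omega)$,
\[
\sum_{k=1}^\infty |g(a_k)|^{p'}\,\widehat{\omega}(a_k)(1-|a_k|)\;\lesssim\;\|g\|_{L_a^{p'}(\omega)}^{p'}.
\]
For a fixed small $r\in(0,1)$, the subharmonicity of $|g|^{p'}$ yields $|g(a_k)|^{p'}\lesssim(1-|a_k|)^{-2}\int_{D(a_k,r)}|g|^{p'}\,dA$. Multiplying by $\widehat{\omega}(a_k)(1-|a_k|)$ and using Lemma~\ref{D-regular} together with $1-|a_k|\asymp 1-|\zeta|$ on $D(a_k,r)$, one checks $\widehat{\omega}(a_k)(1-|a_k|)^{-1}\asymp\kappa(\zeta):=\widehat{\omega}(\zeta)/(1-|\zeta|)$ uniformly for $\zeta\in D(a_k,r)$, so that
\[
|g(a_k)|^{p'}\widehat{\omega}(a_k)(1-|a_k|)\;\lesssim\;\int_{D(a_k,r)}|g(\zeta)|^{p'}\kappa(\zeta)\,dA(\zeta).
\]
Summing in $k$ and exploiting the finite overlap (depending only on $r$) of the disks $\{D(a_k,r)\}$ coming from the separation of $\{a_k\}$, the claim follows from Lemma~\ref{prop-reverse}.

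\textbf{Combining duality with the trace inequality.} By the reproducing property of the kernels $K_{a_k}^\omega$, for every $g\in L_a^{p'}(\omega)$,
\[
\int_{\mathbb D}F_N(z)\,\overline{g(z)}\,\omega(z)\,dA(z)\;=\;\sum_{k=1}^N c_k\,\frac{\overline{g(a_k)}}{\|K_{a_k}^\omega\|_{L_a^{2-1/p}(\omega)}^{2-1/p}}.
\]
Equation~(\ref{reproducing-formula}) applied with exponent $s=2-1/p$ gives $\|K_{a_k}^\omega\|_{L_a^{2-1/p}(\omega)}^{(2-1/p)p'}\asymp\widehat{\omega}(a_k)^{-1}(1-|a_k|)^{-1}$, so H\"older's inequality in $\ell^p$ combined with the trace inequality yields $\bigl|\int F_N\,\bar g\,\omega\,dA\bigr|\lesssim\|\{c_k\}_{k=1}^N\|_{\ell^p}\,\|g\|_{L_a^{p'}(\omega)}$. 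Duality then delivers $\|F_N\|_{L_a^p(\omega)}\lesssim\|\{c_k\}_{k=1}^N\|_{\ell^p}$, and the same bound applied to $F_N-F_M$ shows $\{F_N\}$ is Cauchy in the Banach space $L_a^p(\omega)$; subharmonicity upgrades this to uniform convergence on compact subsets of $\mathbb D$, so the limit $F$ is analytic and obeys the desired estimate. The \emph{main obstacle} is not the subharmonicity step, which is routine, but the duality identification, which hinges on the boundedness of $P_\omega$ on $L^{p'}(\omega)$ for $\mathcal{D}$ weights; once this is taken from \cite{PRS}, the rest is a clean application of the kernel norm estimate, the regularity of $\kappa$, and finite overlap.
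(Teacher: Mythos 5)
Your proof is correct and follows essentially the same route as the paper's, which itself defers to the duality argument of \cite[Proposition 14]{PRS}: test against the dual space, apply the reproducing formula and H\"older's inequality using the kernel estimate \eqref{reproducing-formula}, and control the resulting discrete sum by the trace inequality $\sum_k|g(a_k)|^{p'}\widehat{\omega}(a_k)(1-|a_k|)\lesssim\|g\|_{L_a^{p'}(\omega)}^{p'}$, obtained from subharmonicity, Lemma~\ref{D-regular}, the finite overlap of the disks $D(a_k,r)$, and Lemma~\ref{prop-reverse}. The one point to tighten is your attribution of the duality $(L_a^{p}(\omega))^{*}\simeq L_a^{p'}(\omega)$ under the $\omega$-pairing to \cite{PRS}, which treats regular weights; for a general $\mathcal{D}$ weight you should either invoke the boundedness of $P_\omega$ on $L^{p'}(\omega)$ from \cite{PR19}, or pass to the equivalent regular weight $\kappa(z)=\widehat{\omega}(z)/(1-|z|)$ via Lemma~\ref{prop-reverse} as the paper does (noting that in the latter case the reproducing identity must be handled with respect to the correct pairing).
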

\begin{proof} Let $\kappa(z)=\frac{\widehat{\omega}(z)}{1-|z|},~z\in\mathbb{D}$ and $\{c_{k}\}_{k=1}^{\infty}\in l^{p}$. From Lemma \ref{prop-reverse}, we know that $\kappa$ is regular. Assume that $\rho\in(0,1)$. For any $z\in\mathbb{D}$ with $0<|z|\leq\rho$, it follows from
Lemma \ref{regular-lemma}, $\textmd{H}\ddot{\textmd{o}}\textmd{lder's}$ inequality and \eqref{reproducing-formula} that
\begin{align*}
|F(z)|
\leq&\bigg(\sum_{k=1}^{\infty}|c_{k}|^{p}\bigg)^{\frac{1}{p}}
\bigg[\sum_{k=1}^{\infty}\bigg(\frac{|K_{a_{k}}^{\omega}(z)|}
{\|K_{a_{k}}^{\omega}\|_{L_{a}^{2-\frac{1}{p}}(\omega)}^{2-\frac{1}{p}}}\bigg)^{p'}\bigg]^{\frac{1}{p'}}\nonumber\\
\lesssim&\bigg(\sum_{k=1}^{\infty}|c_{k}|^{p}\bigg)^{\frac{1}{p}}\sup\limits_{k\geq1}|K_{a_{k}}^{\omega}(z)|
\bigg(\sum_{k=1}^{\infty}\widehat{\omega}(a_{k})(1-|a_{k}|)\bigg)^{\frac{1}{p'}}\nonumber\\
\asymp&\bigg(\sum_{k=1}^{\infty}|c_{k}|^{p}\bigg)^{\frac{1}{p}}\sup\limits_{k\geq1}|K_{a_{k}}^{\omega}(z)|
\bigg(\sum_{k=1}^{\infty}\kappa(D(a_{k},r))\bigg)^{\frac{1}{p'}}\nonumber\\
\leq& C\bigg(\sum_{k=1}^{\infty}|c_{k}|^{p}\bigg)^{\frac{1}{p}},
\end{align*}
where $r\in(0,1)$ and $C$ is a constant which only depends on $p,\rho,r$ and $\omega$.
Thus, we know that $F$ is analytic on $\mathbb{D}$. Take Lemma \ref{prop-reverse} into account, we deduce
$
\|F\|_{L_{a}^{p}(\omega)}\asymp\|F\|_{L_{a}^{p}(\kappa)}.
$
Since $(L_{a}^{p}(\kappa))^{*}\simeq L_{a}^{p'}(\kappa)$ (see \cite[Theorem D]{PRS}), together with the property of subharmonicity, similar analysis as above gives the desired result.
\end{proof}

Now we give the main result of this subsection.
\begin{proposition} \label{t-c-3} Let $0<q<\infty$, $p>\max\{2,q\}$, $\omega\in\mathcal{D}$ and $g\in H(\mathbb{D})$. Then the Volterra type
integration operator $J_{g}:L_{a}^{p}(\omega)\to H^{q}$ is compact if and only if $J_{g}$ is bounded if and only if
$g'\in T_{\frac{2p}{p-2},\widehat{\omega},\frac{2}{p-2}}^{\frac{pq}{p-q}}$. Moreover,
$\|J_{g}\|_{L_{a}^{p}(\omega)\to H^{q}}\asymp\|g'\|_{T_{\frac{2p}{p-2},\widehat{\omega},\frac{2}{p-2}}^{\frac{pq}{p-q}}}$.
\end{proposition}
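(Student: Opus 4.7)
The plan is to prove, in order, three implications: (a) tent space membership of $g'$ implies boundedness of $J_g$ together with the upper norm estimate; (b) boundedness implies membership with the matching lower norm estimate; and (c) boundedness implies compactness. Combined with the trivial implication that compactness implies boundedness, these yield all three equivalences along with the claimed norm comparison.

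For (a), I would begin from the area-function identity \eqref{area integral},
\[
\|J_g f\|_{H^q}^q \asymp \int_{\mathbb{T}} \Bigl(\int_{\Gamma(\xi)} |f(z)|^2|g'(z)|^2 dA(z)\Bigr)^{q/2} d\xi.
\]
Set $w(z) = \widehat{\omega}(z)/(1-|z|)^2$ and split the integrand as $(|f|^2 w^{2/p})\cdot(|g'|^2 w^{-2/p})$. Apply H\"older's inequality inside the cone with conjugate exponents $p/2$ and $p/(p-2)$, and then apply H\"older in $\xi\in\mathbb{T}$ with exponents $p/q$ and $p/(p-q)$; both are admissible since $p>\max\{2,q\}$. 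By Fubini's theorem combined with \eqref{eq2} and Lemma \ref{prop-reverse}, the first resulting factor is comparable to $\|f\|_{L_a^p(\omega)}^q$. The second factor coincides exactly with the $q$-th power of $\|g'\|_{T^{pq/(p-q)}_{2p/(p-2),\widehat{\omega},2/(p-2)}}$, since $w^{-2/(p-2)}\asymp((1-|z|^2)^2/\widehat{\omega}(z))^{2/(p-2)}$ and the exponent $q(p-2)/(2(p-q))$ produced by the outer H\"older step is precisely the outer-to-inner ratio in the tent space definition.

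For (b), discretize using a $\delta$-lattice $\{a_k\}$ and test the boundedness inequality against randomized reproducing-kernel sums. For $\{\lambda_k\}\in\ell^p$ set $F_t(z)=\sum_k r_k(t)\lambda_k K^\omega_{a_k}(z)/\|K^\omega_{a_k}\|^{2-1/p}_{L^{2-1/p}_a(\omega)}$; Proposition \ref{le5} gives $\|F_t\|_{L_a^p(\omega)}\lesssim\|\{\lambda_k\}\|_{\ell^p}$ uniformly in $t\in[0,1]$. Inserting $F_t$ into the area-function identity, integrating in $t$ via Kahane's inequality (Lemma \ref{le-Kahane}) on the outer $L^q(\mathbb{T})$ norm followed by Khinchine's inequality (Lemma \ref{le-Khinchine}) inside the cone, and replacing $|K^\omega_{a_k}(z)|$ by its diagonal value via subharmonicity and \eqref{reproducing-formula}, one arrives at
\[
\int_{\mathbb{T}} \Bigl(\sum_{a_k\in\Gamma(\xi)} |\lambda_k|^2\,\frac{|g'(a_k)|^2(1-|a_k|)^{2-2/p}}{\widehat{\omega}(a_k)^{2/p}}\Bigr)^{q/2} d\xi \lesssim \|\{\lambda_k\}\|_{\ell^p}^q.
\]
Setting $c_k:=|g'(a_k)|(1-|a_k|)^{1-1/p}/\widehat{\omega}(a_k)^{1/p}$, this identifies $\{c_k\}$ as a pointwise multiplier from $\ell^p$ into the tent sequence space $T^q_2(\{a_k\})$. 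The tent-sequence duality (Lemma \ref{le2}) and the factorization identity (Lemma \ref{le4}) then force $\{c_k\}$ to lie in the corresponding discrete tent space, and a standard lattice-to-continuous transfer based on Lemma \ref{D-regular} and a submean-value bound for $|g'|^2$ upgrades this to $g' \in T^{pq/(p-q)}_{2p/(p-2),\widehat{\omega},2/(p-2)}$ with the lower norm estimate. The main technical obstacle will be the careful bookkeeping of exponents through the factorization/duality step and the weight transfer, together with the case split $q>1$ versus $q\leq 1$ in Lemma \ref{le2}.

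For (c), approximate $g$ by its dilates $g_r(z)=g(rz)$, $r\in(0,1)$. Since $g_r'$ is bounded on $\overline{\mathbb{D}}$, a routine argument shows $J_{g_r}:L_a^p(\omega)\to H^q$ is compact: it factors through the embedding of $L_a^p(\omega)$ into a space of analytic functions continuous up to the boundary, which is compact in $H^q$. Applying (a) to $g-g_r$ gives
\[
\|J_g - J_{g_r}\|_{L_a^p(\omega)\to H^q} \lesssim \|(g-g_r)'\|_{T^{pq/(p-q)}_{2p/(p-2),\widehat{\omega},2/(p-2)}},
\]
and the right-hand side tends to zero as $r\to 1^-$ by the dominated convergence theorem, since the outer exponent $pq/(p-q)$ is finite and $g'$ lies in the tent space by (b). Hence $J_g$ is a norm limit of compact operators and therefore compact.
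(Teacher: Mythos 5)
Your parts (a) and (b) follow essentially the same route as the paper: the same two-fold H\"older split $(|f|^2w^{2/p})(|g'|^2w^{-2/p})$ with exponents $p/2$, $p/(p-2)$ inside the cone and $p/q$, $p/(p-q)$ outside for the upper bound, and the same randomized reproducing-kernel test functions combined with tent-sequence duality and factorization for the lower bound. The exponent bookkeeping in (a) checks out.

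Part (c), however, has a genuine gap, and it is the one place where you depart from the paper. You claim that $J_{g_r}$ is compact because $g_r'$ is bounded and $J_{g_r}$ ``factors through the embedding of $L_a^p(\omega)$ into a space of analytic functions continuous up to the boundary.'' For a general $\omega\in\mathcal{D}$ this fails: functions in $L_a^p(\omega)$ satisfy only $|f(z)|\lesssim(\widehat{\omega}(z)(1-|z|))^{-1/p}$, and $\int_0^1(\widehat{\omega}(t)(1-t))^{-1/p}\,dt$ diverges as soon as $\widehat{\omega}(t)\asymp(1-t)^{\varrho}$ with $\varrho\geq p-1$ (e.g.\ $\omega=\omega_\alpha$ with $\alpha\geq p-1$), so $J_{g_r}f$ need not extend continuously to $\overline{\mathbb{D}}$ --- indeed $J_{g_r}$ need not even be bounded. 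This is not a technicality: boundedness of $g'$ does \emph{not} imply $g'\in T_{\frac{2p}{p-2},\widehat{\omega},\frac{2}{p-2}}^{\frac{pq}{p-q}}$ for such weights (already $g(z)=z$ gives an unbounded $J_g:L_a^p(\omega_\alpha)\to H^q$ when $\alpha\geq p-1$), so the approximating operators can fall outside the class you are working in. Moreover, your appeal to dominated convergence for $\|(g-g_r)'\|_{T_{\frac{2p}{p-2},\widehat{\omega},\frac{2}{p-2}}^{\frac{pq}{p-q}}}\to0$ supplies no dominating function: $(g-g_r)'(z)=g'(z)-rg'(rz)$ is not supported in a boundary tail, and controlling $\sup_r|g'(rz)|$ inside the tent norm would require a nontrivial maximal-function lemma you have not proved.

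The repair is the paper's Step 1: prove compactness \emph{directly} from the tent-space condition (available after your part (b)). For a bounded sequence $f_j\to0$ uniformly on compacta, split $\Gamma(\xi)$ at radius $r_0$; the inner part is controlled by uniform convergence, and the outer part is handled by exactly your H\"older computation from (a) applied to $\Gamma(\xi)\setminus r_0\mathbb{D}$, together with the fact that
\[
\int_{\mathbb{T}}\Bigl(\int_{\Gamma(\xi)\setminus r_0\mathbb{D}}|g'(z)|^{\frac{2p}{p-2}}\Bigl(\tfrac{(1-|z|^2)^2}{\widehat{\omega}(z)}\Bigr)^{\frac{2}{p-2}}dA(z)\Bigr)^{\frac{q(p-2)}{2(p-q)}}d\xi\longrightarrow0
\quad(r_0\to1^-),
\]
which \emph{is} a legitimate dominated/monotone convergence statement because here the truncation acts on the domain of integration of a finite integral. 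This avoids dilates altogether.
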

\begin{proof} We divide the proof into two steps.

\textbf{Step 1.} Let $g$ be an analytic function on $\mathbb{D}$ with satisfying
\[
g'\in T_{\frac{2p}{p-2},\widehat{\omega},\frac{2}{p-2}}^{\frac{pq}{p-q}},
\]
we will show that
$J_{g}:L_{a}^{p}(\omega)\to H^{q}$ is a compact operator. Let $\{f_{j}\}_{j=1}^{\infty}$ be any sequence in $L_{a}^{p}(\omega)$ with $\|f_{j}\|_{L_{a}^{p}(\omega)}\leq 1$ and $f_{j}$ converges to 0 uniformly on any compact subset of $\mathbb{D}$. We are devoted to proving
\begin{equation}\label{eq90}
\lim\limits_{j\to\infty}\|J_{g}(f_{j})\|_{H^{q}}=0.
\end{equation}
Since $g'\in T_{\frac{2p}{p-2},\widehat{\omega},\frac{2}{p-2}}^{\frac{pq}{p-q}}$, then
for any $\varepsilon>0$, there exists $r_{0}\in(0,1)$ such that
\begin{equation}\label{eq91}
\bigg[\int_{\mathbb{T}}
\bigg(\int_{\Gamma(\xi)\backslash r_{0}\mathbb{D}}|g'(z)|^{\frac{2p}{p-2}}\bigg(\frac{(1-|z|^{2})^{2}}{\widehat{\omega}(z)}\bigg)
^{\frac{2}{p-2}}dA(z)\bigg)^{\frac{q(p-2)}{2(p-q)}}d\xi\bigg]^{\frac{p-q}{p}}<\varepsilon.
\end{equation}
Moreover, there exists $k_{0}\in\mathbb{N}$ such that for any $j>k_{0}$, $|f_{j}|^{q}<\varepsilon$ on $r_{0}\mathbb{D}$.
Combining Lemma \ref{prop-reverse}, \eqref{eq2} with Fubini's theorem, for any $j\in\mathbb{N}$, we deduce
\begin{equation}\label{r-2}
\int_{\mathbb{T}}\int_{\Gamma(\xi)\backslash r_{0}\mathbb{D}}|f_{j}(z)|^{p}\frac{\widehat{\omega}(z)}{(1-|z|^{2})^{2}}dA(z)d\xi
\lesssim \int_{\mathbb{D}}|f_{j}(z)|^{p}\frac{\widehat{\omega}(z)}{(1-|z|^{2})^{2}}\int_{I(z)}d\xi dA(z)
\asymp\|f_{j}\|_{L_{a}^{p}(\omega)}^{p}.
\end{equation}
By Lemma \ref{le1}, \eqref{eq91}, \eqref{r-2} and $\textmd{H}\ddot{\textmd{o}}\textmd{lder's}$ inequality, we obtain
\begin{align*}\label{eq3}
\|J_{g}(f_{j})\|_{H^{q}}^{q}
&\lesssim\int_{\mathbb{T}}\bigg(\int_{\Gamma(\xi)\bigcap r_{0}\mathbb{D}}|g'(z)|^{2}|f_{j}(z)|^{2}dA(z)\bigg)^{\frac{q}{2}}d\xi
+\int_{\mathbb{T}}\bigg(\int_{\Gamma(\xi)\backslash r_{0}\mathbb{D}}|g'(z)|^{2}|f_{j}(z)|^{2}dA(z)\bigg)^{\frac{q}{2}}d\xi\nonumber\\
&
\lesssim\varepsilon+\int_{\mathbb{T}}
\bigg[\bigg(\int_{\Gamma(\xi)\backslash r_{0}\mathbb{D}}|g'(z)|^{\frac{2p}{p-2}}\bigg(\frac{(1-|z|^{2})^{2}}{\widehat{\omega}(z)}\bigg)
^{\frac{2}{p-2}}dA(z)\bigg)^{\frac{q(p-2)}{2p}}\nonumber\\
&\cdot
\bigg(\int_{\Gamma(\xi)\backslash r_{0}\mathbb{D}}|f_{j}(z)|^{p}\frac{\widehat{\omega}(z)}{(1-|z|^{2})^{2}}dA(z)\bigg)^{\frac{q}{p}}\bigg]d\xi\nonumber\\
&\leq\varepsilon+\bigg[\int_{\mathbb{T}}
\bigg(\int_{\Gamma(\xi)\backslash r_{0}\mathbb{D}}|g'(z)|^{\frac{2p}{p-2}}\bigg(\frac{(1-|z|^{2})^{2}}{\widehat{\omega}(z)}\bigg)
^{\frac{2}{p-2}}dA(z)\bigg)^{\frac{q(p-2)}{2(p-q)}}d\xi\bigg]^{\frac{p-q}{p}}\nonumber\\
&\cdot\bigg[\int_{\mathbb{T}}\bigg(\int_{\Gamma(\xi)\backslash r_{0}\mathbb{D}}|f_{j}(z)|^{p}\frac{\widehat{\omega}(z)}{(1-|z|^{2})^{2}}dA(z)\bigg)d\xi\bigg]^{\frac{q}{p}}\nonumber\\
&\lesssim\varepsilon+\varepsilon\|f_{j}\|_{L_{a}^{p}(\omega)}^{q}\lesssim\varepsilon,\quad\textmd{for~any}~j>k_{0}.
\end{align*}

\textbf{Step 2.} Assume that $J_{g}$ is bounded. For $r\in(0,1)$, let $\tilde{r}=\tanh r$ and $\{a_{k}\}_{k=1}^{\infty}$ be an $\tilde{r}$-lattice.
It follows from Lemma \ref{D-regular} that
\begin{equation}\label{eq81}
\widehat{\omega}(z)\asymp\widehat{\omega}(a_{k}),\quad z\in D(a_{k},\tilde{r}).
\end{equation}
By the subharmonicity of $|g'|^{2}$ and \eqref{eq81}, for any $\xi\in\mathbb{T}$, we get
\begin{align*}
\int_{\Gamma(\xi)}|g'(z)|^{\frac{2p}{p-2}}
\frac{(1-|z|^{2})^{\frac{4}{p-2}}}{\widehat{\omega}(z)^{\frac{2}{p-2}}}dA(z)
&\lesssim \int_{\Gamma(\xi)}\bigg(\int_{D(z,\tilde{r})}|g'(\zeta)|^{2}dA(\zeta)\bigg)^{\frac{p}{p-2}}
\frac{1}{\widehat{\omega}(z)^{\frac{2}{p-2}}(1-|z|^{2})^{2}}dA(z)\\
&\leq\sum_{k}\int_{D(a_{k},\tilde{r})}\Big(\int_{D(z,\tilde{r})}|g'(\zeta)|^{2}dA(\zeta)\Big)^{\frac{p}{p-2}}
\frac{1}{\widehat{\omega}(z)^{\frac{2}{p-2}}(1-|z|^{2})^{2}}dA(z)\\
&\lesssim \sum_k\Big(\sup_{\zeta\in D(a_{k},2\tilde{r})}|g'(\zeta)|^{\frac{2p}{p-2}}\Big)
\frac{(1-|a_{k}|^{2})^{\frac{2p}{p-2}}}{\widehat{\omega}(a_{k})^{\frac{2}{p-2}}},
\end{align*}
where the summation runs $\{k:D(a_{k},\tilde{r})\bigcap\Gamma(\xi)\neq\emptyset\}$.
Let $\delta=3e^{2r}-2$. By Lemma \ref{o-g},
for any $\xi\in\mathbb{T}$, we have
\[
D(\zeta,\tilde{r})\subseteq \Gamma_{\delta}(\xi),\quad \forall\zeta\in \Gamma(\xi).
\]
Hence, for any $\xi\in\mathbb{T}$, we deduce
\begin{align*}
\int_{\Gamma(\xi)}|g'(z)|^{\frac{2p}{p-2}}
\frac{(1-|z|^{2})^{\frac{4}{p-2}}}{\widehat{\omega}(z)^{\frac{2}{p-2}}}dA(z)
\lesssim \sum_{a_{k}\in\Gamma_{\delta}(\xi)}
\bigg(\sup_{\zeta\in D(a_{k},2\tilde{r})}|g'(\zeta)|^{\frac{2p}{p-2}}\bigg)
\frac{(1-|a_{k}|^{2})^{\frac{2p}{p-2}}}{\widehat{\omega}(a_{k})^{\frac{2}{p-2}}}.
\end{align*}
Thus, we obtain
\begin{eqnarray*}\label{b-t-eq19}\begin{split}
&~~~~
\int_{\mathbb{T}}\bigg(\int_{\Gamma(\xi)}|g'(z)|^{\frac{2p}{p-2}}
\frac{(1-|z|^{2})^{\frac{4}{p-2}}}{\widehat{\omega}(z)^{\frac{2}{p-2}}}dA(z)\bigg)^{\frac{q(p-2)}{2(p-q)}}d\xi\\&
\lesssim\int_{\mathbb{T}}\bigg(\sum_{a_{k}\in\Gamma(\xi)}
\bigg(\sup_{\zeta\in D(a_{k},2\tilde{r})}|g'(\zeta)|^{\frac{2p}{p-2}}\bigg)
\frac{(1-|a_{k}|^{2})^{\frac{2p}{p-2}}}{\widehat{\omega}(a_{k})^{\frac{2}{p-2}}}\bigg)^{\frac{q(p-2)}{2(p-q)}}d\xi.
\end{split}\end{eqnarray*}
In what follows, we only need to prove
\begin{equation}\label{eq4}
\int_{\mathbb{T}}\bigg(\sum\limits_{a_{k}\in\Gamma(\xi)}
\bigg(\sup_{\zeta\in D(a_{k},2\tilde{r})}|g'(\zeta)|^{\frac{2p}{p-2}}\bigg)\frac{(1-|a_{k}|^{2})^{\frac{2p}{p-2}}}
{\widehat{\omega}(a_{k})^{\frac{2}{p-2}}}\bigg)^{\frac{q(p-2)}{2(p-q)}}d\xi\lesssim\|J_{g}\|_{L_{a}^{p}(\omega)\to H^{q}}^{\frac{pq}{p-q}}.
\end{equation}
Define a sequence $\{b_{k}\}_{k=1}^{\infty}$ by
\[
b_{k}=\bigg(\sup_{\zeta\in D(a_{k},2\tilde{r})}|g'(\zeta)|^{q}\bigg)\frac{(1-|a_{k}|^{2})^{q}}{\widehat{\omega}(a_{k})^{\frac{q}{p}}}.
\]
Let
$\eta=\frac{2p}{q(p-2)}$. We turn to prove that
there exists a positive constant $s$ such that
\begin{equation}\label{eq14}
\|\{b_{k}^{\frac{1}{s}}\}\|_{T_{\eta s}^{\frac{ps}{p-q}}(\{a_{k}\})}\lesssim\|J_{g}\|_{L_{a}^{p}(\omega)\to H^{q}}^{\frac{q}{s}}.
\end{equation}
Choose a constant $s$ with $s>\max\{1,\frac{q}{2}\}$. From Lemma \ref{le2}, the dual space of
$T_{(\eta s)'}^{\big(\frac{ps}{p-q}\big)'}(\{a_{k}\})$ can be identified with $T_{\eta s}^{\frac{ps}{p-q}}(\{a_{k}\})$.
Thus, by \eqref{eq2}, for any sequence
$\{e_{k}\}\in T_{(\eta s)'}^{\big(\frac{ps}{p-q}\big)'}(\{a_{k}\})$, it holds that
\begin{equation}\label{eq60}
|\langle e_{k},b_{k}^{\frac{1}{s}}\rangle_{T_{2}^{2}(\{a_{k}\})}|\leq\sum\limits_{k=1}^{\infty}|e_{k}b_{k}^{\frac{1}{s}}|(1-|a_{k}|^{2})
\asymp \sum\limits_{k=1}^{\infty}|e_{k}b_{k}^{\frac{1}{s}}|\int_{I(a_{k})}d\xi
=\int_{\mathbb{T}}\sum\limits_{a_{k}\in\Gamma(\xi)}|e_{k}b_{k}^{\frac{1}{s}}|d\xi.
\end{equation}
By Lemma \ref{le4}, we obtain
\begin{equation}\label{eq12}
T_{(\eta s)'}^{\big(\frac{ps}{p-q}\big)'}(\{a_{k}\})=T_{\frac{2s}{2s-q}}^{s'}(\{a_{k}\})\cdot T_{\frac{ps}{q}}^{\frac{ps}{q}}(\{a_{k}\}),
\end{equation}
from which there exist two sequences $\{x_{k}\}\in T_{\frac{2s}{2s-q}}^{s'}(\{a_{k}\})$ and
$\{y_{k}\}\in T_{p}^{p}(\{a_{k}\})$ such that
\[
e_{k}=x_{k}y_{k}^{\frac{q}{s}}.
\]
Together with \eqref{eq60} and $\textmd{H}\ddot{\textmd{o}}\textmd{lder's}$ inequality, we get
\begin{align}\label{eq13}
|\langle e_{k},b_{k}^{\frac{1}{s}}\rangle_{T_{2}^{2}(\{a_{k}\})}|
&\lesssim\int_{\mathbb{T}}\bigg(\sum\limits_{a_{k}\in\Gamma(\xi)}|x_{k}|^{\frac{2s}{2s-q}}\bigg)^{\frac{2s-q}{2s}}
\bigg(\sum\limits_{a_{k}\in\Gamma(\xi)}|y_{k}^{2}b_{k}^{\frac{2}{q}}|\bigg)^{\frac{q}{2s}}d\xi\nonumber\\
&\leq \|\{x_{k}\}\|_{T_{\frac{2s}{2s-q}}^{s'}(\{a_{k}\})}
\bigg[\int_{\mathbb{T}}\bigg(\sum\limits_{a_{k}\in\Gamma(\xi)}|y_{k}^{2}b_{k}^{\frac{2}{q}}|\bigg)^{\frac{q}{2}}d\xi\bigg]^{\frac{1}{s}}.
\end{align}
Let
\begin{equation}\label{eq77}
f_{k}(z)=\frac{K_{a_{k}}^{\omega}(z)}{\|K_{a_{k}}^{\omega}\|_{L_{a}^{2-\frac{1}{p}}(\omega)}^{2-\frac{1}{p}}},\quad z\in\mathbb{D}
\end{equation}
and $\{r_{k}:k\in\mathbb{N}\}$ be the set of the Rademacher functions.
For $t\in[0,1]$, set
\[
F_{t}(z)=\sum_{k=1}^{\infty}(1-|a_{k}|^{2})^{\frac{1}{p}}y_{k}r_{k}(t)f_{k}(z),\quad z\in\mathbb{D}.
\]
Then
\begin{equation}\label{eq78}
\|\{y_{k}\}\|_{T_{p}^{p}(\{a_{k}\})}^{p}
=\int_{\mathbb{T}}\sum_{a_{k}\in\Gamma(\xi)}|y_{k}|^{p}d\xi
=\sum\limits_{k=1}^{\infty}\int_{I(a_{k})}|y_{k}|^{p}d\xi
\asymp\sum_{k=1}^{\infty}|y_{k}|^{p}(1-|a_{k}|^{2}).
\end{equation}
By using Proposition \ref{le5} and \eqref{eq78}, we obtain
\begin{equation*}\label{eq7}
\|J_{g}(F_{t})\|_{H^{q}}^{q}
\lesssim\|J_{g}\|_{L_{a}^{p}(\omega)\to H^{q}}^{q}\big(\sum_{k=1}^{\infty}|y_{k}|^{p}(1-|a_{k}|^{2})\big)^{\frac{q}{p}}
\asymp\|J_{g}\|_{L_{a}^{p}(\omega)\to H^{q}}^{q}\|\{y_{k}\}\|_{T_{p}^{p}(\{a_{k}\})}^{q}.
\end{equation*}
Together with Lemma \ref{le1}, we deduce
\begin{equation}\label{eq18}
\int_{\mathbb{T}}\bigg(\int_{\Gamma(\xi)}|g'(z)|^{2}
\bigg|\sum_{k=1}^{\infty}(1-|a_{k}|^{2})^{\frac{1}{p}}y_{k}r_{k}(t)f_{k}(z)\bigg|^{2}dA(z)\bigg)^{\frac{q}{2}}d\xi
\lesssim\|J_{g}\|_{L_{a}^{p}(\omega)\to H^{q}}^{q}\cdot\|\{y_{k}\}\|_{T_{p}^{p}(\{a_{k}\})}^{q}.
\end{equation}
Integrate both sides of the inequality \eqref{eq18} from 0 to 1 with respect to $t$, by Fubini's theorem, we have
\begin{equation}\label{eq8}
\int_{\mathbb{T}}\int_{0}^{1}\bigg(\int_{\Gamma(\xi)}|g'(z)|^{2}
\bigg|\sum_{k=1}^{\infty}(1-|a_{k}|^{2})^{\frac{1}{p}}y_{k}r_{k}(t)f_{k}(z)\bigg|^{2}dA(z)\bigg)^{\frac{q}{2}}dtd\xi\lesssim
 \|J_{g}\|_{L_{a}^{p}(\omega)\to H^{q}}^{q}\cdot\|\{y_{k}\}\|_{T_{p}^{p}(\{a_{k}\})}^{q}.
\end{equation}
By the subharmonicity of $|g'^{2}|$ and Lemma \ref{D-regular}, for any $k\in\mathbb{N}$, we obtain
\begin{align}\label{eq58}
\sup\limits_{\zeta\in D(a_{k},2\tilde{r})}|g'(\zeta)|^{2}
&\lesssim\sup\limits_{\zeta\in D(a_{k},2\tilde{r})}
\frac{1}{(1-|\zeta|)^{2}}\int_{B(\zeta,\tilde{r}(1-|\zeta|))}|g'(z)|^{2}dA(z)\nonumber\\
&\leq\sup\limits_{\zeta\in D(a_{k},2\tilde{r})}
\frac{1}{(1-|\zeta|)^{2}}\int_{D(\zeta,\tilde{r})}|g'(z)|^{2}dA(z)\nonumber\\
&\leq\frac{1}{\widehat{\omega}(a_{k})(1-|a_{k}|)}\int_{D(a_{k},3\tilde{r})}|g'(z)|^{2}\frac{\widehat{\omega}(z)}{(1-|z|)}dA(z).
\end{align}
For $z\in D(a_{k},3\tilde{r})$, it follows from \cite[Lemma 8]{PRS} and \eqref{reproducing-formula} that for $r$ small enough, we have
\begin{equation}\label{eq59}
|f_{k}(z)|\asymp \big[\widehat{\omega}(a_{k})(1-|a_{k}|)\big]^{1-\frac{1}{p}}|K_{a_{k}}^{\omega}(z)|\asymp \frac{1}{\widehat{\omega}(a_{k})^{\frac{1}{p}}(1-|a_{k}|^{2})^{\frac{1}{p}}}.
\end{equation}
Take $\iota>\max\{1,\frac{2}{q}\}$. Combining Lemma \ref{D-regular}, \eqref{eq58} with \eqref{eq59}, for $\xi\in\mathbb{T}$, we deduce
\begin{align*}\label{r-5}
\sum\limits_{a_{k}\in\Gamma(\xi)}|y_{k}^{2}b_{k}^{\frac{2}{q}}|
&\asymp\sum\limits_{a_{k}\in\Gamma(\xi)}|y_{k}|^{2}
\sup\limits_{\zeta\in D(a_{k},2\tilde{r})}|g'(\zeta)|^{2}\bigg(\frac{1-|a_{k}|^{2}}{|1-\bar{a_{k}}\xi|}\bigg)^{\iota}\frac{(1-|a_{k}|^{2})^{2}}
{\widehat{\omega}(a_{k})^{\frac{2}{p}}}\nonumber\\
&\lesssim\sum\limits_{k=1}^{\infty}|y_{k}|^{2}
\frac{1}{\widehat{\omega}(a_{k})(1-|a_{k}|)}\int_{D(a_{k},3\tilde{r})}|g'(z)|^{2}\frac{\widehat{\omega}(z)}{1-|z|}
\bigg(\frac{1-|z|^{2}}{|1-\bar{z}\xi|}\bigg)^{\iota}
\frac{(1-|z|^{2})^{2}}{\widehat{\omega}(z)^{\frac{2}{p}}}dA(z)\nonumber\\
&\asymp\sum\limits_{k=1}^{\infty}|y_{k}|^{2}(1-|a_{k}|^{2})^{\frac{2}{p}}
\int_{D(a_{k},3\tilde{r})}|g'(z)|^{2}\bigg(\frac{1-|z|^{2}}{|1-\bar{z}\xi|}\bigg)^{\iota}|f_{k}(z)|^{2}dA(z)\nonumber\\
&\leq\int_{\mathbb{D}}\bigg(\frac{1-|z|^{2}}{|1-\bar{z}\xi|}\bigg)^{\iota}
\sum\limits_{k=1}^{\infty}|y_{k}|^{2}(1-|a_{k}|^{2})^{\frac{2}{p}}|f_{k}(z)|^{2}|g'(z)|^{2}dA(z).
\end{align*}
Together with Lemma \ref{le3}, we know
\begin{align*}
\int_{\mathbb{T}}\bigg(\sum\limits_{a_{k}\in\Gamma(\xi)}|y_{k}^{2}b_{k}^{\frac{2}{q}}|\bigg)^{\frac{q}{2}}d\xi
&\lesssim\int_{\mathbb{T}}\bigg(\int_{\mathbb{D}}\bigg(\frac{1-|z|^{2}}{|1-\bar{z}\xi|}\bigg)^{\iota}
\sum\limits_{k=1}^{\infty}|y_{k}|^{2}(1-|a_{k}|^{2})^{\frac{2}{p}}|f_{k}(z)|^{2}|g'(z)|^{2}dA(z)\bigg)^{\frac{q}{2}}d\xi\nonumber\\
&\asymp\int_{\mathbb{T}}\bigg(\int_{\Gamma(\xi)}\sum\limits_{k=1}^{\infty}|y_{k}|^{2}(1-|a_{k}|^{2})
^{\frac{2}{p}}|f_{k}(z)|^{2}|g'(z)|^{2}dA(z)\bigg)^{\frac{q}{2}}d\xi.
\end{align*}
Moreover, using Lemma \ref{le-Khinchine}, Lemma \ref{le-Kahane} and Fubini's theorem, we deduce
\begin{eqnarray*}\label{eq9}\begin{split}
&~~~~\int_{\mathbb{T}}\bigg(\int_{\Gamma(\xi)}\sum\limits_{k=1}^{\infty}|y_{k}|^{2}(1-|a_{k}|^{2})
^{\frac{2}{p}}|f_{k}(z)|^{2}|g'(z)|^{2}dA(z)\bigg)^{\frac{q}{2}}d\xi\\&
\asymp\int_{\mathbb{T}}\bigg(\int_{\Gamma(\xi)}|g'(z)|^{2}\int_{0}^{1}\bigg|\sum\limits_{k=1}^{\infty}y_{k}r_{k}(t)
(1-|a_{k}|^{2})^{\frac{1}{p}}f_{k}(z)\bigg|^{2}dtdA(z)\bigg)^{\frac{q}{2}}d\xi\\&
=\int_{\mathbb{T}}\bigg(\int_{0}^{1}\int_{\Gamma(\xi)}\bigg|g'(z)\sum\limits_{k=1}^{\infty}y_{k}r_{k}(t)
(1-|a_{k}|^{2})^{\frac{1}{p}}f_{k}(z)\bigg|^{2}dA(z)dt\bigg)^{\frac{q}{2}}d\xi\\&
\asymp\int_{\mathbb{T}}\int_{0}^{1}\bigg(\int_{\Gamma(\xi)}\bigg|g'(z)\sum\limits_{k=1}^{\infty}y_{k}r_{k}(t)
(1-|a_{k}|^{2})^{\frac{1}{p}}f_{k}(z)\bigg|^{2}dA(z)\bigg)^{\frac{q}{2}}dtd\xi.
\end{split}\end{eqnarray*}
Combining with \eqref{eq8}, we conclude that
\begin{equation}\label{eq11}
\int_{\mathbb{T}}\bigg(\sum\limits_{a_{k}\in\Gamma(\xi)}|y_{k}^{2}b_{k}^{\frac{2}{q}}|\bigg)^{\frac{q}{2}}d\xi
\lesssim\|J_{g}\|_{L_{a}^{p}(\omega)\to H^{q}}^{q}\cdot\|\{y_{k}\}\|_{T_{p}^{p}(\{a_{k}\})}^{q}.
\end{equation}
According to \eqref{eq13} and \eqref{eq11}, we deduce
\begin{equation}\label{eq61}
|\langle e_{k},b_{k}^{\frac{1}{s}}\rangle_{T_{2}^{2}(\{a_{k}\})}|
\lesssim\|\{x_{k}\}\|_{T_{\frac{2s}{2s-q}}^{s'}(\{a_{k}\})}\cdot
\|J_{g}\|_{L_{a}^{p}(\omega)\to H^{q}}^{\frac{q}{s}}\cdot\|\{y_{k}\}\|_{T_{p}^{p}(\{a_{k}\})}^{\frac{q}{s}}.
\end{equation}
Take infimum over all possible factorizations of $\{e_{k}\}$ as in \eqref{eq12}, we deduce
\[
\inf\|\{x_{k}\}\|_{T_{\frac{2s}{2s-q}}^{s'}(\{a_{k}\})}\cdot\|\{y_{k}\}\|_{T_{p}^{p}(\{a_{k}\})}^{\frac{q}{s}}
\lesssim \|\{e_{k}\}\|_{T_{(\eta s)'}^{\big(\frac{ps}{p-q}\big)'}(\{a_{k}\})}.
\]
Combining with \eqref{eq61}, we get
\[
|\langle e_{k},b_{k}^{\frac{1}{s}}\rangle_{T_{2}^{2}(\{a_{k}\})}|
\lesssim\|J_{g}\|_{L_{a}^{p}(\omega)\to H^{q}}^{\frac{q}{s}}\cdot\|\{e_{k}\}\|_{T_{(\eta s)'}^{\big(\frac{ps}{p-q}\big)'}(\{a_{k}\})}.
\]
Finally, it follows from Lemma \ref{le2} that \eqref{eq14} holds. Thus, we get the result as desired.
\end{proof}
\subsection{Case D}
Last, we will deal with the remaining case $0<q<p\leq2$, with the main result as follows.
\begin{proposition} \label{t4} Let $0<q<p\leq2$, $\omega\in\mathcal{D}$ and $g\in H(\mathbb{D})$.
Then the Volterra type
integration operator $J_{g}:L_{a}^{p}(\omega)\to H^{q}$ is compact if and only if
\[
\lim\limits_{r\to1^{-}}\int_{\mathbb{T}}
\sup_{z\in\Gamma(\xi)\backslash r\mathbb{D}}
|g'(z)|^{\frac{pq}{p-q}}\frac{(1-|z|^{2})^{\frac{pq}{p-q}}}{\widehat{\omega}(z)^{\frac{q}{p-q}}}d\xi=0.
\]
\end{proposition}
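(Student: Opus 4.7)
The plan is to treat sufficiency and necessity separately, following the templates of Cases A--C.

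\textbf{Sufficiency.} Let $\{f_j\}\subset L_a^p(\omega)$ be bounded with $f_j\to 0$ uniformly on compact subsets; I must show $\|J_g(f_j)\|_{H^q}\to 0$. Lemma \ref{le1} recasts this as controlling
\[
\|J_g(f_j)\|_{H^q}^{q}\asymp\int_{\mathbb{T}}\Bigl(\int_{\Gamma(\xi)}|f_j(z)g'(z)|^{2}\,dA(z)\Bigr)^{q/2}d\xi.
\]
Given $\varepsilon>0$, I pick $r_0\in(0,1)$ such that the assumed tail is $<\varepsilon$ and $k_0$ such that $|f_j|<\varepsilon$ on $r_0\mathbb{D}$ for $j>k_0$. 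The part of the area integral over $\Gamma(\xi)\cap r_0\mathbb{D}$ is controlled by uniform convergence of $f_j$ on $r_0\mathbb{D}$ together with finiteness of $\int_{r_0\mathbb{D}}|g'|^{2}dA$. For the tail I factor $|f_j g'|^{2}=\bigl[|g'|^{2}(1-|z|^{2})^{2}/\widehat{\omega}(z)^{2/p}\bigr]\bigl[|f_j|^{2}\widehat{\omega}(z)^{2/p}/(1-|z|^{2})^{2}\bigr]$, pull out $G_{r_0}(\xi)^{2}:=\sup_{z\in\Gamma(\xi)\setminus r_0\mathbb{D}}|g'(z)|^{2}(1-|z|^{2})^{2}/\widehat{\omega}(z)^{2/p}$, raise to the $q/2$-th power, integrate over $\mathbb{T}$, and apply H\"older with conjugate exponents $p/q$ and $p/(p-q)$. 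This separates $\|G_{r_0}\|_{L^{pq/(p-q)}(\mathbb{T})}^{q}<\varepsilon^{q}$ from a tent-type norm of $f_j$.

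The critical ingredient, and the main obstacle on the sufficiency side, is the $\mathcal{D}$-weighted non-tangential embedding
\[
\int_{\mathbb{T}}\Bigl(\int_{\Gamma(\xi)}|f(z)|^{2}\,\frac{\widehat{\omega}(z)^{2/p}}{(1-|z|^{2})^{2}}\,dA(z)\Bigr)^{p/2}d\xi\lesssim\|f\|_{L_a^p(\omega)}^{p},\qquad 0<p\leq 2.
\]
For $p=2$ this is immediate from Fubini, Lemma \ref{prop-reverse}, and $|I(z)|\asymp 1-|z|$; for $p<2$ I plan to derive it by applying subharmonicity of $|f|^{p}$ on pseudohyperbolic disks $D(z,\tilde r)$, invoking Lemma \ref{D-regular} to pull $\widehat{\omega}(z)^{2/p}$ out of the $D(z,\tilde r)$-average, and reassembling via an $\tilde r$-lattice covering into $\int_{\mathbb{D}}|f|^{p}\widehat{\omega}(z)/(1-|z|)\,dA(z)\asymp\|f\|_{L_a^p(\omega)}^{p}$ (Lemma \ref{prop-reverse}). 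With the embedding in hand, the tent-type factor is $\lesssim\|f_j\|_{L_a^p(\omega)}^{q}\lesssim 1$, so the whole expression is $\lesssim\varepsilon^{q}$ and sufficiency follows.

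\textbf{Necessity.} Suppose $J_g$ is compact. I adapt the Rademacher-sum argument of Proposition \ref{t-c-3}. Fix an $\tilde r$-lattice $\{a_k\}$ and the normalized reproducing kernels $f_k=K_{a_k}^{\omega}/\|K_{a_k}^{\omega}\|_{L_a^{2-1/p}(\omega)}^{2-1/p}$ from Proposition \ref{le5}; for sequences $\{y_k\}\in l^{p}$ supported on $\{k:|a_k|>r\}$ and $t\in[0,1]$, form $F_t(z)=\sum_{k}(1-|a_k|^{2})^{1/p}y_k r_k(t)f_k(z)$. Proposition \ref{le5} gives $\|F_t\|_{L_a^p(\omega)}\lesssim\|\{y_k\}\|_{l^p}$ uniformly in $t$, and the support condition forces $F_t\to 0$ uniformly on compacta as $r\to 1^-$. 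Compactness of $J_g$ therefore yields $\|J_g F_t\|_{H^q}\to 0$ as $r\to 1^-$ uniformly in $\|\{y_k\}\|_{l^p}\leq 1$ and $t\in[0,1]$. Chaining Lemma \ref{le1}, the Khinchine and Kahane inequalities (Lemmas \ref{le-Khinchine}, \ref{le-Kahane}), Fubini, Lemma \ref{le3}, and the pointwise estimate \eqref{eq59} for $f_k$ as in the derivation of \eqref{eq11}--\eqref{eq14} transfers this smallness onto a discrete tent-space quantity of the form $\|\{b_k\chi_{\{|a_k|>r\}}\}\|_{T_{\infty}^{pq/(p-q)}(\{a_k\})}$, where $b_k=\sup_{\zeta\in D(a_k,2\tilde r)}|g'(\zeta)|(1-|a_k|^{2})/\widehat{\omega}(a_k)^{1/p}$. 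Returning from the lattice back to the continuous non-tangential sup via Lemma \ref{D-regular} and subharmonicity of $|g'|^{2}$, exactly as in the estimate leading to \eqref{eq4}, then produces the stated continuous tail condition. The main obstacle here is the bookkeeping required to ensure that the uniformity in $\{y_k\}$ and $t$ survives the Rademacher/atomic chain and translates into a \emph{vanishing}, not merely finite, tent-space norm on the support $\{|a_k|>r\}$.
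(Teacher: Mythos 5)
Your sufficiency argument is correct and is essentially the paper's proof: the same splitting of $\Gamma(\xi)$ at $r_0\mathbb{D}$, the same factorization and H\"older step with exponents $p/(p-q)$ and $p/q$, and the same key embedding $\int_{\mathbb{T}}\bigl(\int_{\Gamma(\xi)}|f|^{2}\widehat{\omega}(z)^{2/p}(1-|z|^{2})^{-2}dA(z)\bigr)^{p/2}d\xi\lesssim\|f\|_{L_a^p(\omega)}^{p}$, proved exactly as you propose (subharmonicity of $|f|^{p}$ on pseudohyperbolic disks, Lemma \ref{D-regular}, a lattice covering of $\Gamma(\xi)$ inside a larger cone $\Gamma_\beta(\xi)$, the inequality $|\sum z_k|\le(\sum|z_k|^{p})^{1/p}$ for the exponent $p/2\le1$, and Fubini). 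The necessity argument also follows the paper's two-step strategy (discretize the non-tangential sup onto a lattice, then kill the discrete tail with Rademacher sums of kernel-type test functions and the $T_\iota$--$T_\infty$ duality), and your remarks about uniformity in $t$ and $\{y_k\}$ are the right things to watch.

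However, there is a genuine gap in the necessity half: your test functions $f_k=K_{a_k}^{\omega}/\|K_{a_k}^{\omega}\|_{L_a^{2-1/p}(\omega)}^{2-1/p}$ and the norm bound from Proposition \ref{le5} are only available for $1<p<\infty$, while the proposition covers all of $0<q<p\le 2$, in particular $0<p\le 1$. For $p\le 1$ the exponent $2-1/p\le 1$, Lemma \ref{reproducing} and hence \eqref{reproducing-formula} no longer give the asymptotics $\|K_{a_k}^{\omega}\|_{L_a^{2-1/p}(\omega)}^{2-1/p}\asymp[\widehat{\omega}(a_k)(1-|a_k|)]^{-(1-1/p)}$ (already at $p=1$ one picks up a logarithm), and Proposition \ref{le5} simply does not apply, so neither the bound $\|F_t\|_{L_a^p(\omega)}\lesssim\|\{y_k\}\|_{l^p}$ nor the pointwise estimate \eqref{eq59} is justified. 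The paper circumvents this by switching test functions in that range: for $0<p\le1$ it uses $f_{k,p}=K_{a_k}^{\omega}(\cdot)^{1/q}/\|K_{a_k}^{\omega}\|_{L_a^{p/q}(\omega)}^{1/q}$, exploiting that $p/q>1$ so that Lemma \ref{reproducing} applies to the normalizing constant, and replaces Proposition \ref{le5} by the $p$-triangle inequality \eqref{h-p} to get $\|F_t\|_{L_a^p(\omega)}\lesssim1$ directly. You would need this (or an equivalent) modification for $0<p\le1$; for $1<p\le2$ your outline goes through as written.
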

\begin{proof}
Sufficiency. Let $\{f_{j}\}_{j=1}^{\infty}$ be any sequence in $L_{a}^{p}(\omega)$ with $\|f_{j}\|_{L_{a}^{p}(\omega)}\leq 1$ and $f_{j}$ converges to 0 uniformly on any compact subset of $\mathbb{D}$. Then we will prove $\|J_{g}(f_{j})\|_{H^{q}}\to 0$ as $j\to\infty$.
By the assumption, it follows that for any $\varepsilon>0$, there exists $R_{0}\in(0,1)$ such that for any $R_{1}\in[R_{0},1)$, it holds that
\begin{equation} \label{b-t-eq1}
\int_{\mathbb{T}}\sup_{z\in\Gamma(\xi)\backslash R_{1}\mathbb{D}}
|g'(z)|^{\frac{pq}{p-q}}\frac{(1-|z|^{2})^{\frac{pq}{p-q}}}{\widehat{\omega}(z)^{\frac{q}{p-q}}}d\xi
<\varepsilon^{\frac{p}{p-q}}.
\end{equation}
Note that there also exists $k_{0}\in\mathbb{N}$ such that
\begin{equation} \label{b-t-eq2}
|f_{j}|<\varepsilon^{\frac{1}{q}}~\textmd{on}~R_{0}\mathbb{D},\quad \textmd{for~any}~j>k_{0}.
\end{equation}
Choose $\gamma\in(0,\infty)$ with $\tilde{\gamma}:=\tanh \gamma\in(0,\frac{1}{2})$ and $\{a_{k}\}_{k=1}^{\infty}$ be a $\tilde{\gamma}$-lattice. Then we know
\[
\Gamma(\xi)\subseteq\bigcup_{\{a_{k}:D(a_{k},\tilde{\gamma})\bigcap\Gamma(\xi)\neq\emptyset\}}D(a_{k},\tilde{\gamma}).
\]
Take $\alpha=3e^{2\gamma}-2$.
By Lemma \ref{o-g}, we get
\[
D(\zeta,\tilde{\gamma})\subseteq\Gamma_{\alpha}(\xi),\quad\zeta\in\Gamma(\xi).
\]
Hence, we obtain
\begin{equation} \label{eq23}
\Gamma(\xi)\subseteq \bigcup_{a_{k}\in\Gamma_{\alpha}(\xi)}D(a_{k},\tilde{\gamma}).
\end{equation}
Observe (\cite{Pau-2020}) that there exists a constant $\beta>\alpha$
such that
\begin{equation} \label{eq67}
\bigcup_{a_{k}\in\Gamma_{\alpha}(\xi)}D(a_{k},\tilde{\gamma})\subseteq \Gamma_{\beta}(\xi),\quad \xi\in\mathbb{T}.
\end{equation}
For any $j\in\mathbb{N}$, by the subharmonicity of $|f_{j}|^{2}$, we deduce
\begin{align}\label{eq68}
|f_{j}(z)|^{2}
&\lesssim\bigg(\frac{1}{(1-|a_{k}|)^{2}}\int_{D(a_{k},2\tilde{\gamma})}|f_{j}(u)|^{p}dA(u)\bigg)^{\frac{2}{p}}, \quad z\in D(a_{k},\tilde{\gamma}).
\end{align}
It follows from Lemma \ref{D-regular} that
\begin{equation} \label{p-a-1}
\widehat{\omega}(z)\asymp\widehat{\omega}(a_{k}),\quad z\in D(a_{k},2\tilde{\gamma}).
\end{equation}
According to \cite[Theorem 3.3]{zhu2007}, if $p\in(0,1]$, then for any sequence $\{z_{k}\}_{k=1}^{\infty}\in l^{p}$,
\begin{equation}\label{h-p}
\bigg|\sum_{k=1}^{\infty}z_{k}\bigg|\leq\bigg(\sum_{k=1}^{\infty}|z_{k}|^{p}\bigg)^{\frac{1}{p}}.
\end{equation}
Combining \eqref{eq23}, \eqref{eq67}, \eqref{eq68}, \eqref{p-a-1} with \eqref{h-p}, for any $j\in\mathbb{N}$, we obtain
\begin{equation}\label{r-3}
\bigg(\int_{\Gamma(\xi)}|f_{j}(z)|^{2}(1-|z|^{2})^{-2}\widehat{\omega}(z)^{\frac{2}{p}}dA(z)\bigg)^{\frac{p}{2}}
\lesssim\int_{\Gamma_{\beta}(\xi)}|f_{j}(u)|^{p}\frac{\widehat{\omega}(u)}{(1-|u|)^{2}}dA(u).
\end{equation}
By Lemma \ref{prop-reverse}, \eqref{eq2} and Fubini's theorem, for any $j\in\mathbb{N}$, we have
\begin{equation*}\label{r-4}
\int_{\mathbb{T}}\int_{\Gamma_{\beta}(\xi)}|f_{j}(u)|^{p}\frac{\widehat{\omega}(u)}{(1-|u|)^{2}}dA(u)d\xi
=\int_{\mathbb{D}}|f_{j}(u)|^{p}\frac{\widehat{\omega}(u)}{(1-|u|)^{2}}\int_{I_{\beta}(u)}d\xi dA(u)
\asymp\|f_{j}\|_{L_{a}^{p}(\omega)}^{p}.
\end{equation*}
Together with Lemma \ref{le1}, \eqref{b-t-eq1}, \eqref{b-t-eq2}, \eqref{r-3} and $\textmd{H}\ddot{\textmd{o}}\textmd{lder's}$ inequality, for any $j>k_{0}$, we get
\begin{align*}\label{eq21}
\|J_{g}(f_{j})\|_{H^{q}}^{q}
&\lesssim\int_{\mathbb{T}}\bigg(\int_{\Gamma(\xi)\bigcap R_{0}\mathbb{D}}|g'(z)|^{2}|f_{j}(z)|^{2}dA(z)\bigg)^{\frac{q}{2}}d\xi
+\int_{\mathbb{T}}\bigg(\int_{\Gamma(\xi)\backslash R_{0}\mathbb{D}}|g'(z)|^{2}|f_{j}(z)|^{2}dA(z)\bigg)^{\frac{q}{2}}d\xi\nonumber\\
&\lesssim\varepsilon+\int_{\mathbb{T}}
\bigg(\sup\limits_{z\in\Gamma(\xi)\backslash R_{0}\mathbb{D}}|g'(z)|^{q}\frac{(1-|z|^{2})^{q}}{\widehat{\omega}(z)^{\frac{q}{p}}}\bigg)\cdot
\bigg(\int_{\Gamma(\xi)}|f_{j}(z)|^{2}\frac{\widehat{\omega}(z)^{\frac{2}{p}}}{(1-|z|^{2})^{2}}dA(z)\bigg)^{\frac{q}{2}}d\xi\nonumber\\
&\leq \varepsilon+
\bigg[\int_{\mathbb{T}}\bigg(\sup_{z\in\Gamma(\xi)\backslash R_{0}\mathbb{D}}
|g'(z)|^{q}\frac{(1-|z|^{2})^{q}}{\widehat{\omega}(z)^{\frac{q}{p}}}\bigg)^{\frac{p}{p-q}}d\xi\bigg]^{\frac{p-q}{p}}\nonumber\\
&\cdot\bigg[\int_{\mathbb{T}}\bigg(\int_{\Gamma(\xi)}|f_{j}(z)|^{2}\frac{\widehat{\omega}(z)^{\frac{2}{p}}}{(1-|z|^{2})^{2}}dA(z)\bigg)
^{\frac{p}{2}}d\xi\bigg]^{\frac{q}{p}}\nonumber\\
&\lesssim\varepsilon+\varepsilon\|f_{j}\|_{L_{a}^{p}(\omega)}^{q}\lesssim\varepsilon,
\end{align*}
from which it follows that $J_{g}:L_{a}^{p}(\omega)\to H^{q}$ is compact.
\vskip 0.1in
Necessity. We divide the proof into two steps.
\vskip 0.1in

\textbf{Step 1.} Let $\delta$ be a positive constant such that
$\tilde{\delta}:=\tanh \delta\in(0,\frac{1}{3})$.
Take $\{a_{k}\}_{k=1}^{\infty}$ be a $\tilde{\delta}$-lattice. Rearrange the sequence $\{a_{k}\}_{k=1}^{\infty}$ such that
$|a_{1}|\leq |a_{2}|\leq\cdots\to1^{-}$.
For any $\rho_{0}\in(0,1)$ and $\xi\in\mathbb{T}$, we know that
\[
\Gamma(\xi)\backslash\rho_{0}\mathbb{D}\subseteq\bigcup_{\{a_{k}:D(a_{k},\tilde{\delta})\cap(\Gamma(\xi)\backslash\rho_{0}\mathbb{D})\neq\emptyset\}}
D(a_{k},\tilde{\delta}).
\]
Let $\lambda=3e^{2\delta}-2$. By Lemma \ref{o-g},
for any $\xi\in\mathbb{T}$, we have
\[
D(\zeta,\tilde{\delta})\subseteq \Gamma_{\lambda}(\xi),\quad \textmd{for~any}~\zeta\in \Gamma(\xi).
\]
Hence, for any $\zeta\in D(a_{k},\tilde{\delta})\cap(\Gamma(\xi)\backslash\rho_{0}\mathbb{D})$, there exists $\rho_{1}\in(0,\rho_{0}]$ such that
\begin{equation}\label{b-t-eq8}
\Gamma(\xi)\backslash\rho_{0}\mathbb{D}\subseteq \bigcup_{a_{k}\in\Gamma_{\lambda}(\xi)\backslash \rho_{1}\mathbb{D}}D(a_{k},\tilde{\delta})
\end{equation}
and $\rho_{1}$ tends to 1 as $\rho_{0}$ tends to $1$.
Recall \cite[Proposition 4.5]{zhu2007}, for any $z\in D(a_{k},\tilde{\delta})$, we know that
\[
\frac{1-\tilde{\delta}}{1+\tilde{\delta}}(1-|z|^{2})\leq 1-|a_{k}|^{2}\leq \frac{1+\tilde{\delta}}{1-\tilde{\delta}}(1-|z|^{2})
\]
and
\begin{equation}\label{b-t-eq11}
A(D(a_{k},\tilde{\delta}))=\frac{\tilde{\delta}^{2}(1-|a_{k}|^{2})^{2}}{(1-|a_{k}|^{2}\tilde{\delta}^{2})^{2}}.
\end{equation}
Recall (see \cite{PRS}) there exist positive constants $\varrho_{1}$ and $\varrho_{2}$ such that
\begin{equation}\label{b-t-eq12}
\big(\frac{1-t_{1}}{1-t_{2}}\big)^{\varrho_{1}}\widehat{\omega}(t_{2})
\lesssim\widehat{\omega}(t_{1})\lesssim\big(\frac{1-t_{1}}{1-t_{2}}\big)^{\varrho_{2}}\widehat{\omega}(t_{2}), \quad0\leq t_{1}\leq t_{2}<1.
\end{equation}
By the subharmonicity of $|g'|$, for any $z\in D(a_{k},\tilde{\delta})$, we have
\begin{align}\label{b-t-eq13}
|g'(z)|
&\leq\frac{1}{\tilde{\delta}^{2}(1-|z|)^{2}}\int_{B(z,\tilde{\delta}(1-|z|))}|g'(u)|dA(u)\nonumber\\
&\leq\frac{1}{\tilde{\delta}^{2}(1-|z|)^{2}}\int_{D(z,\tilde{\delta})}|g'(u)|dA(u)\nonumber\\
&\lesssim\frac{1}{\tilde{\delta}^{2}(1-|a_{k}|)^{2}}\int_{D(a_{k},2\tilde{\delta})}|g'(u)|dA(u).
\end{align}
Combining \eqref{b-t-eq8}, \eqref{b-t-eq11}, \eqref{b-t-eq12} with \eqref{b-t-eq13}, we obtain
\begin{align*}\label{eq38}
\sup\limits_{z\in\Gamma(\xi)\backslash\rho_{0}\mathbb{D}}\frac{(1-|z|^{2})}{\widehat{\omega}(z)^{\frac{1}{p}}}|g'(z)|
&\lesssim\sup\limits_{a_{k}\in\Gamma_{\lambda}(\xi)\backslash \rho_{1}\mathbb{D}}
\frac{1}{\tilde{\delta}^{2}(1-|a_{k}|^{2})\widehat{\omega}(a_{k})^{\frac{1}{p}}}\int_{D(a_{k},2\tilde{\delta})}|g'(u)|dA(u)\nonumber\\
&\lesssim\sup\limits_{a_{k}\in\Gamma_{\lambda}(\xi)\backslash \rho_{1}\mathbb{D}}
\frac{(1-|a_{k}|^{2})}{\widehat{\omega}(a_{k})^{\frac{1}{p}}}\bigg(\sup\limits_{u\in D(a_{k},2\tilde{\delta})}|g'(u)|\bigg).
\end{align*}
Notice that the constants of comparison in the above inequality do not depend on $\tilde{\delta}, \rho_{0}$ and $\rho_{1}$.
Then we deduce there exists a constant $C>0$ ($C$ does not depend on the choices of $\tilde{\delta}, \rho_{0}$ and $\rho_{1}$) such that
\begin{align*}
\bigg(\sup\limits_{z\in\Gamma(\xi)\backslash\rho_{0}\mathbb{D}}\frac{(1-|z|^{2})}{\widehat{\omega}(z)^{\frac{1}{p}}}|g'(z)|\bigg)^{\frac{pq}{p-q}}
&\leq C\bigg[\sup\limits_{a_{k}\in\Gamma_{\lambda}(\xi)\backslash \rho_{1}\mathbb{D}}
\frac{(1-|a_{k}|^{2})}{\widehat{\omega}(a_{k})^{\frac{1}{p}}}\bigg(\sup\limits_{u\in D(a_{k},2\tilde{\delta})}|g'(u)|\bigg)\bigg]^{\frac{pq}{p-q}}.
\end{align*}
Thus, by integrating over $\mathbb{T}$ with respect to $\xi$, we conclude that
\begin{equation}\label{b-t-eq14}
\int_{\mathbb{T}}\bigg(\sup\limits_{z\in\Gamma(\xi)\backslash\rho_{0}\mathbb{D}}\frac{(1-|z|^{2})}
{\widehat{\omega}(z)^{\frac{1}{p}}}|g'(z)|\bigg)^{\frac{pq}{p-q}}d\xi
\leq C\int_{\mathbb{T}}\bigg[\sup\limits_{a_{k}\in\Gamma_{\lambda}(\xi)\backslash \rho_{1}\mathbb{D}}
\frac{(1-|a_{k}|^{2})}{\widehat{\omega}(a_{k})^{\frac{1}{p}}}
\bigg(\sup\limits_{u\in D(a_{k},2\tilde{\delta})}|g'(u)|\bigg)\bigg]^{\frac{pq}{p-q}}d\xi.
\end{equation}

\textbf{Step 2.} We will prove that
\begin{equation}\label{eq27}
\lim\limits_{r\to1^{-}}\int_{\mathbb{T}}
\bigg[\sup\limits_{a_{k}\in\Gamma(\xi)\backslash r\mathbb{D}}\frac{(1-|a_{k}|^{2})}{\widehat{\omega}(a_{k})^{\frac{1}{p}}}
\bigg(\sup\limits_{u\in D(a_{k},2\tilde{\delta})}|g'(u)|\bigg)\bigg]^{\frac{pq}{p-q}}d\xi=0.
\end{equation}

Note that for the any $\varepsilon>0$, there exists $R_{2}\in(0,1)$ such that for any $R_{3}\in(R_{2},1)$, it holds that
\begin{equation}\label{b-t-eq3}
\widehat{\omega}(R_{3})(1-R_{3})<\varepsilon.
\end{equation}
Moreover there exists $k_{1}\in\mathbb{N}$ such that for any $k>k_{1}$, it holds that $|a_{k}|\in(R_{2},1)$. Combining with \eqref{b-t-eq3}, we have
\begin{equation}\label{b-t-eq4}
\widehat{\omega}(a_{k})(1-|a_{k}|)<\varepsilon,\quad\textmd{for~any}~k>k_{1}.
\end{equation}
Since every $z\in\mathbb{D}$ belongs to at most $N$ ($N$ is a constant only depends on $\delta$) $D(a_{k},\tilde{\delta})$, then we get
\[
\sum_{k=1}^{\infty}\kappa(D(a_{k},\tilde{\delta}))\leq N\kappa(\mathbb{D}),
\]
where
\[
\kappa(z)=\frac{\widehat{\omega}(z)}{1-|z|},\quad z\in\mathbb{D}.
\]
Hence, there exists $k_{2}\in\mathbb{N}$ such that for any $k_{3}>k_{2}$,
\begin{equation}\label{b-t-eq5}
\sum_{k=k_{3}}^{\infty}\kappa(D(a_{k},\tilde{\delta}))<\varepsilon.
\end{equation}
Let $r_{k}$ be the Rademacher functions.
For $k\in\mathbb{N}$ and $p\in(0,2]$, consider the functions
\[
f_{k,p}(z)=\frac{K_{a_{k}}^{\omega}(z)^{\frac{1}{q}}}{\|K_{a_{k}}^{\omega}\|_{L_{a}^{\frac{p}{q}}(\omega)}^{\frac{1}{q}}}\chi_{\{p:0<p\leq1\}}
+\frac{K_{a_{k}}^{\omega}(z)}{\|K_{a_{k}}^{\omega}\|_{L_{a}^{2-\frac{1}{p}}(\omega)}^{2-\frac{1}{p}}}\chi_{\{p:1<p\leq2\}},\quad z\in\mathbb{D}.
\]
For $t\in(0,1)$, $k_{4}\in\mathbb{N}$ and $\{y_{k}\}_{k=1}^{\infty}\in T_{p}^{p}(\{a_{k}\})$, choose test functions
\[
F_{t,p,k_{4},\{y_{k}\}}(z)=\frac{\sum\limits_{k=k_{4}}^{\infty}(1-|a_{k}|^{2})^{\frac{1}{p}}y_{k}r_{k}(t)f_{k,p}(z)}{\|\{y_{k}\}\|_{T_{p}^{p}(\{a_{k}\})}},\quad z\in\mathbb{D}.
\]
Combining Lemma \ref{regular-lemma}, \eqref{reproducing-formula}, \eqref{eq78}, \eqref{h-p}, \eqref{b-t-eq4} with \eqref{b-t-eq5}, for any
$z\in \overline{D(0,\varrho)}$ with $\varrho\in(0,1)$, for any $k_{4}>\max\{k_{1},k_{2}\}$, we deduce
\begin{align*}
\big|F_{t,p,k_{4},\{y_{k}\}}(z)\big|\leq&
\frac{1}{\|\{y_{k}\}\|_{T_{p}^{p}(\{a_{k}\})}}
\bigg(\sum_{k=k_{4}}^{\infty}(1-|a_{k}|^{2})|y_{k}|^{p}
\frac{|K_{a_{k}}^{\omega}(z)|^{\frac{p}{q}}}{\|K_{a_{k}}^{\omega}\|_{L_{a}^{\frac{p}{q}}(\omega)}^{\frac{p}{q}}}\bigg)^{\frac{1}{p}}\nonumber\\
\lesssim& \frac{\sup\limits_{k\geq k_{4}}|K_{a_{k}}^{\omega}(z)|^{\frac{1}{q}}}{\|\{y_{k}\}\|_{T_{p}^{p}(\{a_{k}\})}}
\bigg[\sum_{k=k_{4}}^{\infty}(1-|a_{k}|^{2})|y_{k}|^{p}\bigg(\widehat{\omega}(a_{k})(1-|a_{k}|)\bigg)^{\frac{p}{q}-1}\bigg]^{\frac{1}{p}}\nonumber\\
\leq & \sup\limits_{k\geq k_{4}}|K_{a_{k}}^{\omega}(z)|^{\frac{1}{q}}
\sup\limits_{k\geq k_{4}}\big(\widehat{\omega}(a_{k})(1-|a_{k}|)\big)^{\frac{p-q}{pq}}
\frac{\bigg(\sum\limits_{k=k_{4}}^{\infty}(1-|a_{k}|^{2})|y_{k}|^{p}\bigg)^{\frac{1}{p}}}{\|\{y_{k}\}\|_{T_{p}^{p}(\{a_{k}\})}}\nonumber\\
\lesssim&\varepsilon^{\frac{p-q}{pq}}\sup\limits_{k\geq k_{4}}|K_{a_{k}}^{\omega}(z)|^{\frac{1}{q}}, \quad p\in(0,1]
\end{align*}
and
\begin{align*}
\big|F_{t,p,k_{4},\{y_{k}\}}(z)\big|\leq&
\frac{1}{\|\{y_{k}\}\|_{T_{p}^{p}(\{a_{k}\})}}
\bigg(\sum_{k=k_{4}}^{\infty}(1-|a_{k}|^{2})|y_{k}|^{p}\bigg)^{\frac{1}{p}}
\bigg[\sum_{k=k_{4}}^{\infty}\bigg(\frac{|K_{a_{k}}^{\omega}(z)|}
{\|K_{a_{k}}^{\omega}\|_{L_{a}^{2-\frac{1}{p}}(\omega)}^{^{2-\frac{1}{p}}}}\bigg)^{\frac{p}{p-1}}\bigg]^{\frac{p-1}{p}}\nonumber\\
\lesssim& \sup\limits_{k\geq k_{4}}|K_{a_{k}}^{\omega}(z)|
\bigg(\sum_{k=k_{4}}^{\infty}\widehat{\omega}(a_{k})(1-|a_{k}|)\bigg)^{\frac{p-1}{p}}\nonumber\\
\asymp&\sup\limits_{k\geq k_{4}}|K_{a_{k}}^{\omega}(z)|
\bigg(\sum_{k=k_{4}}^{\infty}\kappa(D(a_{k},\tilde{\delta}))\bigg)^{\frac{p-1}{p}}\nonumber\\
\leq&\varepsilon^{\frac{p-1}{p}}\sup\limits_{k\geq k_{4}}|K_{a_{k}}^{\omega}(z)|, \quad p\in(1,2],
\end{align*}
which yields that for $p\in(0,2]$, $F_{t,p,k_{4},\{y_{k}\}}$ uniformly converges to 0 on any compact subset of $\mathbb{D}$ as $k_{4}\to\infty$.
Furthermore, by using Proposition \ref{le5}, \eqref{eq78} and \eqref{h-p}, for any $k_{4}\in\mathbb{N}$, we obtain
\[
\|F_{t,p,k_{4},\{y_{k}\}}\|_{L_{a}^{p}(\omega)}^{p}
\leq\frac{1}{\|\{y_{k}\}\|_{T_{p}^{p}(\{a_{k}\})}^{p}}
\int_{\mathbb{D}}\sum_{k=k_{4}}^{\infty}(1-|a_{k}|^{2})|y_{k}|^{p}\frac{|K_{a_{k}}^{\omega}(z)|^{\frac{p}{q}}}
{\|K_{a_{k}}^{\omega}\|_{L_{a}^{\frac{p}{q}}(\omega)}^{\frac{p}{q}}}\omega(z)dA(z)
\lesssim1,\quad p\in(0,1]
\]
and
\[
\|F_{t,p,k_{4},\{y_{k}\}}\|_{L_{a}^{p}(\omega)}^{p}
\lesssim\frac{\sum\limits_{k=k_{4}}^{\infty}(1-|a_{k}|^{2})|y_{k}|^{p}}{\|\{y_{k}\}\|_{T_{p}^{p}(\{a_{k}\})}^{p}}
\lesssim1,\quad p\in(1,2].
\]
Thus, we know that for $p\in(0,2]$, $\{F_{t,p,k_{4},\{y_{k}\}}\}$ is a sequence in $L_{a}^{p}(\omega)$ with
\[
\sup\limits_{k_{4}\in\mathbb{N}}\|F_{t,p,k_{4},\{y_{k}\}}\|_{L_{a}^{p}(\omega)}\lesssim1.
\]
It follows from the compactness of $J_{g}$ that
\[
\lim\limits_{k_{4}\to \infty} \|J_{g}(F_{t,p,k_{4},\{y_{k}\}})\|_{H^{q}}^{q}=0.
\]
Then there exists $k_{5}\in\mathbb{N}$ such that for any $k_{4}\in[k_{5},\infty)$, it holds that
\begin{equation}\label{b-t-eq6}
\|J_{g}(F_{t,p,k_{4},\{y_{k}\}})\|_{H^{q}}^{q}<\varepsilon.
\end{equation}
Note that $k_{5}$ does not depend on the choice of $\{y_{k}\}_{k=1}^{\infty}$ in $T_{p}^{p}(\{a_{k}\})$.
Combine with Lemma \ref{le1}, we obtain
\[
\int_{\mathbb{T}}\bigg(\int_{\Gamma(\xi)}|g'(z)|^{2}
\bigg|\sum_{k=k_{4}}^{\infty}(1-|a_{k}|^{2})^{\frac{1}{p}}y_{k}r_{k}(t)f_{k,p}(z)\bigg|^{2}dA(z)\bigg)^{\frac{q}{2}}d\xi
\asymp\|J_{g}(F_{t,p,k_{4},\{y_{k}\}})\|_{H^{q}}^{q}\|\{y_{k}\}\|_{T_{p}^{p}(\{a_{k}\})}^{q}.
\]
Integrate both sides of the inequality above from 0 to 1 with respect to $t$, by Fubini's theorem and \eqref{b-t-eq6}, for any $k_{4}\in[k_{5},\infty)$, we have
\begin{equation}\label{eq32}
\int_{\mathbb{T}}\int_{0}^{1}\bigg(\int_{\Gamma(\xi)}|g'(z)|^{2}
\bigg|\sum_{k=k_{4}}^{\infty}(1-|a_{k}|^{2})^{\frac{1}{p}}y_{k}r_{k}(t)f_{k,p}(z)\bigg|^{2}dA(z)\bigg)^{\frac{q}{2}}dtd\xi
\lesssim\varepsilon\|\{y_{k}\}\|_{T_{p}^{p}(\{a_{k}\})}^{q}.
\end{equation}
Notice that there exists $R_{4}\in(0,1)$ such that for all $R_{5}\in[R_{4},1)$, we have
\[
k\geq k_{5},\quad \textmd{for~any}~a_{k}\in\mathbb{D}\backslash R_{5}\mathbb{D}.
\]
Take a sequence $\{b_{k}\}_{k=1}^{\infty}$, where
\[
b_{k}=\frac{(1-|a_{k}|^{2})^{q}}{\widehat{\omega}(a_{k})^{\frac{q}{p}}}
\bigg(\sup\limits_{u\in D(a_{k},2\tilde{\delta})}|g'(u)|\bigg)^{q}\chi_{\{a_{k}:R_{4}\leq|a_{k}|<1\}}.
\]
Choose $s>1$. Let $\iota=\frac{2ps}{2ps-pq+2q}$. Then $\iota\leq1$.
Due to Lemma \ref{le2}, the dual space of $$T_{\iota}^{\big(\frac{ps}{p-q}\big)'}(\{a_{k}\})$$ can be identified with $$T_{\infty}^{\frac{ps}{p-q}}(\{a_{k}\}).$$
Take $$\{e_{k}\}_{k=1}^{\infty}\in T_{\iota}^{\big(\frac{ps}{p-q}\big)'}(\{a_{k}\}).$$
The factorization in Lemma \ref{le4} shows that there exist sequences $$\{x_{k}\}_{k=1}^{\infty}\in T_{\frac{2s}{2s-q}}^{s'}(\{a_{k}\}) \text{ and } \{y_{k}\}_{k=1}^{\infty}\in T_{p}^{p}(\{a_{k}\})$$ such that
$
e_{k}=x_{k}y_{k}^{\frac{q}{s}}.
$
Using a similar estimation as in \eqref{eq13}, we get
\begin{equation}\label{eq76}
|\langle e_{k},b_{k}^{\frac{1}{s}}\rangle_{T_{2}^{2}(\{a_{k}\})}|
\lesssim \|\{x_{k}\}\|_{T_{\frac{2s}{2s-q}}^{s'}(\{a_{k}\})}
\bigg[\int_{\mathbb{T}}\bigg(\sum\limits_{a_{k}\in\Gamma(\xi)\backslash R_{4}\mathbb{D}}
|y_{k}^{2}b_{k}^{\frac{2}{q}}|\bigg)^{\frac{q}{2}}d\xi\bigg]^{\frac{1}{s}}.
\end{equation}
Take $\tau>\max\{1,\frac{2}{q}\}$. By the subharmonicity of $|g'^{2}|$, \cite[Lemma 8]{PRS}, Lemma \ref{D-regular} and \eqref{reproducing-formula}, we deduce for $\delta$ small enough,
\begin{align*}
\sum\limits_{a_{k}\in\Gamma(\xi)\backslash R_{4}\mathbb{D}}|y_{k}^{2}b_{k}^{\frac{2}{q}}|
\asymp&\sum\limits_{a_{k}\in\Gamma(\xi)\backslash R_{4}\mathbb{D}}|y_{k}|^{2}
\bigg(\sup\limits_{u\in D(a_{k},2\tilde{\delta})}|g'(u)|\bigg)^{2}\bigg(\frac{1-|a_{k}|^{2}}{|1-\bar{a_{k}}\xi|}\bigg)^{\tau}\frac{(1-|a_{k}|^{2})^{2}}
{\widehat{\omega}(a_{k})^{\frac{2}{p}}}\nonumber\\
\lesssim&\sum\limits_{k=k_{5}}^{\infty}|y_{k}|^{2}
\frac{1}{\widehat{\omega}(a_{k})(1-|a_{k}|)}\int_{D(a_{k},3\tilde{\delta})}|g'(z)|^{2}\frac{\widehat{\omega}(z)}{1-|z|}
\bigg(\frac{1-|z|^{2}}{|1-\bar{z}\xi|}\bigg)^{\tau}
\frac{(1-|z|^{2})^{2}}{\widehat{\omega}(z)^{\frac{2}{p}}}dA(z)\nonumber\\
\asymp&\sum\limits_{k=k_{5}}^{\infty}|y_{k}|^{2}(1-|a_{k}|^{2})^{\frac{2}{p}}
\int_{D(a_{k},3\tilde{\delta})}|g'(z)|^{2}\bigg(\frac{1-|z|^{2}}{|1-\bar{z}\xi|}\bigg)^{\tau}|f_{k,p}(z)|^{2}dA(z)\nonumber\\
\leq&\int_{\mathbb{D}}\bigg(\frac{1-|z|^{2}}{|1-\bar{z}\xi|}\bigg)^{\tau}
\sum\limits_{k=k_{5}}^{\infty}|y_{k}|^{2}(1-|a_{k}|^{2})^{\frac{2}{p}}|g'(z)|^{2}|f_{k,p}(z)|^{2}dA(z).
\end{align*}
Together with Lemma \ref{le3}, we know
\begin{align}\label{r-8}
\int_{\mathbb{T}}\bigg(\sum\limits_{a_{k}\in\Gamma(\xi)\backslash R_{4}\mathbb{D}}|y_{k}^{2}b_{k}^{\frac{2}{q}}|\bigg)^{\frac{q}{2}}d\xi
&\lesssim\int_{\mathbb{T}}\bigg(\int_{\mathbb{D}}\bigg(\frac{1-|z|^{2}}{|1-\bar{z}\xi|}\bigg)^{\tau}
\sum\limits_{k=k_{5}}^{\infty}|y_{k}|^{2}(1-|a_{k}|^{2})^{\frac{2}{p}}|g'(z)|^{2}|f_{k,p}(z)|^{2}dA(z)\bigg)^{\frac{q}{2}}d\xi\nonumber\\
&\asymp\int_{\mathbb{T}}\bigg(\int_{\Gamma(\xi)}\sum\limits_{k=k_{5}}^{\infty}|y_{k}|^{2}(1-|a_{k}|^{2})
^{\frac{2}{p}}|f_{k,p}(z)|^{2}|g'(z)|^{2}dA(z)\bigg)^{\frac{q}{2}}d\xi.
\end{align}
Moreover, according to Lemma \ref{le-Khinchine}, Lemma \ref{le-Kahane} and Fubini's theorem, we obtain
\begin{eqnarray*}\label{eq9}\begin{split}
&~~~~\int_{\mathbb{T}}\bigg(\int_{\Gamma(\xi)}\sum\limits_{k=k_{5}}^{\infty}|y_{k}|^{2}(1-|a_{k}|^{2})
^{\frac{2}{p}}|f_{k,p}(z)|^{2}|g'(z)|^{2}dA(z)\bigg)^{\frac{q}{2}}d\xi\\&
\asymp\int_{\mathbb{T}}\bigg(\int_{\Gamma(\xi)}|g'(z)|^{2}\int_{0}^{1}\bigg|\sum\limits_{k=k_{5}}^{\infty}y_{k}r_{k}(t)
(1-|a_{k}|^{2})^{\frac{1}{p}}f_{k,p}(z)\bigg|^{2}dtdA(z)\bigg)^{\frac{q}{2}}d\xi\\&
=\int_{\mathbb{T}}\bigg(\int_{0}^{1}\int_{\Gamma(\xi)}\bigg|g'(z)\sum\limits_{k=k_{5}}^{\infty}y_{k}r_{k}(t)
(1-|a_{k}|^{2})^{\frac{1}{p}}f_{k,p}(z)\bigg|^{2}dA(z)dt\bigg)^{\frac{q}{2}}d\xi\\&
\asymp\int_{\mathbb{T}}\int_{0}^{1}\bigg(\int_{\Gamma(\xi)}\bigg|g'(z)\sum\limits_{k=k_{5}}^{\infty}y_{k}r_{k}(t)
(1-|a_{k}|^{2})^{\frac{1}{p}}f_{k,p}(z)\bigg|^{2}dA(z)\bigg)^{\frac{q}{2}}dtd\xi.
\end{split}\end{eqnarray*}
Together with \eqref{eq32} and \eqref{r-8}, we get
\begin{equation}\label{eq28}
\int_{\mathbb{T}}\bigg(\sum\limits_{a_{k}\in\Gamma(\xi)\backslash R_{4}\mathbb{D}}|y_{k}^{2}b_{k}^{\frac{2}{q}}|\bigg)^{\frac{q}{2}}d\xi
\lesssim\varepsilon\|\{y_{k}\}\|_{T_{p}^{p}(\{a_{k}\})}^{q}.
\end{equation}
According to \eqref{eq76} and \eqref{eq28}, it holds that
\[
|\langle e_{k},b_{k}^{\frac{1}{s}}\rangle_{T_{2}^{2}(\{a_{k}\})}|
\lesssim\|\{x_{k}\}\|_{T_{\frac{2s}{2s-q}}^{s'}(\{a_{k}\})}
\varepsilon^{\frac{1}{s}}\|\{y_{k}\}\|_{T_{p}^{p}(\{a_{k}\})}^{\frac{q}{s}}.
\]
By taking infimum over all possible factorizations of $\{e_{k}\}$, we deduce
\[
|\langle e_{k},b_{k}^{\frac{1}{s}}\rangle_{T_{2}^{2}(\{a_{k}\})}|\lesssim\
\varepsilon^{\frac{1}{s}}\|\{e_{k}\}\|_{T_{\iota}^{\big(\frac{ps}{p-q}\big)'}(\{a_{k}\})}.
\]
Therefore, using Lemma \ref{le2}, we know that for any $\rho_{1}\geq R_{4}$,
\begin{equation}\label{b-t-eq17}
\int_{\mathbb{T}}\bigg(\sup\limits_{a_{k}\in\Gamma_{\lambda}(\xi)\backslash\rho_{1}\mathbb{D}}\frac{(1-|a_{k}|^{2})}
{\widehat{\omega}(a_{k})^{\frac{1}{p}}}\bigg(\sup\limits_{u\in D(a_{k},2\tilde{\delta})}|g'(u)|\bigg)\bigg)^{\frac{pq}{p-q}}d\xi\lesssim \varepsilon^{\frac{p}{p-q}}.
\end{equation}
Hence, \eqref{eq27} holds.

Recall in Step 1, we can choose $\rho_{1}\in(0,\rho_{0}]$ such that $\rho_{1}$ tends to 1 as $\rho_{0}$ tends to 1.
Then we can choose $\rho_{0}\in(0,1)$ such that $\rho_{1}\geq R_{4}$.
Combining \eqref{b-t-eq14} with \eqref{b-t-eq17}, we deduce
\[
\int_{\mathbb{T}}\bigg(\sup\limits_{z\in\Gamma(\xi)\backslash\rho_{0}\mathbb{D}}\frac{(1-|z|^{2})}
{\widehat{\omega}(z)^{\frac{1}{p}}}|g'(z)|\bigg)^{\frac{pq}{p-q}}d\xi
\lesssim \varepsilon^{\frac{p}{p-q}}.
\]
Thus, we complete the whole proof of the necessity part.
\end{proof}
Combining Proposition \ref{t-c-1}, Proposition \ref{t-c-2}, Proposition \ref{t-c-3} with Proposition \ref{t4}, we get Theorem \ref{c-t1}. Moreover, one can get Theorem \ref{b-t1} by modifying the corresponding proofs of the above four propositions slightly.
\section{From Hardy space into Bergman space}
In this section, we are devoted to proving Theorem \ref{b-t2}.
To begin with, we state a result (cf. \cite[Theorem 5]{PR19}) which is vital to the proof of our main result in this section. It describes that the class of $\mathcal{D}$ weights is the largest class of radial weights such that the Littlewood-Paley estimate holds.
\begin{lemma}\label{r1} Let $0<p<\infty$ and $\omega$ be a radial weight. Then
\[
\|f\|_{L_{a}^{p}(\omega)}^{p}\asymp \int_{\mathbb{D}}|f'(z)|^{p}(1-|z|^{2})^{p}\omega(z)dA(z)
+|f(0)|^{p},\quad f\in H(\mathbb{D})
\]
holds if and only if $\omega\in\mathcal{D}$.
\end{lemma}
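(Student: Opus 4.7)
The plan is to prove both implications of the equivalence, with the forward direction ($\omega\in\mathcal{D}\Rightarrow$ Littlewood--Paley norm equivalence) and the reverse direction (norm equivalence $\Rightarrow \omega\in\mathcal{D}$) handled separately.

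For the sufficiency $(\Leftarrow)$, assume $\omega\in\mathcal{D}$ and split the equivalence into two inequalities. The bound $\int_{\mathbb{D}}|f'(z)|^p(1-|z|^2)^p\omega(z)\,dA(z)+|f(0)|^p\lesssim\|f\|_{L^p(\omega)}^p$ is the easier direction: I would apply Cauchy's formula on a pseudohyperbolic disk $D(z,r)$ together with the subharmonicity of $|f|^p$ to get the pointwise estimate $|f'(z)|^p(1-|z|)^p\lesssim (1-|z|)^{-2}\int_{D(z,r)}|f(u)|^p\,dA(u)$. Multiplying by $\omega(z)$, integrating, and using Fubini together with Lemma \ref{D-regular} (which ensures $\widehat{\omega}$, and hence its regular companion $\kappa$ from Lemma \ref{prop-reverse}, is essentially constant on pseudohyperbolic disks), then yields the bound. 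The reverse inequality $\|f\|_{L^p(\omega)}^p\lesssim\int_{\mathbb{D}}|f'(z)|^p(1-|z|^2)^p\omega(z)\,dA(z)+|f(0)|^p$ is where the doubling properties enter essentially. When $p=2$, the Taylor expansion $f(z)=\sum_{n\geq 0}a_n z^n$ combined with the radiality of $\omega$ reduces both sides to weighted sums, and the question becomes the moment comparison $\omega_{2n+1}\asymp (n+1)^2\int_0^1 r^{2n-1}(1-r^2)^2\omega(r)\,dr$, which follows directly from the forward and reverse doubling of $\widehat{\omega}$ after integration by parts. For general $p$, I would apply the Khintchine and Kahane inequalities (Lemmas \ref{le-Khinchine} and \ref{le-Kahane}) to the randomized sum $\sum_n a_n r_n(t)z^n$, reducing matters to a square-function estimate of $L^2$ type.

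For the necessity $(\Rightarrow)$, I would test the assumed equivalence on the family of monomials $f_n(z)=z^n$. Since $f_n(0)=0$, the equivalence forces the moment asymptotic
\[
\int_0^1 r^{np+1}\omega(r)\,dr\asymp n^p\int_0^1 r^{(n-1)p+1}(1-r^2)^p\omega(r)\,dr,
\]
uniformly in $n$. Because $r^{np}$ concentrates near $r=1-1/n$ and the right-hand side carries the extra vanishing factor $(1-r^2)^p$, one can convert this moment asymptotic directly into the constants governing $\mathcal{D}$: the upper comparison, after a change of variable and splitting of the integral at $r=(1+s)/2$, yields the forward doubling $\widehat\omega(r)\leq C_3\widehat\omega((1+r)/2)$, while the lower comparison (obstructing $\widehat\omega$ from decaying too fast) forces the reverse doubling $C_1\widehat\omega(1-(1-r)/C_2)\leq\widehat\omega(r)$.

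The main obstacle I expect is the lower bound in the sufficiency direction when $p\neq 2$. The Khintchine--Kahane randomization gives a clean $L^2$ handle on the radial parts of $|f(re^{i\theta})|^p$, but passing back to the weighted integral on $\mathbb{D}$ requires patching estimates across the dyadic annuli $\{1-2^{-k}\leq|z|<1-2^{-k-1}\}$ and invoking both forward and reverse doubling to move freely between $\omega(z)$, $\widehat\omega(z)/(1-|z|)$, and tail quantities at scale $1-|z|$. Keeping the implied constants truly independent of $f$ through this dyadic patching --- and ensuring the two sides of the randomized comparison collapse back to the original deterministic quantity --- is the most delicate point of the whole argument.
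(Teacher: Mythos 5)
This statement is not proved in the paper at all: it is quoted verbatim from Pel\'aez--R\"atty\"a \cite[Theorem 5]{PR19}, so there is no internal argument to compare yours against; I am judging the proposal on its own terms. The decisive problem is the step you yourself flag as ``the most delicate point'': the Khintchine--Kahane reduction of the hard inequality $\|f\|_{L_a^{p}(\omega)}^{p}\lesssim |f(0)|^{p}+\int_{\mathbb{D}}|f'|^{p}(1-|z|^{2})^{p}\omega\,dA$ to the $p=2$ moment comparison. Randomizing the Taylor coefficients and applying Lemma \ref{le-Khinchine} only controls the \emph{average} $\int_{0}^{1}\big\|\sum_{n}a_{n}r_{n}(t)z^{n}\big\|_{L^{p}(\omega)}^{p}\,dt\asymp\int_{\mathbb{D}}\big(\sum_{n}|a_{n}|^{2}|z|^{2n}\big)^{p/2}\omega\,dA$; for $p\neq 2$ this quantity is not comparable to $\|f\|_{L^{p}(\omega)}^{p}$ for the individual $f$ you started with (lacunary series versus outer functions with the same coefficient moduli already show the two can diverge in either direction). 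So there is nothing to ``collapse back'': the reduction is a dead end, not a technicality. Khintchine--Kahane is used in this paper (Propositions \ref{vanishing Carleson}, \ref{t-c-3}, \ref{t4}) only where one is free to quantify over all sign choices of coefficients in an atomic decomposition --- a structurally different situation. The known proofs of this Littlewood--Paley equivalence instead work with integral means: $\|f\|_{L^{p}(\omega)}^{p}=2\int_{0}^{1}M_{p}^{p}(r,f)\,\omega(r)r\,dr$, the elementary bound $M_{p}(r,f)\le|f(0)|+\int_{0}^{r}M_{p}(s,f')\,ds$, and a one-dimensional weighted Hardy-type inequality for increasing functions whose validity is exactly where $\widehat{\mathcal{D}}$ and the reverse doubling enter (alternatively, one first replaces $\omega$ by its regular companion $\kappa=\widehat{\omega}/(1-|\cdot|)$ via Lemma \ref{prop-reverse} and treats regular weights). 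Your easy direction (subharmonicity, Fubini, $\omega(D(u,r))\lesssim(1-|u|)\widehat{\omega}(u)$, Lemma \ref{prop-reverse}) is fine.

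The necessity half is also thinner than you suggest. Testing on $z^{n}$ and demanding $n^{p}\int_{0}^{1}r^{(n-1)p+1}(1-r)^{p}\omega\,dr\lesssim\omega_{np+1}$ does yield $\widehat{\mathcal{D}}$ by a clean splitting at $1-K/n$. But the opposite inequality does not convert ``directly'' into reverse doubling: after integration by parts the right-hand side becomes $\widehat{\omega}(1-1/n)$ \emph{minus} a weighted average of $\widehat{\omega}$ over $[1-1/n,1]$ plus a sum of consecutive differences $\widehat{\omega}(1-2^{j+1}/n)-\widehat{\omega}(1-2^{j}/n)$, and one must show that this combination being comparable to $\widehat{\omega}(1-1/n)$ for every $n$ excludes all weights with arbitrarily long flat stretches of $\widehat{\omega}$. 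That is a genuine lemma, and it is not obvious that monomials alone suffice; the literature also uses kernel-type test functions $(1-\bar{a}z)^{-\gamma}$ (as in the necessity arguments of Propositions \ref{t-c-1} and \ref{t-c-2} of this paper), for which the resulting condition $\int_{|a|}^{1}(1-s)^{p-1}\widehat{\omega}(s)\,ds\le(1-\delta)\,p^{-1}(1-|a|)^{p}\widehat{\omega}(a)$ is more visibly equivalent to $\check{\mathcal{D}}$. In short: the skeleton is reasonable, but the central lower bound for general $p$ needs an entirely different mechanism, and the necessity of reverse doubling needs a real argument rather than an appeal to directness.
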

\begin{proof}[Proof of Theorem \ref{b-t2}] Let
\[
d\mu_{g}(z)=|g'(z)|^{q}(1-|z|^{2})^{q}\omega(z)dA(z).
\]
It follows from Lemma \ref{r1} that
\begin{equation}\label{eq52}
\|J_{g}(f)\|_{L_{a}^{q}(\omega)}^{q}\asymp\int_{\mathbb{D}}|f(z)|^{q}d\mu_{g}(z).
\end{equation}
By \eqref{eq52}, the boundedness (compactness) of $J_{g}:H^{p}\to L^{q}_{a}(\omega)$ is equivalent to that of $Id: H^{p}\to L^{q}(\mu_{g})$. For the case when $0<p\leq q<\infty$, the identity operator $Id: H^{p}\to L^{q}(\mu_{g})$ is bounded (compact) if and only if $\mu_{g}$ is a $\frac{q}{p}$-Carleson measure (vanishing $\frac{q}{p}$-Carleson measure) (cf. \cite[Section 9.2]{zhu2007}).

Now we are in a position to deal with the case when $0<q<p<\infty$. For $\xi\in\mathbb{T}$, recall
\[
\widetilde{\mu_{g}}(\xi)=\int_{\Gamma(\xi)}\frac{d\mu_{g}(\lambda)}{1-|\lambda|^{2}}.
\]
By \cite[Theorem E]{Pau-2016} and \eqref{eq52}, we know that $J_{g}:H^{p}\to L_{a}^{q}(\omega)$ is bounded if and only if $\widetilde{\mu_{g}}\in L^{\frac{p}{p-q}}(\mathbb{T})$.

It suffices to prove that $J_{g}:H^{p}\to L_{a}^{q}(\omega)$ is compact if $\widetilde{\mu_{g}}\in L^{\frac{p}{p-q}}(\mathbb{T})$.
Let $\{f_{j}\}_{j=1}^{\infty}$ be any sequence in $H^{p}$ with $\|f_{j}\|_{H^{p}}\leq 1$ and $f_{j}$ converges to 0 uniformly on any compact subset of $\mathbb{D}$.
By Lemma \ref{r1}, we obtain
\begin{equation}\label{eq96}
\|J_{g}(f_{j})\|_{L_{a}^{q}(\omega)}^{q}\asymp\int_{\mathbb{D}}|f_{j}(z)|^{q}d\mu_{g}(z).
\end{equation}
Then we only need to show
\begin{equation}\label{eq94}
\lim\limits_{j\to\infty}\int_{\mathbb{D}}|f_{j}(z)|^{q}d\mu_{g}(z)=0.
\end{equation}
Since $\widetilde{\mu_{g}}\in L^{\frac{p}{p-q}}(\mathbb{T})$, then for any $\varepsilon>0$, there exists $r_{0}\in(0,1)$ such that
\begin{equation}\label{eq95}
\bigg[\int_{\mathbb{T}}\bigg(\int_{\Gamma(\xi)\backslash r_{0}\mathbb{D}}
\frac{d\mu_{g}(z)}{1-|z|^{2}}\bigg)^{\frac{p}{p-q}}d\xi\bigg]^{\frac{p-q}{p}}<\varepsilon.
\end{equation}
Moreover, there exists $k_{0}\in\mathbb{N}$ such that for any $j>k_{0}$, $|f_{j}|^{q}<\varepsilon$ on $r_{0}\mathbb{D}$. It follows from
\cite[Theorem 3.1 in Chapter \uppercase\expandafter{\romannumeral2}]{Ga}, $\textmd{H}\ddot{\textmd{o}}\textmd{lder's}$ inequality, Fubini's theorem and \eqref{eq2},  that
\begin{align*}
\int_{\mathbb{D}}|f_{j}(z)|^{q}d\mu_{g}(z)
&\lesssim\varepsilon+\int_{\mathbb{D}\backslash r_{0}\mathbb{D}}\frac{|f_{j}(z)|^{q}}{1-|z|^{2}}\int_{I(z)}d\xi d\mu_{g}(z)\nonumber\\
&\leq\varepsilon+\bigg(\int_{\mathbb{T}}\sup_{z\in \Gamma(\xi)}|f_{j}(z)|^{p}d\xi\bigg)^{\frac{q}{p}}
\bigg[\int_{\mathbb{T}}\bigg(\int_{\Gamma(\xi)\backslash r_{0}\mathbb{D}}
\frac{d\mu_{g}(z)}{1-|z|^{2}}\bigg)^{\frac{p}{p-q}}d\xi\bigg]^{\frac{p-q}{p}}\nonumber\\
&\lesssim\varepsilon+\varepsilon\|f_{j}\|_{H^{p}}^{q}\lesssim\varepsilon.
\end{align*}
Thus, we get the desired conclusion and complete the proof.
\end{proof}

\section*{Acknowledgements}
This work is partially supported by National Natural Science Foundation of China.

\end{document}